\numberwithin{equation}{section}
\theoremstyle{definition}
\theoremstyle{plain}
\newtheorem{thm}{Theorem}[section]
\newtheorem{Prop}[thm]{Proposition}
 \newtheorem{Thm}{Theorem}[section]
 \newtheorem{Rmk}[thm]{Remark}
 \newtheorem{Lem}[thm]{Lemma}
  \newtheorem{Def}[thm]{Definition}
 \def\N {\mathbb{N}}
\def\R {\mathbb{R}}
\def\T {\mathbb{T}}
\def\cA {\mathcal{A}}
\def\cB {\mathcal{B}}
\def\cC {\mathcal{C}}
\def\cD {\mathcal{D}}
\def\cG {\mathcal{G}}
\def\cN {\mathcal{N}}
\def\cZ {\mathcal{Z}}
\def\cR {\mathcal{R}}
\def\cT {\mathcal{T}}
\def\b {{\beta}}
\def\eps {{\varepsilon}}
\def\e {{\varepsilon}}
\def\indc {{\bf 1}}
\def\d {{\partial}}
\newcommand{\ba}{\begin{aligned}}
\newcommand{\ea}{\end{aligned}}
\newcommand{\be}{\begin{equation}}
\newcommand{\ee}{\end{equation}}
\numberwithin{equation}{section}
\begin{document}


\title[One-sided convergence and irreversibility ] 
{One-sided convergence\\ in the Boltzmann-Grad limit}
\author{Thierry Bodineau, Isabelle Gallagher,\\ Laure Saint-Raymond and Sergio Simonella}

\begin{abstract}
We review various contributions   on the fundamental work of Lanford \cite{lanford}
deriving the Boltzmann equation from   hard-sphere dynamics in the low density limit.

We focus especially on the assumptions made on the initial data and on how they encode irreversibility.
The impossibility to reverse time in the Boltzmann  equation
(expressed for instance by Boltzmann's H-theorem) is related to the lack of convergence of higher order marginals on some singular sets.
Explicit counterexamples single out the microscopic sets where the initial data should converge in order to produce the Boltzmann dynamics.

\end{abstract}
\null

\maketitle

\section{Introduction}

\subsection{Goals}

The Boltzmann equation was introduced at the end of the nineteenth century to predict the statistical behavior of a perfect gas out of thermodynamic equilibrium.
This equation expresses the transport and   collisions of microscopic particles (atoms)  which are supposed to interact typically as elastic hard spheres.

However the resulting dynamics exhibits very different features compared to the {\color{black} reversible} deterministic system of hard spheres, which is   a Hamiltonian system.
 The Boltzmann equation  generates indeed a semi-group with a Lyapunov functional (the entropy increases along the evolution), and an attractor as time goes to infinity (the density converges to thermodynamic equilibrium).
 These discrepancies between {\color{black} the microscopic and the macroscopic descriptions} were the starting point of some very violent controversy opposing for instance Boltzmann to Loschmidt \cite{Cercignani,boltzmann,Bo96,loschmidt}. 
 There is still an important challenge in understanding the origin of the {\bf non-reversible} Boltzmann equation and the conditions under which it can provide a good approximation of the microscopic dynamics. 
{\color{black} 
We refer to \cite{Lebowitz Goldstein,Lebowitz 2} for a review on the irreversibility and on the key role played by entropy 
and to \cite{Sp97} for a modern perspective on Loschmidt's argument.
In this paper, we will focus on a more quantitative analysis of the mathematical aspects leading to the emergence of irreversibility.
}

\medskip
 The convergence result describing at best up to now this transition is due to Lanford~\cite{lanford}. It states that the Boltzmann equation can be obtained as the limit of the deterministic dynamics in a box of size 1
 \begin{itemize}
 \item in the low density regime, i.e. as the number of particles $N\to \infty$, their size $\eps \to 0$,   with the additional condition that the inverse mean free path $N\eps^{d-1}$ remains of order~1 (where~$d$ is the space dimension);
 \item up to excluding some pathological situations which occur with vanishing probability in this limit; \item provided that   initially the particles are distributed independently. \end{itemize}
One important restriction is that this convergence result holds only for short times, which 
{is not enough for}
observing any relaxation towards equilibrium. 
Despite many efforts, this restriction  has  not been removed to this day.
There is no attempt in the present paper to improve the convergence time. Our goal here  is to study the appearance of irreversibility which already occurs for short times.

 \bigskip
More precisely, we intend to discuss in detail the assumptions on the initial data in Lanford's theorem, as they encode all the information on the future evolution. The statement is the following.

\begin{Thm}[\cite{lanford}]
\label{lanford-thm}
Consider a system of $N$ hard-spheres of diameter $\eps$ on the~$d$-dimensional periodic box $\T^d = [0,1]^d$ (with~$d\geq 2$), initially ``independent" and identically distributed with continuous density 
$f_0$ such that 
\begin{equation}
\label{eq: initial data 1}
\big\| f_0 \exp(\mu + \frac{\beta}{2}|v|^2 ) \big\|_{L^\infty(\T^d_x \times \R^d_v)}\leq 1 \, ,
\end{equation}
 for some $\beta>0, \mu \in \R$.
For instance, we can choose  the initial distribution of~$N$ particles with minimal correlations,  due only  to the non overlapping conditions~:
 \begin{equation}
\label{eq: initial data 2}
f_{N,0} (x_1, v_1, \dots, x_N, v_N ) = \frac1{\cZ_N} \prod_{i=1}^Nf_0(x_i, v_i)  \prod_{i\neq j} \indc_{|x_i-x_j|>\eps }\,,
\end{equation}
 denoting by $\cZ_N$ the partition function, that is the normalizing constant for $f_{N,0}$ to be a probability.
 
In the Boltzmann-Grad limit~$N \to \infty$ with $N \eps^{d-1} = 1$, the one particle distribution $f_N^{(1)}=f_N^{(1)}(t,x,v)$   converges almost everywhere to the solution of the Boltzmann equation
\begin{equation}
\label{boltzmann}
\left\{ \begin{aligned}
& \d_t f +v\cdot \nabla_x f=   Q(f,f) \, ,\\
& Q(f,f)(v):=\iint_{{\mathbb S}^{d-1} \times \R^d} [f(v')f(v'_1)-f(v)f(v_1)]  \,
{\color{black} \big((v_1-v)  \cdot \nu\big)_+ } \, dv_1 d\nu  \, ,\\
& v'=v +\nu \cdot (v_1-v)  \, \nu \, , 
	\quad v'_1=v_1 - \nu \cdot(v_1-v) \, \nu \,  ,
 \end{aligned}\right.
\end{equation}
with initial data~$f_0$, {\color{black} on a time interval $[0, t^*]$ where  $t^*$ depends only on the parameters $\beta,\mu$ of~\eqref{eq: initial data 1}}.
 \end{Thm}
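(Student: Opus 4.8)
The plan is to compare the $N$-particle hard-sphere dynamics with the Boltzmann equation \emph{through their respective hierarchies of marginals}, following Lanford's scheme and its later quantitative refinements. \emph{Step 1: the two hierarchies and their iterated Duhamel expansions.} Let $f_N(t)$ be the push-forward of $f_{N,0}$ by the hard-sphere flow and $f_N^{(s)}(t)$, $1\le s\le N$, its $s$-particle marginals (integrated over the remaining particles on the non-overlapping domain). Liouville's equation yields the BBGKY hierarchy
\[
\partial_t f_N^{(s)} + \sum_{i=1}^s v_i\cdot\nabla_{x_i} f_N^{(s)} = (N-s)\,\eps^{d-1}\, C_{s+1} f_N^{(s+1)},
\]
where $C_{s+1}$ integrates the trace of $f_N^{(s+1)}$ over the contact sphere of an added $(s+1)$-st particle with one of the first $s$, and splits into a gain term (with scattering) and a loss term. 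Since $N\eps^{d-1}=1$ the prefactor $(N-s)\eps^{d-1}\to 1$, and the formal limit is the Boltzmann hierarchy, obtained by replacing the hard-sphere transport by free transport and $C_{s+1}$ by the operator $C^0_{s+1}$ that glues the added particle \emph{at} $x_i$. Iterating Duhamel's formula represents $f_N^{(s)}(t)$ as
\[
f_N^{(s)}(t)=\sum_{n\ge 0}\int_0^t\!\!\int_0^{t_1}\!\!\cdots\!\!\int_0^{t_{n-1}} \mathcal T_s(t-t_1)\,C_{s+1}\,\mathcal T_{s+1}(t_1-t_2)\cdots \mathcal T_{s+n-1}(t_{n-1}-t_n)\,C_{s+n}\,\mathcal T_{s+n}(t_n)\,f_{N,0}^{(s+n)}\,dt_n\cdots dt_1,
\]
with $\mathcal T_k$ the $k$-particle hard-sphere flow, and the analogous series for Boltzmann with free flows $\mathcal T^0_k$ and operators $C^0_k$. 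Each term is an integral over \emph{collision trees} ($s$ roots and $n$ successive adjunctions) of the initial density evaluated along backward \emph{pseudo-trajectories}, built by flowing backward, adding particles at the collision times, and applying (gain) or not (loss) the scattering rule.

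\emph{Step 2: uniform bounds and the short time.} The central a priori estimate is of Cauchy--Kovalevskaya type: using the Gaussian weight in \eqref{eq: initial data 1} to integrate out the velocities of the added particles, together with a crude bound $\lesssim C^n$ on the number of collision trees (the factorial losses being absorbed by the time integrals $t^n/n!$), I would show that both series converge \emph{absolutely, uniformly in $N$}, in an $L^\infty$ norm with Gaussian weight, on an interval $[0,t^*]$ with $t^*$ depending only on $\beta,\mu$. This is exactly the step responsible for the smallness of the time, and the one not known to be improvable. In parallel one needs \emph{chaos at time zero}: from \eqref{eq: initial data 2} and an expansion of the partition function $\cZ_N$, one checks that $f_{N,0}^{(s)}\to f_0^{\otimes s}$ (almost everywhere and in the weighted norm), the correlations being $O(\eps)$ and supported near the overlapping set.

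\emph{Step 3: term-by-term convergence and the removal of recollisions.} Given the uniform convergence of the series, it suffices to prove that, for each fixed $s$ and $n$, the $n$-th BBGKY term converges to the $n$-th Boltzmann term. The flows $\mathcal T_k$ differ from $\mathcal T^0_k$ only through collisions among the present particles, and $C_{s+1}$ differs from $C^0_{s+1}$ only by the $O(\eps)$ shift of the adjunction point; hence, off a bad set, the hard-sphere pseudo-trajectory converges uniformly to the free (Boltzmann) one and the integrands match in the limit. \textbf{The main obstacle is the control of this bad set}: one must show that the set of backward pseudo-trajectories producing a \emph{recollision} (two particles meeting again after a collision, or a freshly added particle hitting an existing one away from its prescribed adjunction) has measure $O(\eps^{\alpha})$. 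This is a geometric/measure-theoretic argument: after first discarding small-probability configurations (nearly grazing collisions, nearly aligned velocities, clustered collision times), one freezes all parameters of the tree except one impact direction or one adjunction time, observes that the recollision condition then confines that variable to a set of size $O(\eps^{\alpha})$ uniformly on compact velocity sets, and sums over the finitely many pairs in the tree. The same estimates show that the limiting Boltzmann integrand is carried by \emph{non-pathological} trees.

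\emph{Step 4: conclusion.} The limit $f^{(s)}$ of $f_N^{(s)}$ then solves the Boltzmann hierarchy on $[0,t^*]$ with tensorized initial data $f_0^{\otimes s}$. By the same Cauchy--Kovalevskaya estimates the Boltzmann hierarchy has a \emph{unique} solution in the weighted space on $[0,t^*]$, and a short-time solution $f$ of \eqref{boltzmann} with the stated Gaussian bound exists and makes $f^{\otimes s}$ such a solution; hence $f^{(s)}=f^{\otimes s}$, and in particular $f_N^{(1)}\to f$. The convergence is almost everywhere since the pseudo-trajectories converge pointwise off the bad set and the bad set has vanishing measure in the limit, which closes the argument.
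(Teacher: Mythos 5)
Your proposal follows essentially the same route as the paper's Section~\ref{sec: Lanford's proof}: iterated Duhamel expansions of the BBGKY and Boltzmann hierarchies, Cauchy--Kowalevsky type weighted $L^\infty$ bounds yielding the time $t^*$, term-by-term convergence of pseudo-trajectories after excising the small recollision sets, quantitative chaos of the initial marginals, and uniqueness for the Boltzmann hierarchy to identify the limit with $f^{\otimes s}$. This is exactly Lanford's argument as the paper recalls it, so no further comparison is needed.
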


 \medskip

Extensions of this result to different interaction potentials have been recently achieved in~\cite{GSRT,PSS}.

\smallskip

As asserted by Boltzmann himself, the absence of contradiction between reversible microscopic (Newton) equations and the non-reversible Boltzmann equation is due to the fact that only particular, ``typical'' solutions to the former equation are well approximated by $f$. The way to give a precise meaning to this typicality is to introduce a statistical description of the initial state, which is in fact the point of view of Theorem \ref{lanford-thm}
 \cite{La76,S2}.
 
The goal of the present paper is to analyze in detail the proof of Lanford's theorem in order to point out where irreversibility shows up.
We shall see that part of the information is lost in the convergence process 
as some pathological sets of configurations with vanishing measure are neglected.
These sets turn out to be not time-reversal invariant and the possibility to retrace one's steps fades in the limit.

Furthermore note that, in Theorem \ref{lanford-thm}, the weak notion of convergence at time $t$ prevents 
us from iterating the result as written. Describing more precisely the geometry of the microscopic sets, we shall 
{use} a notion of {\bf one-sided convergence} holding at positive times as well as at time zero.
Thus we will obtain a refined statement of the theorem (Theorem~\ref{lanford-thm2}) compatible both with the {\bf irreversibility} and the 
{\bf time-concatenation} (semigroup) properties of the limiting equation (Sect.\;\ref{sec:ITC}).
A similar notion of one-sided convergence has been introduced by Denlinger in \cite{denlinger}, see also
\cite{Ki75} for a first, non quantitative version.

In order to characterize precisely the (small) sets where the convergence of the initial data is essential, we shall finally construct 
explicit examples of measures which are badly behaved exclusively in those regions, leading to a violation of Theorem 
\ref{lanford-thm} (Sect.\;\ref{slightmodif}).

\subsection{Microscopic dynamics}

\noindent In the following we denote, for~$1 \leq i \leq N$, $z_i:=(x_i,v_i)$ and~$Z_N:= (z_1,\dots,z_N)$.   With a slight abuse we say that~$Z_N $ belongs to~$ \T^{dN} \times \R^{dN}$ if~$ X_N:=(x_1,\dots,x_N)$ belongs to~$ \T^{dN}$ and~$V_N:=(v_1,\dots,v_N)$ to~$ \R^{dN}$.
The phase space is denoted by
$$
{\mathcal D}_\eps^N:= \big\{Z_N \in \T^{dN} \times \R^{dN} \, / \,  \forall i \neq j \, ,\quad |x_i - x_j| > \eps \big\} \,,
$$
{where $|\cdot|$ stands for the distance on the torus.}
We now distinguish   pre-collisional configurations  from   post-collisional  ones by defining 
 for indexes~$1 \leq i \neq j \leq N$
$$
\begin{aligned}
\d {\mathcal D}_{\eps}^{N\pm }(i,j) := \Big \{Z_N \in \T^{dN} \times \R^{dN} \, / \,  &|x_i-x_j| = \eps \, , \quad \pm (v_i-v_j) \cdot (x_i- x_j) >0 \\
& \mbox{and}  \, \forall (k,\ell) \in [1,N]^2 \setminus \{(i,j)\} \hbox{ with }  k\neq \ell \, ,  |x_k-x_\ell| > \eps\Big\} \, .
\end{aligned}
$$
Given a post-collisional configuration $Z_N$ on~$\d {\mathcal D}_{\eps}^{N+}(i,j)$,  we define~$Z_N'\in \d {\mathcal D}_{\eps}^{N-}(i,j)$ as the (pre-collisional) configuration having the same positions $(x_k)_{1\leq k\leq N}$, the same velocities~$(v_k)_{k\neq i,j}$ for non interacting particles, and the following pre-collisional velocities for particles~$i$ and $j$
$$
\begin{aligned}
 v_i'& := v_i - \frac1{\eps^2} (v_i-v_j)\cdot (x_i-x_j) (x_i-x_j)   \\
 v_j'& := v_j + \frac1{\eps^2} (v_i-v_j)\cdot (x_i-x_j) (x_i-x_j) \, .
\end{aligned}
$$

	\medskip
	
	\noindent 
Defining the Hamiltonian  
\begin{equation}
\label{eq: Hamilton}
H_N (V_N):= \frac12  \sum_{i=1}^N |v_i|^2  \, ,
\end{equation}
we consider the Liouville equation in the $2Nd$-dimensional phase space~${\mathcal D}_\eps^N$ 
	\begin{equation}
	\label{Liouville}
	\d_t f_N +\{ H_N, f_N\} =0\, ,
	\end{equation}
	with specular reflection on the boundary, meaning that if~$Z_N$ belongs to~$\d {\mathcal D}_{\eps}^{N+}(i,j)$  then
\begin{equation}\label{tracecondition}
	 f_N (t,Z_N ) =  f_N (t,Z_N') \, .
\end{equation}
We have denoted~$\{ \cdot, \cdot\}$ the Poisson bracket defined by
$$
\{ f, g\}:= \nabla_{V_N} f \cdot \nabla_{X_N} g -  \nabla_{X_N} f \cdot \nabla_{V_N} g \, .
$$
The Liouville equation~(\ref{Liouville})
writes therefore
$$
\d_t f_N + V_N \cdot   \nabla_{ X_N} f_N = 0\, , 
$$
with initial data given by \eqref{eq: initial data 2} and the condition \eqref{eq: initial data 1}.

\begin{Rmk}
\label{boundary-rmk}
Note that although the boundary condition~{\rm(\ref{tracecondition})} seems to introduce a symmetry between pre-collisional and post-collisional configurations, what has to be prescribed for the system to be well-posed is the density on post-collisional configurations for positive times, and for pre-collisional configurations for negative times, which are the incoming configurations for the transport equation~{\rm(\ref{Liouville})}.
\end{Rmk} 

We recall, as shown in~\cite{alexanderthesis} for instance, that
the set of initial configurations leading to   ill-defined characteristics (due to grazing collisions, clustering of collision times, or collisions involving more than two particles) is of measure zero in~${\mathcal D}_\eps^N$.

\subsection{Propagation of chaos}
\label{sec: chaos}

We define the marginals  on~$\mathcal D_\eps^n$ (extending by zero outside) by
\begin{equation}
\label{eq: marginals}
f_N^{(n)} (t,Z_n):=\int f_N (t, Z_N) \, dz_{n+1} \dots dz_N .
\end{equation}
Then one can show formally as in~\cite{grad,lanford} and~\cite{CIP,GSRT} that the first marginal, which describes 
the typical evolution of the gas, evolves according to  
\begin{align}
\label{eq: marginal 2}
& (\d_t +  v \cdot \nabla_x ) f_N^{(1)} (t,x,v) 
=  (N-1) \eps^{d-1} \\
& \times  \int_{{\mathbb S}^{d-1} \times \R^d} \Big( f_N^{(2)}( t, x , v',  x+\eps \nu, v_1')
 -   f_N^{(2)}( t, x, v,  x  - \eps \nu, v_1) \Big)
 \big((v_1-v) \cdot \nu\big)_+ d\nu dv_1 \, ,  
\nonumber
\end{align}
with $v',v_1'$ as in \eqref{boltzmann}.
This equation can be interpreted by saying that a particle at $z=(x,v)$ moves in a straight line until it collides with one of the remaining $N-1$ particles with velocity~$v_1$. The velocities $v',v_1'$ after the collision are then updated and the source term is determined by the joint distribution $f_N^{(2)}$.

\medskip

The notion of propagation of chaos (Sto{\ss}zahlansatz) lies at the heart of the derivation of Boltzmann's equation
 \eqref{boltzmann}. 
Heuristically, one would like to write that 
when two particles at configurations~$z= (x,v)$ and $z_1 = (x + \eps \nu, v_1)$ collide  then the marginal distribution factorizes
\begin{equation}
\label{eq: Stosszahlansatz 0}
\lim_{N \to \infty} \Big| f_N^{(2)} (t,z, z_1) -  f_N^{(1)}(t,z) f_N^{(1)}(t,z_1) \Big| = 0 \, .
\end{equation}
This statement of the Sto{\ss}zahlansatz is far from a mathematical assertion as $f_N^{(2)}$ is only defined almost surely in $\T^{2d} \times \R^{2 d}$ and not on sets of codimension 1. 
A more standard notion of propagation of chaos is given by the following definition.
\begin{Def}[Chaos property]
\label{def: chaos}
The sequence of measures $f_N$ is said asymptotically chaotic at time $t$ if there exists a measurable $f(t)$
on $\T^{d} \times \R^{d}$ such that, almost surely in 
$(z, z_1)$ in~$ (\T^{ d} \times \R^{d})^2$,
\begin{equation}
\label{eq: Stosszahlansatz}
 \begin{aligned}
&  \lim_{N \to \infty} f_N^{(1)}(t,z) = f(t,z) \;,\\
& \lim_{N \to \infty} \Big| f_N^{(2)} (t,z, z_1) -  f(t,z) f(t,z_1) \Big| = 0\;.
 \end{aligned}
\end{equation}
\end{Def}
In \eqref{eq: Stosszahlansatz} the coordinates $z, z_1$ are fixed independently of $N$ and $\eps$ (contrary to \eqref{eq: Stosszahlansatz 0}). As a consequence, this notion turns out to be too weak to derive Boltzmann equation from the microscopic evolution.

We shall see in Section \ref{sec: Lanford's proof} that the proof of Theorem \ref{lanford-thm} is not based on proving propagation of chaos but on a more global convergence of all the marginals. One of the goals of this paper is to quantify the refined notion of convergence (see Theorem \ref{lanford-thm2}) which is strictly needed in Lanford's argument. The propagation of chaos~\eqref{eq: Stosszahlansatz} can be derived as a byproduct.

 \section{Lanford's proof}
\label{sec: Lanford's proof}

  In order to understand how the assumptions on the initial data come into play, we have to look more precisely at the proof of  Theorem \ref{lanford-thm}. 
  Theorem \ref{lanford-thm} is actually the corollary of a more precise  result.
  Lanford's result indeed provides the convergence of all marginals
$f_N^{(n)}$  defined in \eqref{eq: marginals} to the solutions $f^{(n)}$ of an infinite system of coupled equations
\begin{equation}
\label{hierarchyboltzmann}
 \begin{aligned}
&  \d_t f^{(n)} +\sum_{i=1}^n v_i\cdot \nabla_{x_i} f^{(n)} =       C^0_{n,n+1} f^{(n+1)},\\
&\!\!\Big(   C^0_{n,n+1} f^{(n+1)}\Big) (x_1,v_1,\dots, x_n, v_n) \\
&     
:=\sum_{i=1}^n \iint_{{\mathbb S}^{d-1} \times \R^d} \Big(  f^{(n+1)} (x_1,v_1,\dots, x_i, v'_i,\dots x_n,v_n, x_i, v_{n+1}')\\
   &  \qquad\qquad   -  f^{(n+1)} (x_1,v_1,\dots, x_i, v_i,\dots x_n,v_n, x_i, v_{n+1})\Big)    \,\big(( v_{n+1}-v_i)  \cdot \nu\big)_+  \, dv_{n+1} d\nu  \, ,\\
 \end{aligned}
 \end{equation}
which is the so-called Boltzmann hierarchy. 
Chaotic families   of the form $ f^{(n)}= f^{\otimes n}$ with $f$ solution to the Boltzmann equation are specific solutions to this hierarchy, where
$$
f^{\otimes n} (Z_n):= \prod_{i=1}^n f(z_i)\;.
$$
The connection between Boltzmann hierarchy and Boltzmann equation is discussed in \cite{S1}.

 The starting point of the proof is    to write an explicit representation of the~$n$ particle distribution 
 $f_N^{(n)}$ as a superposition of different~$(n+s)$-particle pseudo-dynamics, with weights depending on the initial data. More precisely, by averaging and iterating Duhamel's formula for the $N$-particle distribution $f_N$, 
 we  end up with a series expansion for $f_N^{(n)}$ in which the term of order $s$ corresponds to   pseudo-dynamics involving $s$ collisions and 
 is therefore expressed as an operator acting on the initial $(n+s)$-particle distribution $f_{N,0}^{(n+s)}$ (see Section~\ref{seriesexpansion}).
 
 The strategy of proof then relies  on two main steps.
 \begin{itemize}
 \item First we obtain a uniform bound on the series expansion, which explains the short time restriction in Theorem \ref{lanford-thm}. 
In the following, we restrict our attention to times smaller than the radius of analyticity of the series.
 \item The convergence to the solution of the Boltzmann hierarchy  then follows from the convergence of the trajectories representing the different pseudo-dynamics
  (note that these trajectories are related to the representation formula and that they do not coincide in general with the physical trajectories of the particles,
  e.g.\,\cite{PS15} for further discussions). The convergence of pseudo-trajectories fails to hold when there are recollisions (see page~\pageref{defrecollision} for a precise definition of recollisions). A geometric argument shows however that, for any fixed~$n$, the set of initial configurations with
  $n$ particles leading to such recollisions is of vanishing measure in the~$N \to \infty$ limit.
 \end{itemize}
Note that all the information on these bad sets is forgotten in the limit: this is related to   irreversibility, that is to the impossibility of going back to the initial state. Furthermore the convergence of the first marginal to the solution of the Boltzmann equation in the case of factorized initial data such as~(\ref{eq: initial data 2})
 is due to a uniqueness property for the Boltzmann hierarchy; this follows from the uniform bound on the hierarchy obtained in the first step of the above strategy.

\subsection{The series expansions}
\label{seriesexpansion}

 A formal computation based on Green's formula (see~\cite{Ce72,lanford,GSRT}  for instance)  leads to the following  BBGKY  hierarchy for~$n<N$
\begin{equation}
\label{eq: BBGKY}
(\d_t +\sum_{i=1}^n v_i\cdot \nabla_{x_i} ) f_N^{(n)} (t,Z_n) =   \big( C_{n,n+1} f_N^{(n+1)}\big) (t,Z_n)\, ,
\end{equation}
	on~$\cD_\eps^n$ 	with the boundary condition as in~(\ref{tracecondition})
	$$f_N^{(n)} (t,Z_n) = f_N^{(n)} (t,Z_n') \quad \text{ on } \quad \d {\mathcal D}_\eps^{n +}(i,j)  \, .$$
  	The collision term is defined by
\begin{align}
\label{BBGKYcollision}
\big( C_{n,n+1} & f_N^{(n+1)}\big) (Z_n)  :=   (N-n) \eps^{d-1} \nonumber \\
&\times \Big( \sum_{i=1}^n \int_{{\mathbb S}^{d-1} \times \R^d}  f_N^{(n+1)}(\dots, x_i, v_i',\dots , x_i+\eps \nu, v'_{n+1}) \big((v_{n+1}-v_i) \cdot \nu\big)_+ d\nu dv_{n+1}  \\
& \quad    - \sum_{i=1}^n \int_{{\mathbb S}^{d-1} \times \R^d}    f_N^{(n+1)}(\dots, x_i, v_i,\dots , x_i+\eps \nu, v_{n+1}) \big((v_{n+1}-v_i) \cdot \nu\big)_- d\nu dv_{n+1} \Big), \nonumber \\
\mbox{with} & \quad v_i ' := v_i  -  (v_i -v_{n+1})\cdot \nu \,  \nu \,   ,\quad v_{n+1}' := v_{n+1} +  (v_i -v_{n+1})\cdot \nu \,  \nu \, . \nonumber
\end{align}
The closure  for $n=N$ is given by the Liouville equation (\ref{Liouville}). 
				  Note that  the collision integral is split into two terms according to the sign of $(v_i-v_{n+1}) \cdot \nu $ and we used the trace condition on $\d {\mathcal D}_\eps^{N+}(i,n+1)$
		to  express all quantities in terms of pre-collisional configurations.

To obtain the Boltzmann hierarchy, we  compute the formal  limit of the transport and collision operators when~$\eps$ goes to~$ 0$. 
Recall that for fixed $n$, then $(N-n) \eps ^{d-1} \to 1$ in the Boltzmann-Grad limit.
Thus the limit hierarchy is  given by
\begin{equation}
\label{hier: Botzmann}
	(\d_t +\sum_{i=1}^n v_i\cdot \nabla_{x_i} ) f^{(n)} (t,Z_n) =   \big(   C^0_{n,n+1} f^{(n+1)}\big) (t,Z_n)
	\end{equation}
in~$(\T^d \times \R^d)^n$, where~$  C^0_{n,n+1}$ are   the limit collision operators defined by (\ref{hierarchyboltzmann}). We denote by~$(f_0^{n})_{n \in \N}$ a family of initial data for this hierarchy (which will be specified later).

\medskip
		
Iterating Duhamel's formula for the BBGKY hierarchy (\ref{eq: BBGKY}), we get
\begin{equation}
\label{duhamel1}
 \begin{aligned}
 f^{(n)} _N(t) =\sum_{s=0}^{N-n}    Q_{n,n+s}(t) f^{(n+s)}_{N,0} \, ,
\end{aligned}
\end{equation}
where we have defined
$$ \begin{aligned}
Q_{n,n+s}(t) f^{(n+s)}_{N,0}  := \int_0^t \int_0^{t_{n+1}}\dots  \int_0^{t_{n+s-1}}  {\bf S}_n(t-t_{n+1}) C_{n,n+1}  {\bf S}_{n+1}(t_{n+1}-t_{n+2}) C_{n+1,n+2}   \\
\dots  {\bf S}_{n+s}(t_{n+s})     f^{(n+s)}_{N,0} \: dt_{n+s} \dots d t_{n+1} 
\end{aligned}$$
denoting by~${\bf S}_n$  the group associated with free transport in $\cD_\eps^n$ with specular reflection on the boundary. 

\begin{Rmk}\label{collision-rmk}
Note that, for fixed $N$,  the operator $C_{n,n+1}  $ is  a trace on a manifold of codimension 1 and thus it is a priori not defined  on  $L^\infty$ functions. 
What makes sense is the combination $\displaystyle \int dt_{n+1} C_{n,n+1}  {\bf S}_{n+1}(t_{n+1}-t_{n+2})$ (see~\cite{bardos, Simonella, GSRT} and Figure~{\rm\ref{volume-fig}}).

\begin{figure}[h]
\begin{center}
\scalebox{0.4}{\includegraphics{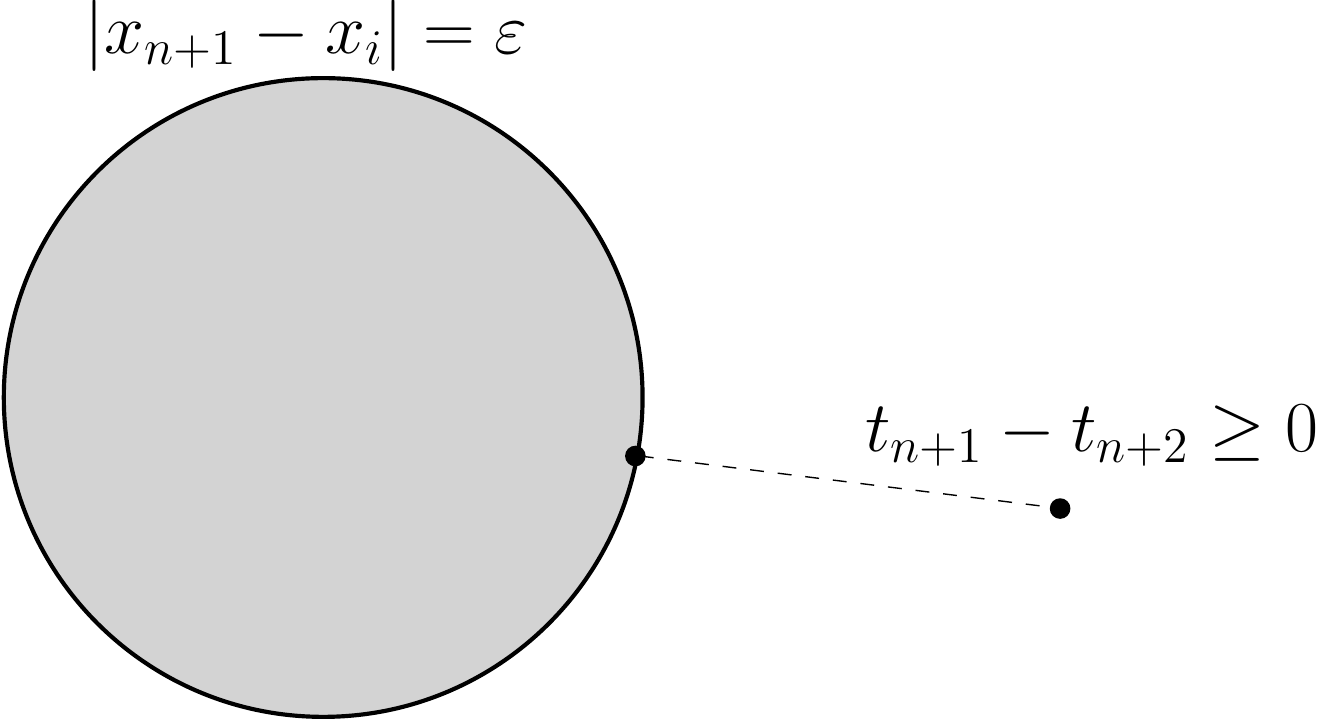}}
\caption{\label{volume-fig} 
{
The grey domain is an excluded region and its boundary is the surface 
$|x_{n+1} - x_i| = \eps$ corresponding to a  collision  between particles $i$ and $n+1$. 
The admissible configurations are outside this domain and can be parametrised
by a point of the surface and a non negative time $t_{n+1}-t_{n+2}$, provided that
the velocities are pre-collisional.}
}
\end{center}
\end{figure}

For $t\geq 0$, one has $t_{n+1}-t_{n+2}\geq 0$, it is therefore necessary to express the collision operator in terms of pre-collisional configurations. 
In a symmetric way, for $t\leq 0$, one has~$t_{n+1}-t_{n+2}\leq 0$, and we have to express the collision operator in terms of post-collisional configurations (see Remark~{\rm\ref{boundary-rmk}}).
\end{Rmk}

Similarly, for the Boltzmann hierarchy (\ref{hier: Botzmann})
 \begin{equation}
 \label{duhamel2}
 \begin{aligned}
 f^{(n)}(t) =\sum_{s=0}^{\infty}    Q^0_{n,n+s}(t) f^{(n+s)}_{0} \, ,
\end{aligned}
\end{equation}
where we have defined
$$ \begin{aligned}
Q^0_{n,n+s} (t) f^{(n+s)}_{0} = \int_0^t \int_0^{t_{n+1}}\dots  \int_0^{t_{n+s-1}}  {\bf S}^0_n(t-t_{n+1})   C^0_{n,n+1}  {\bf S}^0_{n+1}(t_{n+1}-t_{n+2})   C^0_{n+1,n+2}   \\
\dots  {\bf S}^0_{n+s}(t_{n+s})     f^{(n+s)}_{0} \: dt_{n+s} \dots d t_{n+1} ,
\end{aligned}$$
denoting by~${\bf S}^0_n$  the group associated with free transport in $(\T^d \times \R^d)^n$.

Let us denote $|{C}_{s,s+1} |$, $|Q_{n,n+s}|$ the operators obtained by summing the absolute values of all the elementary terms.
The energy $H_s =\frac 12 \sum_{i=1}^{s} |v_i|^2$  is conserved by the transport so that
$$
{\bf S}_s \left(  \exp\left(- \beta H_s \right) \indc_{\cD_\eps^s} \right) =  \exp\left(-\beta H_s \right) \indc_{\cD_\eps^s} \, ,
$$
and from the loss estimates on the collision operators (see \cite{GSRT} for instance)
$$
\begin{aligned}
|{C}_{s,s+1} | 
\left( \exp\left(-\beta H_{s+1}\right) \indc_{\cD_\eps^{s+1} } \right) 
  \leq C {\beta^{-d/2} } \Big( s\beta^{-\frac12} + \sum_{1 \leq i \leq s} |v_i|\Big) 
\exp\left(-\beta H_s \right) \indc_{\cD_\eps^s} \, ,
\end{aligned}
$$
we  get the Cauchy-Kowalevsky type iterated estimate  for~$\tilde \beta<\beta$ 
\begin{equation}
\label{continuity2}
 |Q_{n,n+s}| (t)\left(  \exp\left(-\beta  H_{n+s}\right) \indc_{\cD_\eps^{n+s} } \right)  \leq  C^{n+s} C_{\beta, \tilde \beta } ^{s}   \exp\left(-\tilde \beta H_{n}  \right)\,,
\end{equation}
with $C_{\beta, \tilde \beta} = \beta^{-(d+1)/2}   t/( \beta- \tilde \beta)$.

{\color{black}
Using the initial data \eqref{eq: initial data 2} and the condition \eqref{eq: initial data 1}, we deduce 
following~\cite{ukai} an upper bound on the marginals from \eqref{duhamel1} 
\begin{equation}
\label{eq: convergence}
\forall t \leq t^*, \qquad 
f_N^{(n)} (t)  \leq    \exp  (( \lambda t- \mu ) n  ) \exp \Big(( \lambda t- \beta )  H_n \Big) \, ,
\end{equation}
where~$\lambda$, chosen large enough,
 depends on~$\beta, \mu$, and~$t^* $ is such that~$\lambda t^* = \beta/2$.
The convergence time in Lanford's Theorem \ref{lanford-thm} is given by $t^*$.
}

Similar estimates hold for the limit operators~$Q^0_{n,n+s}$ and~$ {\bf S}_s^0$, as well as for the solution of the Boltzmann hierarchy.

\subsection{Geometrical representation as a superposition of pseudo-dynamics}

The usual way to study the $s$-th term of the representation formula is to  introduce some pseudo-dynamics describing the action of the operator $Q_{n, n+s}$.
We first extract combinatorial information on the collision process: we describe the adjunction of new particles (in the backward dynamics)   by  ordered trees.

 \begin{Def}[Collision trees]
 \label{trees-def}
 Let $n\geq 1$, $s\geq 1$ be fixed. An (ordered) collision tree  $a \in \cA_{n, n+s} $ is defined by a family $(a(i)) _{n+1\leq i \leq n+s}$ with $a(i) \in \{1,\dots, i-1\}$.
 \end{Def}
 Note that~~$|\cA_{n,n+s}| \leq n(n+1) \dots (n+s-1)$.

\medskip

  Once we have fixed a collision tree $a \in \cA_{n, n+s}$, we can reconstruct pseudo-dynamics starting from any point in the $n$-particle phase space $Z_n= (x_i, v_i) _{1\leq i\leq n}$ at time $t$.

\begin{Def}[Pseudo-trajectory]
\label{pseudotrajectory}
Given~$Z_n  \in \cD_\eps^n$, consider a collection of times, angles and velocities~$(T_{n+1,n+s}, \Omega_{n+1,n+s}, V_{n+1,n+s}) = (t_i, \nu_i, v_i)_{n+1\leq i\leq n+s}$ with~$0\leq t_{n+s}\leq\dots\leq t_{n+1}\leq t$. We then define recursively the pseudo-trajectories in terms of the  backward BBGKY dynamics as follows
\begin{itemize}
\item in between the  collision times~$t_i$ and~$t_{i+1}$   the particles follow the~$i$-particle backward flow with specular reflection;
\item at time~$t_i^+$,  particle   $i$ is adjoined to particle $a(i)$ at position~$x_{a(i)} + \eps \nu_i$ provided it remains at a distance~$\eps$  from all the others,
 and 
with velocity~$v_i$. 
If $(v_i - v_{a(i)} (t_i^+)) \cdot \nu_i >0$, velocities at time $t_i^-$ are given by the scattering laws
\begin{equation}
\label{V-def}
\begin{aligned}
v_{a(i)}(t^-_i) &= v_{a(i)}(t_i^+) - (v_{a(i)}(t_i^+)-v_i) \cdot \nu_i \, \nu_i  \, ,\\
v_i(t^-_i) &= v_i+ (v_{a(i)}(t_i^+)-v_i) \cdot \nu_i \,  \nu_i \, .
\end{aligned}
\end{equation}
\end{itemize}
 We denote  by~$z_i(a, Z_n, T_{n+1,n+s}, \Omega_{n+1,n+s}, V_{n+1,n+s},\tau)$ the position and velocity of the particle labeled~$i$, at time~$\tau$ (provided~$\tau< t_i$).  We also define
$
G_{n+1,n+s}(a) $ as the set of parameters $(T_{n+1,n+s}, \Omega_{n+1,n+s}, V_{n+1,n+s}) $ such that the pseudo-trajectory  $ Z_{n+s}(a, Z_n, T_{n+1,n+s}, 
\Omega_{n+1,n+s}, V_{n+1,n+s},\tau )$ exists up to time 0, meaning that by adjunction of a new particle, there is no overlap. The  configuration obtained at the end of the tree, i.e. at time 0, is~$Z_{n+s}(a, Z_n, T_{n+1,n+s}, \Omega_{n+1,n+s}, V_{n+1,n+s},0)$.

 Similarly, we define  the pseudo-trajectories associated with the Boltzmann hierarchy. 
These pseudo-trajectories evolve according to the backward Boltzmann dynamics as follows
\begin{itemize}
\item in between the  collision times~$t_i$ and~$t_{i+1}$   the particles follow the~$i$-particle backward free flow;
\item at time~$t_i^+$,  particle   $i$ is adjoined to particle $a(i)$  at exactly the same position $x_{a(i)}$. Velocities are given by the laws {\rm(\ref{V-def})}.
\end{itemize}
We denote this flow by $Z^0_{n+s}(a, Z_n, T_{n+1,n+s}, \Omega_{n+1,n+s}, V_{n+1,n+s},\tau)$.
\end{Def}

With these notations, the representation formulas (\ref{duhamel1}) and (\ref{duhamel2}) for the marginals of order~$n$ can be rewritten respectively
$$
 \begin{aligned}
 f^{(n)} _N(t, Z_n) =\sum_{s=0}^{N-n}   &C^{(N,n,s)} \sum_{a \in \cA_{n, n+s} }   \int_{G_{n+1,n+s}(a)} dT_{n+1,n+s}  d\Omega_{n+1,n+s}  dV_{n+1,n+s} \\
 &   
\Big( \prod_{i=n+1}^{n+s}  \big( (v_i -v_{a(i)} (t_i)) \cdot \nu_i \Big)  
f_{N,0}^{(n+s)}\big (Z_{n+s}(a, Z_n, T_{n+1,n+s}, \Omega_{n+1,n+s}, V_{n+1,n+s},0)\big)  \, ,
 \end{aligned}
$$
with $C^{(N,n,s)}:= (N-n) \dots\big(N-(n+s-1)\big) \eps^{(d-1)s}  = 1 + O ((n+s)^2/N) $ and
$$
 \begin{aligned}
 f^{(n)} (t, Z_n) =\sum_{s=0}^{\infty}   &  \sum_{a \in \cA_{n, n+s} }   \int_{\cT_{n+1,n+s}} dT_{n+1,n+s} \int_{{\mathbb S}^{s}} d\Omega_{n+1,n+s}  \int_{\R^{2s}} dV_{n+1,n+s} \\
 &\times 
\Big( \prod_{i=n+1}^{n+s}  \big( (v_i -v^0_{a(i)} (t_i)) \cdot \nu_i \Big)  
f_{0}^{(n+s)}\big (   Z^0_{n+s}(a, Z_n, T_{n+1,n+s}, \Omega_{n+1,n+s}, V_{n+1,n+s},0)\big)  \, ,
 \end{aligned}
$$
denoting $\cT_{n+1,n}=\emptyset$ and 
$$
\cT_{n+1,n+s} :=\big\{ (t_i)_{n+1\leq i\leq n+s} \in [0,t]^{s} \,/\, 0\leq t_{n+s}\leq\dots\leq t_{s+1}\leq t\big\}\,.
$$

\bigskip
 The question is then to describe the asymptotic behavior of the BBGKY pseudo-trajectories. We actually split them into two classes~:
\begin{itemize}\label{defrecollision}
\item   pseudo-trajectories having no recollision, i.e. such that particles interact only at the times of adjunction of new particles, and are transported freely between two such times;
\item   pseudo-trajectories involving recollisions.
\end{itemize}
Note that no recollision occurs in the Boltzmann hierarchy as the particles have zero diameter.

\subsection{Bad configurations}
 The transport semigroups ${\bf S}_n$ (with recollisions) and ${\bf S}^0_n$ (free transport)  play a key role in the discrepancies between the BBGKY series \eqref{duhamel1} and the Boltzmann series \eqref{duhamel2}. 
In a given time interval, both transports coincide if no recollision occurs which will be the typical case for fixed $n$ and $\eps$ small. However,  specific configurations lead to recollisions and we   define below the corresponding geometric sets.

Denote by~$B_R$ the ball of~$\R^d$ centered at zero and of radius~$R$, and fix a time~$T$ much bigger than the radius of analyticity $t^*$ given in~\eqref{eq: convergence} as well as a parameter~$\eps_0 \gg \eps$. The sets of bad configurations of $n$ particles are defined as
\begin{equation}
\label{eq: bad set}
\begin{aligned}
 \cB_{\eps_0} ^{n -}:= \Big\{ Z_n \in (\T^d \times B_R)^n , \qquad  \exists u \in [0,T] \, ,  \, \exists i,j , \qquad |x_i - x_j - u(v_i - v_j) | \leq \eps_0  \Big\} \, ,\\
  \cB_{\eps_0}^{n +}:= \Big\{ Z_n \in (\T^d \times B_R)^n , \qquad  \exists u \in [0,T] \, ,  \, \exists i,j , \qquad |x_i - x_j + u(v_i - v_j) | \leq \eps_0  \Big\}\,,
  \end{aligned}
\end{equation}
{where $|\cdot|$ stands for the distance on the torus.}
This  means that, starting from~$\cB_{\eps}^{n -}$ at time~$t$, the backward free flow on $\cD_\eps^n$ will involve at least one recollision between~$t$ and~$t-T$, and starting from $\cB_\eps^{n +}$, the forward free flow on $\cD_\eps^n$ will involve at least one recollision between~$t$ and $t+T$.
In particular outside these sets, we have
$$ \begin{aligned}
\left( {\bf S}_n (t)- {\bf S}^0_n (t)\right)\Big(  f_{N,0} ^{(n)}(1-\indc_{\cB_\eps ^{n +}})\Big)  &= 0
\quad \hbox{ for } t \in [0,T]\, ,\\
\left( {\bf S}_n (t)- {\bf S}^0_n (t)\right)\Big(   f_{N,0} ^{(n)}(1-\indc_{\cB_\eps ^{n -}})\Big)  &= 0
\quad \hbox{ for } t \in [-T,0]\, .
\end{aligned}
$$
The first term in each series \eqref{duhamel1} and \eqref{duhamel2} involves the transport, both first terms coincide when $\pm t >0$ for configurations which are outside the bad set $\cB_\eps ^{n \pm}$.
{We stress the fact that similar sets have been already introduced by Denlinger in \cite{denlinger}
and previously identified in \cite{BLLS} as key sets
(see Appendix~A of \cite{BLLS} for a discussion on the irreversibility).

\medskip

The following result is an easy calculation.
\begin{Prop}
The bad sets are ordered
$$ \cB_{\eps}^{n\pm} \subset \cB_{\eps'} ^{n\pm} \quad \forall \eps' \geq \eps\,.$$
Their measure is controlled by
$$|\cB_\eps^{n\pm} | \leq (CR^d)^{n} n^2 RT \eps^{d-1}\,,$$
and the intersection is much smaller
$$|\cB_\eps^{n+}\cap \cB_\eps^{n-}  | \leq (CR^d)^{n}  \big( n^2  \eps^{d}+n^4 R^2 T^2 \eps ^{2(d-1)} \big) \,.$$
\end{Prop}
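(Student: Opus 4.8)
The plan is to treat the three assertions separately, each being an elementary consequence of the explicit definition \eqref{eq: bad set}.

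For the monotonicity, one simply observes that if $|x_i - x_j \mp u(v_i-v_j)| \leq \eps$ for some $u\in[0,T]$ and some pair $i\neq j$, then a fortiori the same quantity is $\leq \eps'$ whenever $\eps'\geq\eps$. Hence $\cB_\eps^{n\pm}\subset\cB_{\eps'}^{n\pm}$ with the same witnessing $u$, $i$, $j$. (Strictly, \eqref{eq: bad set} is stated for $\eps_0$ in place of $\eps$, so this is really monotonicity in the parameter $\eps_0$; I will phrase it accordingly.)

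For the measure bound, I would fix the pair $(i,j)$ and integrate out all the other particles first: their positions contribute a factor $1$ each (volume of $\T^d$) and their velocities are confined to $B_R$, giving $(C R^d)^{n-2}$, and similarly the velocities $v_i,v_j$ contribute $(CR^d)^2$, so altogether $(CR^d)^{n}$ up to adjusting $C$. It then remains to bound, for fixed $v_i,v_j$, the measure in $(x_i,x_j)\in\T^{2d}$ of the set where $|x_i - x_j \mp u(v_i-v_j)|\leq\eps$ for some $u\in[0,T]$. Integrating in $x_j$ first gives the full torus; for fixed $x_j$ and fixed relative velocity $w:=v_i-v_j$ with $|w|\leq 2R$, the set of $x_i$ such that the segment $\{x_i - x_j \mp uw : u\in[0,T]\}$ comes within $\eps$ of the origin is contained in an $\eps$-tube of length $\leq T|w|\leq 2RT$ around a line, hence of measure $\leq C\, RT\,\eps^{d-1}$. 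Summing over the $\leq n^2$ choices of ordered pairs $(i,j)$ yields $|\cB_\eps^{n\pm}|\leq (CR^d)^n n^2 RT\eps^{d-1}$.

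For the intersection, the point is that being in $\cB_\eps^{n+}\cap\cB_\eps^{n-}$ forces, for some pairs, both a forward and a backward near-collision, which is a strictly thinner event. There are two cases. Either the same pair $(i,j)$ realizes both conditions: then $|x_i-x_j - u_+ w|\leq\eps$ and $|x_i-x_j+u_- w|\leq\eps$ for some $u_\pm\in[0,T]$, whence $|(u_++u_-)w|\leq 2\eps$; if $u_++u_-$ is bounded below this forces $|w|$ small, and a careful splitting (either $u_++u_-\leq\eps/R$, constraining the times, or $|w|\leq CR^{?}$, constraining the velocity to a small ball while the position still lies in an $\eps$-tube) gives a contribution $\leq (CR^d)^n n^2\eps^d$ after integrating out everything; the cleanest route is to bound the joint $(x_i,x_j,v_i,v_j)$-measure directly, noting the segment through $x_i-x_j$ in direction $w$ must pass within $\eps$ of the origin \emph{and} have length-to-origin distances on both sides, which pins $x_i-x_j$ to an $\eps$-ball around the origin (modulo the $w$-direction) and costs an extra power of $\eps$. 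Or two distinct pairs are involved, say $(i,j)$ forward and $(k,\ell)$ backward: then one intersects two independent tube events of the previous type, gaining a factor $n^2 R^2T^2\eps^{2(d-1)}$ relative to a single one, and summing over the $\leq n^4$ choices of the two ordered pairs gives $(CR^d)^n n^4 R^2T^2\eps^{2(d-1)}$. Adding the two cases yields the stated bound.

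The main obstacle is the first case of the intersection estimate: one must check that requiring a near-collision both forward and backward for the \emph{same} pair genuinely buys a full extra power of $\eps$ (rather than merely $\eps^{d-1}$ again), which requires isolating correctly the degenerate configurations — small relative velocity versus small total crossing time — and verifying that in each the corresponding slice of phase space has the claimed size. The tube-length bookkeeping and the reduction of the $\T^d$ distance to a Euclidean one on scale $\eps\ll 1$ are routine and I would only sketch them.
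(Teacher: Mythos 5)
Your monotonicity argument and your tube bound for $|\cB_\eps^{n\pm}|$ are correct and are exactly the elementary computation intended (the paper gives no written proof, calling the result an easy calculation); the distinct-pair part of the intersection estimate is also fine once the "independence" is phrased via Fubini, conditioning on the shared particle when the two pairs overlap.

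The genuine gap is in the same-pair case, and it sits precisely in the step you declare routine: the reduction of the torus distance to a Euclidean one. Your subtraction argument gives, in the universal cover, $|(u_-+u_+)(v_i-v_j)-k|\le 2\eps$ for some lattice vector $k\in\Z^d$, and only when $k=0$ can you deduce $u_\pm|v_i-v_j|\le 2\eps$ and hence $|x_i-x_j|_{\T^d}\le 3\eps$, which is what produces the $n^2\eps^d$ term. When $k\neq 0$, i.e. the relative trajectory nearly returns after winding around the torus — a real possibility here since $T\gg t^*$ and $T|v_i-v_j|$ can far exceed $1$ (recall $R$ may grow like $|\log\eps|$) — the same pair can satisfy both the forward and the backward condition with $|x_i-x_j|_{\T^d}$ of order one: take $v_i-v_j$ nearly parallel to a lattice direction with $T|v_i-v_j|\gtrsim 1$, so that the forward and backward tubes essentially coincide along a closed geodesic, and the admissible relative positions then fill a set of measure $\sim T|v_i-v_j|\,\eps^{d-1}\gg \eps^{d}$. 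So the claim that the same-pair event "pins $x_i-x_j$ to an $\eps$-ball" and buys a full extra power of $\eps$ is false on the torus, and your dichotomy (small relative velocity versus small total crossing time) does not capture the degenerate configurations. The Proposition itself survives: summing over the windings ($v_i-v_j$ confined to a cone of aperture $\sim\eps/|k|$ around $k/|k|$, with $1\le|k|\lesssim RT$, times a position measure $\lesssim RT\eps^{d-1}$) yields a contribution $\lesssim (CR^d)^{n}\,n^2\,R^2T^2\eps^{2(d-1)}$, which is absorbed by the second term of the stated bound — but this winding analysis is a missing ingredient of your proof, not a routine detail, and without it your decomposition (same pair $\to n^2\eps^d$, distinct pairs $\to n^4R^2T^2\eps^{2(d-1)}$) does not follow.
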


\bigskip
We now suppose that  $t\geq 0$ since the situation when $t\leq 0$ can be deduced by a simple symmetry in~$t$ and~$v$.
The next terms  in the series expansion \eqref{duhamel1}, \eqref{duhamel2}
involve some averaging with respect to the parameters $(t_i, v_i, \nu_i)_{n+1\leq i\leq n+s}$ describing the adjunction of   new particles. 
What can be proved is that, provided that the~$n$-particle backward flow~${\bf \Psi}_n $  on $\cD_\eps^n$
 does not lead to a recollision, then the probability of having a recollision (involving at least one of  the added particles) is very small.

\subsection{Convergence outside   bad configurations}
Let us first prove that the solutions are close by eliminating bad trajectories.
By definition, the set of good configurations with $k$ particles will be such that the particles remain, by backward free flow, at a distance $\eps_0 \gg \eps|\log \eps|$ for a time $T\gg t^*$, i.e. that they belong to the set  
$$
\cG_k(\eps_0) := (\T^d \times B_R)^k \setminus  \cB_{\eps_0} ^{k-} \, .
$$
For particles in $\cG_k(\eps_0)$, the transport ${\bf \Psi}_k$ on $\cD_\eps^k$  coincides with the free flow $ {\bf \Psi}^0_k$ on $(\T^d \times B_R) ^k$. 
Thus, if at time $t$ the configurations $Z_k$, $Z_k^0$ are  such that 
\begin{equation}
\label{eq: example}
\forall i \leq k \, , \qquad |x_i - x_i^0| \leq \eps|\log \eps|\, ,
\qquad 
v_i = v_i^0
\end{equation}
and   $Z_k^0$ belongs to $\cG_k(\eps_0)$, then the configurations ${\bf \Psi}_k (u) Z_k$, ${\bf \Psi}^0_k (u) Z_k^0$ will remain at distance less than $O(\eps|\log \eps|)$ for $u \in [0,t]$.
{Recall that the distance $|\cdot|$ is on the torus.}

 One can show that   good configurations are stable by adjunction of a 
$(k+1)^{\text{th}}$-particle next to a particle labelled by $m_k \leq k$, provided some bad sets are removed. 
More precisely, let~$Z_k^0 = (X_k^0, V_k)$ be in~$\cG_k(\eps_0)$ and  
$Z_k = (X_k, V_k)$ with positions close to $X_k^0$ and same velocities (cf. \eqref{eq: example}). Then,
by  choosing the velocity $v_{k+1}$ and the deflection angle 
$\nu_{k+1}$ of the new particle $k+1$ outside a bad set $B_{m_k} (Z_k^0)$, both configurations
$Z_k$ and $Z_k^0$ will remain close to each other.
Of course, immediately after the adjunction, the particles $m_k$ and $k+1$ will not be at distance~$\eps_0$, but~$v_{k+1}, \nu_{k+1}$ can be chosen such that the particles drift rapidly far apart and
after a short time $\delta>0$ the configurations $Z_{k+1}$ and $Z^0_{k+1}$
are again in the good sets $\cG_{k+1} (\eps_0/2)$ and~$\cG_{k+1} (\eps_0)$.

\begin{Prop}[\cite{GSRT}]
\label{geometric-prop}
We   fix   parameters~$ \eps \ll \eps_0, \delta \ll 1$     such that
\begin{equation}
\label{sizeofparameters}
|\log \eps | \eps \ll \eps_0 \ll \min(\delta R,1)    \, .
\end{equation}
Given~$Z_k^0 = (X_k^0, V_k) \in \cG_k(\eps_0)$ and  $m_k \leq k$, there is a 
subset~$B_{m_k}  (Z_k^0)$ 
of~${\mathbb S}^{d-1} \times B_R$  of small measure
\begin{equation}
\label{pathological-size}
\big |B_{m_k}( Z_k^0) \big |  \leq  CkR^d 
\gamma (\eps,\eps_0)
\quad \text{with} \quad 
\gamma (\eps,\eps_0):= |\log \eps|  \left( \left( { \eps \over \eps_0}\right)^{d-1} + (R T  )^d  \eps_0  ^{d-1}+\left({ \eps_0 \over R \delta}\right)^{d-1}  \right),
\end{equation}
such that good configurations close to $Z_k^0 $  are stable by adjunction of a collisional particle close to the particle~$x^0_{m_k}$ 
in the following sense. 
\smallskip
\noindent 	 
Let~$Z_k = (X_k, V_k)$ be a configuration of $k$ particles satisfying \eqref{eq: example}, i.e.
$| X_k- X_k^0  | \leq |\log \eps | \eps$. 
Given~$(\nu_{k+1},v_{k+1}) \in ({\mathbb S}^{d-1} \times  B_R) \setminus B_{m_k}( Z_k^0)$,
a new particle with velocity $v_{k+1}$ is added at $x_{m_k} + \eps \nu_{k+1}$ to $Z_k$ and at $x_{m_k}^0$ to $Z_k^0$.
Two possibilities may arise 

$ \bullet $ For a pre-collisional configuration~$\nu_{k+1} \cdot (v_{k+1} - v_{m_k})< 0$ then 
\begin{equation}
\label{taugeq0precoll}
\forall u \in ]0,t]  \, , \quad \left\{ \begin{aligned}
& \forall i \neq j \in [1,k] \, , \quad  | (x_i -u \, v_i) -  (x_j - u \, v_j) |  > \eps \, ,\\
& \forall j \in [1,k] \, , \quad  | (x_{m_k} + \eps \nu_{k+1} - u \,v_{k+1}) -  (x_j - u\,  v_j) |  > \eps \, .
\end{aligned}
\right.
\end{equation}
Moreover after the time~$\delta$, the~$k+1$ particles are in a good configuration
\begin{equation}
\label{taugeqdelta2precoll}
\forall u \in [\delta ,t]  \, , \quad \left\{ \begin{aligned}
& (X_k - u V_k   , V_k , x_{m_k} + \eps \nu_{k+1} - u \, v_{k+1} , v_{k+1}) \in \cG_{k+1} (\eps_0/2),\\
& (X^0_k - u V_k   , V_k ,  x^0_{m_k} - u \, v_{k+1} , v_{k+1}) \in \cG_{k+1} (\eps_0)\, .
\end{aligned}
\right.
\end{equation}

$ \bullet$ For a post-collisional configuration~$\nu_{k+1} \cdot (v_{k+1} - v_{m_k}) > 0$ then 
the velocities are updated 
\begin{equation}\label{taugeq0precoll}
\forall u \in ]0,t]  \, , \quad \left\{ 
\begin{aligned}
&\forall i \neq j \in [1,k] \setminus \{m_k\} \, , \quad  | (x_i - u \, v_i) - ( x_j - u \, v_j ) | > \eps \, ,\\
& \forall j \in [1,k] \setminus \{m_k\} \, , \quad | (x_{m_k} + \eps \nu_{k+1} - u \, v_{k+1}' )- ( x_j - u \, v_j )|  > \eps \, ,\\
&\forall j \in [1,k] \setminus \{m_k\} \, , \quad | (x_{m_k}  - u \, v_{m_k}' )- (x_j - u \,  v_j )|  > \eps \, ,\\
&
| (x_{m_k}  - u \, v_{m_k}') - ( x_{m_k} + \eps \nu_{k+1} - u \, v_{k+1}')| > \eps 
\, .
\end{aligned}
\right.
\end{equation}
Moreover after the time~$\delta$, the~$k+1$ particles are in a good configuration
\begin{align}\label{taugeqdelta2precoll}
&\forall u\in [\delta, t], \nonumber \\
&\left\{ \begin{aligned}
& \big( \{ x_{j} - u \, v_{j}, v_j  \}_{j \not = m_k}, x_{m_k} - u \, v_{m_k}' ,v_{m_k}',  x_{m_k} + \eps \nu_{k+1} - u \, v_{k+1}' , v_{k+1}' \big) \in \cG_{k+1} (\eps_0/2),\\
& \big( \{  x^0_{j} - u \, v_{j}, v_j  \}_{j \not = m_k},  x^0_{m_k}- u \, v_{m_k}' ,v_{m_k}',  
 x^0_{m_k}  - u \, v_{k+1}' , v_{k+1}'\big) 
\in \cG_{k+1} (\eps_0)\, .
\end{aligned}
\right.
\end{align}
\end{Prop}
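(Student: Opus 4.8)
The plan is to establish the measure bound on $B_{m_k}(Z_k^0)$ first, since the geometric stability statements \eqref{taugeq0precoll}--\eqref{taugeqdelta2precoll} will follow almost by construction once $B_{m_k}(Z_k^0)$ is defined to contain exactly those parameters that could create trouble. I would define $B_{m_k}(Z_k^0)$ as the union of three pieces, one for each of the three terms in $\gamma(\eps,\eps_0)$: (i) the parameters $(\nu_{k+1},v_{k+1})$ for which the newly adjoined particle at $x_{m_k}+\eps\nu_{k+1}$ (or its post-collisional images) fails to separate from particle $m_k$ within time $\delta$; (ii) the parameters for which the new particle, after being launched, recollides with one of the existing $k$ particles within time $T$; and (iii) the parameters for which the new particle fails to land in the good set $\cG_{k+1}(\eps_0)$ in the ideal (Boltzmann) configuration. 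I would estimate each piece by an explicit cylinder/annulus computation in the $(\nu_{k+1},v_{k+1})$-variables.

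The core geometric estimates are elementary volume bounds. For (i): since $Z_k^0\in\cG_k(\eps_0)$ the particle $m_k$ travels (backward) in a straight line, and the relative displacement of $k+1$ with respect to $m_k$ is, at time $u$, essentially $\eps\nu_{k+1} - u(v_{k+1}-v_{m_k})$ in the pre-collisional case (and the analogous scattered expressions post-collisionally); requiring this to stay $\le\eps$ only on a time window of length $\delta$ confines $v_{k+1}-v_{m_k}$ to a set of measure $O((\eps/(R\delta))^{d-1})$ after integrating out the angle, whence the term $(\eps_0/(R\delta))^{d-1}$ (using $\eps_0\gg\eps|\log\eps|$ to absorb the position uncertainty $|X_k-X_k^0|\le\eps|\log\eps|$). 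For (ii): for each of the $k$ target particles $j$, a recollision of $k+1$ with $j$ within time $T$ forces the relative initial data into a tube of radius $\sim\eps_0$ and length $\sim RT$ around a line through velocity space, giving a contribution $O(k(RT)^d\eps_0^{d-1})$ after integration — here the $\eps_0$ rather than $\eps$ appears because one must control the ideal configuration $Z_k^0$ with its $\eps_0$-margin, and the factor $|\log\eps|$ comes from slicing the time integral (equivalently from the logarithmically many scales needed to cover the window $[\delta,T]$). For (iii): landing inside $\cB_{\eps_0}^{k+1,-}$ rather than outside costs, by the already-established measure bound on the bad sets together with a further $\eps/\eps_0$ loss when transferring the $\eps$-thin constraints of the real dynamics to the $\eps_0$-thick ones of the ideal dynamics, the term $(\eps/\eps_0)^{d-1}$. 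Summing, $|B_{m_k}(Z_k^0)|\le CkR^d\gamma(\eps,\eps_0)$.

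With $B_{m_k}(Z_k^0)$ so defined, the stability statements are nearly tautological: for $(\nu_{k+1},v_{k+1})\notin B_{m_k}(Z_k^0)$, piece (i) guarantees that the real configuration $Z_{k+1}$ has the two freshly adjoined particles separating and that \eqref{taugeq0precoll} (non-overlap for $u\in]0,t]$) holds — the $[1,k]$ pairs never recollide because $Z_k^0\in\cG_k(\eps_0)$ and $|X_k-X_k^0|\le\eps|\log\eps|\ll\eps_0$, and the pairs involving $k+1$ are controlled precisely by excluding piece (i); piece (iii) (and piece (ii), which propagates the good property up to time $t$) then gives membership in $\cG_{k+1}(\eps_0)$ for the ideal configuration and in $\cG_{k+1}(\eps_0/2)$ for the real one after time $\delta$, i.e. \eqref{taugeqdelta2precoll}. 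The post-collisional case is handled identically after applying the scattering map \eqref{V-def}, noting that scattering is measure-preserving on ${\mathbb S}^{d-1}\times B_R$ so the measure estimate is unaffected, and that the roles of $v_{m_k},v_{k+1}$ are simply replaced by $v_{m_k}',v_{k+1}'$ in the tube computations.

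I expect the main obstacle to be the bookkeeping in piece (ii): one must simultaneously (a) track all $k$ possible recollision partners and the resulting factor $k$, (b) ensure the time integration over $[\delta,T]$ produces only a logarithmic loss rather than a polynomial one, which requires a dyadic decomposition of the window and a uniform tube estimate on each scale, and (c) correctly account for the fact that the \emph{real} dynamics ${\bf\Psi}_k$ on $\cD_\eps^k$ may differ from the free flow for the just-adjoined particle before time $\delta$ — this is why one first waits the time $\delta$ and only then claims membership in the good sets. Reconciling the $\eps$-scale of the true hard-sphere exclusion with the $\eps_0$-scale of the good-set definition, under the separation of scales \eqref{sizeofparameters}, is the recurring technical point; once the hierarchy $|\log\eps|\eps\ll\eps_0\ll\delta R$ is in force, each transfer between scales costs a controlled power of $\eps/\eps_0$ or $\eps_0/(R\delta)$ and the three error terms in $\gamma$ emerge cleanly. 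Since this is exactly the content of Proposition~\ref{geometric-prop} as established in \cite{GSRT}, I would, for the present review, state the decomposition of $B_{m_k}(Z_k^0)$ explicitly and carry out the three volume estimates, referring to \cite{GSRT} for the verification that these exhaust all recollision scenarios.
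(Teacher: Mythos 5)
Your overall strategy -- exclude, for the fixed configuration $Z_k^0$, a small set of parameters $(\nu_{k+1},v_{k+1})$ built from pairwise tube/cone estimates against each of the $k$ existing particles, wait a time $\delta$ for the new particle to leave its parent, and use the scale separation \eqref{sizeofparameters} -- is the same as the paper's, which simply applies the two-body free-flow Lemma~\ref{geometric-lem1} (the sets $K$ and $K_\delta$) to each pair and takes the union over $j\leq k$. However, your three-way decomposition misattributes which mechanism produces which term of $\gamma(\eps,\eps_0)$, and in one place the proposed mechanism would not yield the statement. The term $(\eps/\eps_0)^{d-1}$ is the aiming-cone measure for a genuine $\eps$-recollision of the added particle with a particle lying at distance $\geq\eps_0$ (the set $K$); it is not a ``transfer of $\eps$-thin constraints to $\eps_0$-thick ones''. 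The term $(RT)^d\eps_0^{d-1}$ comes from the periodic images on the torus when one asks the new particle to stay at distance $\eps_0$ (not $\eps$) from the others on the long window, i.e.\ it belongs to the good-set requirement \eqref{taugeqdelta2precoll}, not to the recollision count; note also that a single tube of radius $\eps_0$ and length $RT$ in velocity space has measure $O(RT\eps_0^{d-1})$, so the factor $(RT)^d$ cannot come from the tube itself but from the number of images. Only your piece (i), giving $(\eps_0/(R\delta))^{d-1}$ for the separation from the parent after time $\delta$, matches the actual origin.

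The genuinely problematic step is your estimate of piece (iii): bounding the set of $(\nu_{k+1},v_{k+1})$ for which the limiting configuration fails to land in $\cG_{k+1}(\eps_0)$ by invoking ``the already-established measure bound on the bad sets'' $\cB_{\eps_0}$. That bound is a phase-space volume; a Fubini-type argument over it only controls the bad parameters for \emph{almost every} $Z_k^0$, whereas the Proposition requires a bound valid for \emph{every} $Z_k^0\in\cG_k(\eps_0)$, uniformly. One must instead redo the cone computation relative to the fixed points $x_j^0$ (this is exactly what $K_\delta$ in Lemma~\ref{geometric-lem1} provides, using $|x^0_{m_k}-x_j^0|\geq\eps_0$). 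Finally, two attributions of the $|\log\eps|$ factor need correction: it does not come from a dyadic slicing of the time window, nor is the post-collisional case ``unaffected because scattering is measure preserving''. Pulling the bad velocity sets back through the scattering map \eqref{V-def} degenerates at grazing impact parameters, and controlling this (together with the position uncertainty $\eps|\log\eps|$ it forces in \eqref{eq: example}) is precisely the source of the extra $|\log\eps|$ in \eqref{pathological-size} that the paper points out is absent from \cite{GSRT}. With piece (iii) replaced by the pointwise cone estimate and the scattering loss accounted for, your plan reduces to the paper's argument.
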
 

 We refer to \cite{GSRT} for a complete proof of Proposition \ref{geometric-prop} and simply recall that it can be obtained from  the following control on free trajectories (note that compared to~\cite{GSRT} there is an additional loss of a $|\log \eps|$ which is due to the action of the scattering operator and is actually missing in~\cite{GSRT}).  
\begin{Lem}\label{geometric-lem1}
  Given~$T>0$,   $ \eps\ll \delta\ll 1$ and $ \eps |\log \eps |  \ll \eps_0\ll  \min(\delta R, 1)$,
   consider two points~$x^0_1,x^0_2$ in~$\T^d$ such that $|x^0_1 - x^0_2|\geq \eps_0$,  and a velocity~$ v_1 \in B_R$.
 Then there exists a subset~$K(x^0_1- x^0_2, \eps_0, \eps) $ of~$\R^d$   
 with  measure bounded by
 $$ |K( x^0_1- x^0_2, \eps_0, \eps) | \leq  CR^d  |\log \eps | \left(  \left(\frac{ \eps}{\eps_0}\right)^{d-1} +  (R t  )^d \,   { \eps} ^{d-1}\right) $$
 and a subset $K_\delta( x^0_1- x^0_2, \eps_0,\eps) $ of~$\R^d$, the measure of which satisfies
 $$  
 |K_\delta(x^0_1- x^0_2, \eps_0, \eps) | \leq CR^d |\log \eps | 
  \left(  
  \left(\frac{\eps_0 }{R\delta}\right)^{d-1} 
  +  (R t  )^d  \eps_0  ^{d-1}
 \right) 
 $$
 such that for any~$ v_2\in B_R$ and~$x_1, x_2$ such that $|x_1 - x^0_1| \leq |\log \eps | \eps$, $|x_2 - x^0_2| \leq |\log \eps | \eps$, the following results hold :

\noindent
 $ \bullet $ $ $ If   $v_1 -v_2 \not \in K( x^0_1- x^0_2, \eps_0, \eps) $, then 
  $$
   \forall u \in [0,t]  \,, \quad |(x_1-  u \, v_1) -  (x _2 - u \, v_2)| > \eps  .
   $$
 $ \bullet $ $ $ If  $v_1 -v_2 \notin K_\delta( x^0_1- x^0_2, \eps_0,  \eps) $
   $$
   \forall u \in  [\delta, t]   \,, \quad | ( x_1- u \, v_1) - ( x _2- u \, v_2) | >\eps_0 \, .
 $$
 \end{Lem}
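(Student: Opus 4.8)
The plan is to reduce everything to an elementary calculation about the set of relative velocities $w := v_1 - v_2$ for which the segment $u \mapsto x_1 - x_2 - u w$ (or rather its lift to $\R^d$) passes within $\eps$, resp.\ stays below $\eps_0$, of the origin during the relevant time interval. The key observation is that the nominal relative position $y^0 := x^0_1 - x^0_2$ satisfies $|y^0| \geq \eps_0$, while the true relative positions differ from $y^0$ by at most $2|\log\eps|\,\eps \ll \eps_0$; so the geometry is governed by $y^0$ up to a harmless perturbation, and it suffices to bound, for a given direction $y^0/|y^0|$ and a given $v_1 \in B_R$, the measure of the ``bad'' set of $w$.

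First I would set up the geometry on the torus. One covers $\T^d$ by finitely many balls of radius, say, $1/4$, so that on the time horizon $[0,t]$ (with $|w| \le 2R$) the trajectory can wind around only a bounded number $N_{\mathrm{wind}} = N_{\mathrm{wind}}(R,t,d)$ of times; each winding contributes one ``copy'' of the straight-line problem in $\R^d$. (This is where the factor $(Rt)^d$ will come from — the number of lattice translates of $\T^d$ the segment can visit is $\lesssim (Rt)^d$.) For each such copy one is reduced to the Euclidean question: given $a \in \R^d$ with $|a| \geq \eps_0/2$ and $v_1 \in B_R$, bound $\{\, w \in B_{2R} : \exists u \in [0,t],\ |a - uw| \leq \eps \,\}$ and, for the second statement, $\{\, w : \exists u \in [\delta,t],\ |a - uw| \leq \eps_0 \,\}$.

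For the first set: if $|a - uw| \leq \eps$ for some $u$, then $w$ lies within distance $\eps/u$ of the ray $\{a/u\}$, i.e.\ within a thin cone/tube around the direction $a/|a|$. Splitting the time interval dyadically in $u$ and using $|a| \ge \eps_0/2$, one sees $u \gtrsim \eps_0/R$ is forced (for $u$ smaller the point $a - uw$ cannot reach within $\eps$ of the origin since $|a|$ is too large relative to $u|w|$ — this is exactly where $\eps_0 \gg \eps$ is used), so $\eps/u \lesssim R\eps/\eps_0$, and integrating the tube's cross-section over the logarithmically many dyadic scales yields the bound $\lesssim R^d |\log\eps|\,\big((\eps/\eps_0)^{d-1} + (Rt)^d \eps^{d-1}\big)$, the second term accounting for large $u$ (close to $t$) where the tube has radius $\sim \eps/t$ but there is no lower constraint, times the winding factor. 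For the second set one repeats the argument with $\eps$ replaced by $\eps_0$ and $u \in [\delta,t]$, so the tube radius is $\eps_0/u \le \eps_0/\delta$ for small $u$ and $\sim \eps_0/t$ for $u$ near $t$, giving $\lesssim R^d|\log\eps|\big((\eps_0/(R\delta))^{d-1} + (Rt)^d \eps_0^{d-1}\big)$. Finally one transfers back: since $|x_i - x^0_i| \le |\log\eps|\eps$, a genuine approach to within $\eps$ of the origin by $x_1 - x_2 - uw$ forces an approach to within $\eps + 2|\log\eps|\eps \le 2\eps$ of the origin by $a - uw$ with $a = y^0$ (up to relabelling constants), so enlarging $\eps$ by a constant absorbs the discrepancy; define $K$ and $K_\delta$ as the (constant-enlarged) bad sets just constructed.

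The main obstacle is honest bookkeeping of the torus winding: one must make sure the ``finitely many copies of the Euclidean problem'' decomposition is uniform in $x^0_1 - x^0_2$ and does not secretly cost more than the claimed $(Rt)^d$ factor, and that the perturbation $|x_i - x^0_i| \le |\log\eps|\eps$ never pushes a trajectory across the $\eps_0$-separation in an uncontrolled way. The origin of the extra $|\log\eps|$ (absent in \cite{GSRT}) is the dyadic-in-$u$ summation combined with the lowest admissible scale $u \sim \eps_0/R$ versus the largest $u \sim t$, which spans $\sim \log(Rt/\eps_0) \lesssim |\log\eps|$ octaves once $\eps_0$ is polynomially related to $\eps$; I would make this explicit rather than hide it in constants. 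Everything else is a routine volume estimate for tubular neighbourhoods of line segments.
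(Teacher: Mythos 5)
The paper itself does not prove this lemma (it defers to \cite{GSRT} and only remarks that an extra $|\log\eps|$ appears ``due to the action of the scattering operator''), so the benchmark is the standard argument there: pass to the relative velocity $w=v_1-v_2$, observe that the bad $w$ for a given periodic image $a$ of $x_1^0-x_2^0$ lie in a cone of half-aperture (tube radius)/$|a|$ intersected with $B_{2R}$, and count the $O((Rt)^d)$ images reachable within time $t$. Your skeleton is exactly this, and your treatment of $K_\delta$ (tube of radius $\eps_0/\delta$ around the ray, measure $R\,(\eps_0/\delta)^{d-1}=R^d(\eps_0/(R\delta))^{d-1}$, plus the image count) is fine, since there the perturbation $2\eps|\log\eps|\ll\eps_0$ really is a constant-factor enlargement of the threshold.

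The genuine flaw is in the transfer step for $K$: you write that an approach of $x_1-x_2-uw$ to within $\eps$ forces an approach of $x_1^0-x_2^0-uw$ to within $\eps+2|\log\eps|\eps\le 2\eps$, ``so enlarging $\eps$ by a constant absorbs the discrepancy''. That inequality is false: $\eps+2\eps|\log\eps|\sim 2\eps|\log\eps|$, so the tube radius must be taken of order $\eps|\log\eps|$, not $O(\eps)$. Since $K$ must work for \emph{all} $x_1,x_2$ in the $\eps|\log\eps|$-neighbourhoods, this enlargement is unavoidable, and correctly propagated it turns the first term into $|\log\eps|^{d-1}(\eps/\eps_0)^{d-1}$ (and similarly for the image term); this is precisely the loss the paper attributes to the scattering-induced position shifts accumulated over up to $|\log\eps|$ adjunctions, and it is the only place where the statement differs from \cite{GSRT}. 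Relatedly, your claimed source of the $|\log\eps|$ --- the dyadic-in-$u$ summation --- is spurious: the dyadic block volumes scale like $u_{\min}^{-d}$, so the sum is geometric and dominated by the smallest admissible scale $u\sim\eps_0/R$, giving no logarithm (indeed the dyadic decomposition is unnecessary, as the union of balls $B(a/u,\eps'/u)$ over $u$ is already contained in a single cone of aperture $\eps'/|a|$). A smaller bookkeeping slip: the $(Rt)^d\eps^{d-1}$ term comes from the $O((Rt)^d)$ nontrivial periodic images, each at distance bounded below by a constant, not from ``large $u$ near $t$ with no lower constraint''. As written, your argument would reproduce the log-free bound of \cite{GSRT}, which is not available under the weaker hypothesis $|x_i-x_i^0|\le\eps|\log\eps|$; fixing the transfer step (and accepting the resulting $|\log\eps|^{d-1}$, which coincides with the displayed single power only when $d=2$ and is in any case harmless for the application) repairs the proof.
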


 \bigskip
 \noindent Proposition \ref{geometric-prop} is the elementary step for adding a new particle. 
This step can be iterated 
in order to build inductively good pseudo-trajectories~$Z$ and $Z^0$.
Note that after adding a new particle,   velocities remain identical at each time in both configurations, but their positions differ due the exclusion condition in the BBGKY hierarchy which induces a shift of~$\eps$ at each creation of a new particle.

To estimate $Q_{n,n+s} (t)f^{(n+s)} _{N,0}  -Q^0_{n,n+s} (t)f^{(n+s)} _{0}$, we then split the integration domain in several pieces
\begin{itemize}
\item pseudo-trajectories with large energy $ H_{n+s} (Z_{n+s} ) \geq R^2 \gg 1$;
\item pseudo-trajectories  with collisions separated by less than a time $\delta \ll 1$;
\item pseudo-trajectories (with moderate energy and  collisions well separated in time) having recollisions;
\item good pseudo-trajectories in the sense of Proposition \ref{geometric-prop}.
\end{itemize}
Bad pseudo-trajectories have a small contribution to the integrals thanks to~(\ref{pathological-size})
while good pseudo-trajectories  of the BBGKY and Boltzmann hierarchies can be coupled.

\medskip

\subsection{Convergence of initial data}
To estimate the contribution of good pseudo-trajectories, we have then to combine the continuity of $f_0^{(n+s)}$ together with an estimate on the difference~$f^{(n+s)} _{N,0} - f^{(n+s)} _{0}$  between initial data  on the set of initial configurations which may be reached by such  pseudo-dynamics: since  we only   consider pseudo-trajectories leading to good configurations, what  we need to compute is~$(f_{N,0}^{(s)} - f_0^{\otimes s} ) (1- \indc_{\cB_{\eps} ^{s+} \cap \cB_{\eps} ^{s-}} )$. 

With the specific choice of initial data  (\ref{eq: initial data 2}) in Theorem {\rm \ref{lanford-thm}}, one can prove (see \cite{GSRT} for instance)  that  the initial data of both hierarchies are close, in the sense that
for $s \geq 2$
\begin{equation}\label{closedata}
\Big| (f_{N,0}^{(s)} - f_0^{\otimes s} ) (1- \indc_{\cB_{\eps} ^{s+} \cap \cB_{\eps} ^{s-}} ) \Big| 
\leq  C^s \exp (-\beta H_s) \eps   \,.
\end{equation}
{This condition implies that  $f_{N,0}^{(s)}$ is almost chaotic on the singular sets $\cB_{\eps} ^{s+} \setminus \cB_{\eps} ^{s-} $ (which are relevant for the forward equation) and~$\cB_{\eps} ^{s-} \setminus \cB_{\eps} ^{s+}$
(which are relevant for the backward equation).}
Note that, compared to Definition {\rm\ref{def: chaos}}, this is much stronger as it provides a quantitative description of the factorization {on sets depending on $\eps$.}

\medskip

It remains to gather all error estimates and to use the continuity property (\ref{continuity2}) for the 
operators~$Q_{n,s+n}$. 
We define the weighted norm 
$$
\|f_n\|_{L^\infty_{\beta,n}}:=  \|f_n  \exp (\beta H_n  )  \|_{L^\infty} \, ,
$$
with $H_n$ the Hamiltonian \eqref{eq: Hamilton}.
Fixing the parameters~$ \eps_0, \delta,s,n$  such that
$$
|\log \eps | \eps \ll \eps_0 \ll \min(\delta R,1)    \, ,\quad n+s \leq |\log \eps |\,,
$$
and choosing~$R \leq C |\log \eps|$, 
 the error term  $\gamma(\eps, \eps_0)$ from Proposition~\ref{geometric-prop} converges to 0. }  
The term by term convergence is then obtained from the following estimate, thanks to the previous analysis and~(\ref{closedata}).  
 \begin{Prop}\label{error-est}
 There is~$\gamma>0$ such that
$$\begin{aligned}
&\left\| \left( Q_{n,n+s}(t) f^{(n+s)} _{N,0}  -Q^0_{n,n+s}(t) f^{(n+s)} _{0} \right) (1- \indc _{\cB_{\eps_0}^{n-} })    \right\|_{L^\infty_{\beta',n}}\\
&  \qquad    \leq   C^{n+s} \left({  t\beta^{-(d+1)/2} \over \beta- \beta'} \right)^ s
\Big[ \Big( \exp (- (\beta-\beta') R^2/4) + (n+s)^2 {\delta \over t} +  \gamma(\eps, \eps_0)   \Big) \| f^{(n+s)} _{N,0}   \|_{L^\infty_{\beta,n+s}}\\
& \qquad\qquad\qquad\qquad\qquad \qquad +    \big\| \big(f^{(n+s)} _{N,0} - f^{(n+s)} _{0} \big)  (1- \indc _{\cB_{\eps_0}^{(n+s)-} })  \big\|_{L^\infty_{\beta,n+s}} \\
& \qquad \qquad\qquad\qquad\qquad\qquad+     \|    f^{(n+s)} _{0}  \exp (\beta H_{n+s} )  \| _{W^{1,\infty}_x(L^\infty_v)}  (n+s) \eps  \Big]\,,
\end{aligned} 
$$
with~$\beta'<\beta$.
\end{Prop}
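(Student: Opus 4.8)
The plan is to start from the pseudo-trajectory representations of $Q_{n,n+s}(t)f^{(n+s)}_{N,0}$ and $Q^0_{n,n+s}(t)f^{(n+s)}_0$ associated with Definition~\ref{pseudotrajectory}, and to decompose the integral over the parameters $(T_{n+1,n+s},\Omega_{n+1,n+s},V_{n+1,n+s})$ into four regions --- high energy, clustered collision times, trajectories with recollisions, and good trajectories --- estimating each contribution separately and, in every case, reducing the time/angle/velocity integration to the Cauchy--Kowalevski bound \eqref{continuity2}, which supplies the common prefactor $C^{n+s}\big(t\beta^{-(d+1)/2}/(\beta-\beta')\big)^s$. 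Throughout, the restriction of the base configuration $Z_n$ to the complement of $\cB^{n-}_{\eps_0}$ guarantees, by the discussion following \eqref{eq: bad set}, that the $n$--particle backward flow ${\bf\Psi}_n$ on $\cD^n_\eps$ coincides with the free flow ${\bf\Psi}^0_n$; this is the base case for the inductive construction of a coupled pair of pseudo-trajectories $Z$, $Z^0$ of the two hierarchies.

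On the region where the terminal energy satisfies $H_{n+s}(Z_{n+s})\geq R^2$, I bound $f^{(n+s)}_{N,0}$ (and $f^{(n+s)}_0$) by the corresponding weighted norm times $\exp(-\beta H_{n+s})$ and split the Gaussian weight into three pieces: one to absorb the collision cross-sections $\prod_{i}\big((v_i-v_{a(i)}(t_i))\cdot\nu_i\big)$, one to feed \eqref{continuity2}, and one that on $\{H_{n+s}\geq R^2\}$ is bounded by $\exp(-(\beta-\beta')R^2/4)$. This produces the first term $\exp(-(\beta-\beta')R^2/4)\|f^{(n+s)}_{N,0}\|_{L^\infty_{\beta,n+s}}$ with the stated prefactor, and the same on the Boltzmann side. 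Restricting from now on to $H_{n+s}\leq R^2$, I next remove the set of collision times for which two successive times among $t:=t_n\geq t_{n+1}\geq\cdots\geq t_{n+s}\geq 0$ are closer than $\delta$; its measure inside $\cT_{n+1,n+s}$ is $O\big((n+s)^2\delta\, t^{s-1}/(s-1)!\big)$, which against \eqref{continuity2} contributes the factor $(n+s)^2\delta/t$, again on both sides.

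On the remaining parameters --- moderate energy, collisions well separated in time --- I build $Z$ and $Z^0$ inductively from the common datum $Z_n\notin\cB^{n-}_{\eps_0}$. At each adjunction $k\mapsto k+1$ with $a(k+1)=m_k$, Proposition~\ref{geometric-prop} shows that choosing $(\nu_{k+1},v_{k+1})$ outside the set $B_{m_k}(Z^0_k)$, of measure $\leq C(n+k)R^d\gamma(\eps,\eps_0)$ by \eqref{pathological-size}, simultaneously rules out every recollision of the BBGKY pseudo-flow and keeps the two configurations coupled: equal velocities at all times, positions within the accumulated shift $\leq (k+1-n)\eps\leq(n+s)\eps$ (consistent with the $|\log\eps|\eps$ tolerance because $n+s\leq|\log\eps|$), and both back in the good sets after the waiting time $\delta$ made available by the previous step. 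Summing the measures $|B_{m_k}(Z^0_k)|$ over $k=n,\dots,n+s-1$ and over the trees $a\in\cA_{n,n+s}$, and absorbing all polynomial and combinatorial factors into $C^{n+s}$, the contribution of the recollision region is at most $C^{n+s}\big(t\beta^{-(d+1)/2}/(\beta-\beta')\big)^s\gamma(\eps,\eps_0)\|f^{(n+s)}_{N,0}\|_{L^\infty_{\beta,n+s}}$.

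Finally, on the good region the two pseudo-trajectories reach time $0$ with equal velocities, $|X_{n+s}-X^0_{n+s}|\leq(n+s)\eps$, and $Z_{n+s}$ outside $\cB^{(n+s)-}_{\eps_0}$ (the good-set propagation of Proposition~\ref{geometric-prop}, applied at the scale $2\eps_0$, being immaterial by \eqref{sizeofparameters}). I then write
\[
f^{(n+s)}_{N,0}(Z_{n+s})-f^{(n+s)}_0(Z^0_{n+s})=\big(f^{(n+s)}_{N,0}-f^{(n+s)}_0\big)(Z_{n+s})+\big(f^{(n+s)}_0(Z_{n+s})-f^{(n+s)}_0(Z^0_{n+s})\big),
\]
bounding the first term by $\big\|(f^{(n+s)}_{N,0}-f^{(n+s)}_0)(1-\indc_{\cB^{(n+s)-}_{\eps_0}})\big\|_{L^\infty_{\beta,n+s}}\exp(-\beta H_{n+s})$, since $Z_{n+s}$ avoids $\cB^{(n+s)-}_{\eps_0}$, and the second, through the $W^{1,\infty}_x(L^\infty_v)$ norm and the position gap, by $\|f^{(n+s)}_0\exp(\beta H_{n+s})\|_{W^{1,\infty}_x(L^\infty_v)}(n+s)\eps\exp(-\beta H_{n+s})$; one last application of \eqref{continuity2} integrates the surviving Gaussian against the cross-sections and gives the prefactor. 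Adding the four contributions yields exactly the claimed inequality. \textbf{The main difficulty} is the third step: checking that discarding the adjunction sets $B_{m_k}(Z^0_k)$ removes \emph{every} recollision of the BBGKY pseudo-flow --- including recollisions of two uncreated particles, or of particles adjoined at widely separated times --- and that the inductive bookkeeping of the scale $\eps_0$ and of the $\delta$--delays is consistent. This is precisely the content of Proposition~\ref{geometric-prop} and Lemma~\ref{geometric-lem1}, used here as black boxes, so the remaining work is the combinatorial summation over collision trees under the scalings $n+s\leq|\log\eps|$, $R\leq C|\log\eps|$ and \eqref{sizeofparameters}, which are what force $\gamma(\eps,\eps_0)\to0$.
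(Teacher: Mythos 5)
Your proposal is correct and takes essentially the same route as the paper: the same four-region decomposition (high energy $H_{n+s}\geq R^2$, collision times closer than $\delta$, recollisions removed via the bad adjunction sets of Proposition~\ref{geometric-prop} of measure $\gamma(\eps,\eps_0)$, and coupled good pseudo-trajectories), with the Cauchy--Kowalevsky bound \eqref{continuity2} supplying the prefactor and the initial-data difference split into the term outside $\cB_{\eps_0}^{(n+s)-}$ plus the Lipschitz term of size $(n+s)\eps$. The paper itself only sketches these steps (deferring the detailed geometric and combinatorial estimates to \cite{GSRT}), and your remark on the $\eps_0$ versus $\eps_0/2$ bookkeeping is the standard, harmless adjustment.
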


\subsection{A refined convergence statement}

The previous argument shows that once   recollisions have been discarded, pseudo-trajectories are stable as $\eps \to 0$, in the sense that their distance to the corresponding Boltzmann pseudo-trajectory converges to 0.
 The only assumptions used to obtain the convergence of the marginals for times $t\in [0, t^*]$  are  that the initial data~$f_0$ has some regularity in space (the Lipschitz bound appearing in~ Proposition~\ref{error-est} could be weakened to H\"older continuity) and the initial marginals satisfy the uniform growth condition
 \begin{equation}
 \label{growth}
  \sup _N f_{N,0} ^{(n+s)} \leq C^{n+s}  \exp (-\beta H_{n+s})
  \end{equation}
together with  the convergence
$$\Big| (f_{N,0} ^{(n+s)} - f^{\otimes(n+s)}_0 ) (1- \indc _{\cB_{\eps_0} ^{(n+s)-}}) \Big| \leq     C^{n+s} \exp (-\beta H_{n+s}) \eps   \,.
$$
Actually any positive power of~$\eps$ would do in the above estimate.
Note   that not all configurations in $ \cD_\eps^{n+s} \setminus \cB_{\eps_0} ^{(n+s)-}$  are reached by the good pseudo-trajectories. Actually   a very small subset $V_{n+s,n}^+\subset \cD_\eps^{n+s} \setminus \cB_{\eps_0} ^{(n+s)-}$ of these configurations can be reached since one has the condition that, looking at the forward flow, if one particle disappears at each collision, we should end up with $n$ particles within a time $T$ (see Figure \ref{admissible-fig}). This imposes~$s$ conditions on the configuration $Z_{n+s}$. Note that, by definition, configurations of $V^+_{n+s,n}$   have at least one collision when evolved by the free flow ${\bf \Psi}_{n+s}$. Taking into account the additional constraint on the order of collisions, we can prove the following result.
\begin{Prop}\label{V-prop}
The set of admissible initial configurations (reached by pseudo-dynamics associated with the forward BBGKY hierarchy) satisfies
$$ | V_{n+s,n}^+ | \leq  (CR)^{s+n}(n+s-1)\dots n (RT \eps ^{(d-1)} )^s\,.$$
Furthermore,
$$ V_{n+s,n}^+ \subset  \cB_{\eps} ^{(n+s)+}\setminus  \cB_{\eps} ^{(n+s)-} \,.$$
\end{Prop}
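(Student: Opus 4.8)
The plan is to establish the two assertions separately. The inclusion $V_{n+s,n}^+\subset\cB_\eps^{(n+s)+}\setminus\cB_\eps^{(n+s)-}$ is essentially definitional, so I would dispose of it first; the measure bound rests on a change of variables in the pseudo-trajectory map and is where the work lies. Throughout I take $s\geq1$ (for $s=0$ there are no collision constraints and nothing to prove) and restrict, as everywhere in Lanford's argument, to the part of phase space where all velocities stay bounded by a fixed multiple of $R$.

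\emph{Inclusion.} By construction $V_{n+s,n}^+\subset\cD_\eps^{n+s}\setminus\cB_{\eps_0}^{(n+s)-}$, and since $\eps\leq\eps_0$ the ordering $\cB_\eps^{(n+s)-}\subset\cB_{\eps_0}^{(n+s)-}$ from the previous Proposition forces $V_{n+s,n}^+\cap\cB_\eps^{(n+s)-}=\emptyset$. For the membership in $\cB_\eps^{(n+s)+}$ I would fix $Z_{n+s}\in V_{n+s,n}^+$ together with a good pseudo-trajectory realizing it, with tree $a$ and collision times $0\leq t_{n+s}\leq\dots\leq t_{n+1}\leq t$. On $[0,t_{n+s}]$ all $n+s$ particles are present and, the pseudo-trajectory being recollision-free, follow the free flow, so $x_i(\tau)=x_i+\tau v_i$ for every $i$ and $\tau\in[0,t_{n+s}]$; since $t_{n+s}$ is the smallest adjunction time, the adjunction condition of particle $n+s$ onto $a(n+s)$ at $\tau=t_{n+s}$ reads $|x_{n+s}-x_{a(n+s)}+t_{n+s}(v_{n+s}-v_{a(n+s)})|=\eps$, which, as $t_{n+s}\leq t\leq T$, witnesses $Z_{n+s}\in\cB_\eps^{(n+s)+}$ with $u=t_{n+s}$ and the pair $(n+s,a(n+s))$.

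\emph{Measure bound.} The key observation is that $V_{n+s,n}^+$ is, tree by tree, contained in the image of the pseudo-trajectory map
\[
\Phi_a:\ (Z_n,T_{n+1,n+s},\Omega_{n+1,n+s},V_{n+1,n+s})\ \longmapsto\ Z_{n+s}(a,Z_n,T_{n+1,n+s},\Omega_{n+1,n+s},V_{n+1,n+s},0),
\]
whose domain — $Z_n$ ranging over $(\T^d\times B_R)^n$, the times over the simplex $\{0\leq t_{n+s}\leq\dots\leq t_{n+1}\leq T\}$, the angles over $({\mathbb S}^{d-1})^s$ and the added velocities over $B_R^s$ — has the same dimension $2d(n+s)$ as the target $(\T^d\times\R^d)^{n+s}$. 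The area formula then bounds $|V_{n+s,n}^+|$ by $\sum_{a\in\cA_{n,n+s}}\int|\det D\Phi_a|$, the integral being over the domain just described. Writing $\Phi_a$ as a composition of free transports (volume preserving) and the $s$ adjunction maps, and checking that the $i$-th adjunction contributes a Jacobian equal to $\eps^{d-1}\,|(v_i-v_{a(i)}(t_i))\cdot\nu_i|$ — the factor $\eps^{d-1}$ because particle $i$ is placed on the sphere of radius $\eps$ around $a(i)$ — one gets $|\det D\Phi_a|\leq(CR\eps^{d-1})^s$ on the region where all velocities are $O(R)$. Bounding the domain measure by $(CR^d)^n$ for the base points, $(CR^d)^s$ for the angles and added velocities, and $T^s$ for the time simplex, and using $|\cA_{n,n+s}|\leq n(n+1)\cdots(n+s-1)$, everything multiplies to
\[
|V_{n+s,n}^+|\ \leq\ (CR)^{s+n}\,(n+s-1)\cdots n\,(RT\eps^{d-1})^s
\]
after the dimensional constants are absorbed into $C$.

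The step I expect to be the main obstacle is the Jacobian identity $|\det D\Phi_a|=\prod_{i=n+1}^{n+s}\eps^{d-1}\,|(v_i-v_{a(i)}(t_i))\cdot\nu_i|$: unwinding the definition of the pseudo-trajectory — in particular keeping track of the specular velocity jump that adjoining each later particle imposes on its parent, and of which particle is transported freely on which sub-interval — is needed to see that the composition of free transports and adjunction maps has precisely this product Jacobian. This is, however, exactly the change of variables dual to the weight $\prod_i\big((v_i-v_{a(i)}(t_i))\cdot\nu_i\big)$ already present in the series representation of $f_N^{(n)}$ above, together with the volume preservation of ${\bf S}_n$, so it can be imported from \cite{GSRT}; the remaining points — the ordering of collision times, the mild growth of velocities under collisions, the $d$-dependent constants — are routine bookkeeping absorbed into $C$.
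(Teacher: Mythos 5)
Your proposal is correct and follows essentially the approach the paper intends (it states Proposition~\ref{V-prop} without a written proof): the inclusion is definitional ($V^+_{n+s,n}\subset\cD_\eps^{n+s}\setminus\cB_{\eps_0}^{(n+s)-}$ together with the monotonicity $\cB_\eps^{(n+s)-}\subset\cB_{\eps_0}^{(n+s)-}$, plus the observation that the last adjunction yields a forward free-flow collision at time $t_{n+s}\leq T$), and your measure bound is exactly the tube/change-of-variables count dual to the collision weight $\eps^{d-1}\big((v_i-v_{a(i)})\cdot\nu_i\big)$ of Remark~\ref{collision-rmk}, summed over the trees $\cA_{n,n+s}$ and the time simplex. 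The only discrepancy is bookkeeping: your count gives $(CR^d)^{n+s}(RT\eps^{d-1})^s$ times the tree factor, consistent with the $(CR^d)^n$ convention of the preceding Proposition, versus the stated $(CR)^{s+n}$, which is immaterial to the use made of the estimate.
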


\begin{figure}[h]
\begin{center}
\scalebox{0.4}{\includegraphics{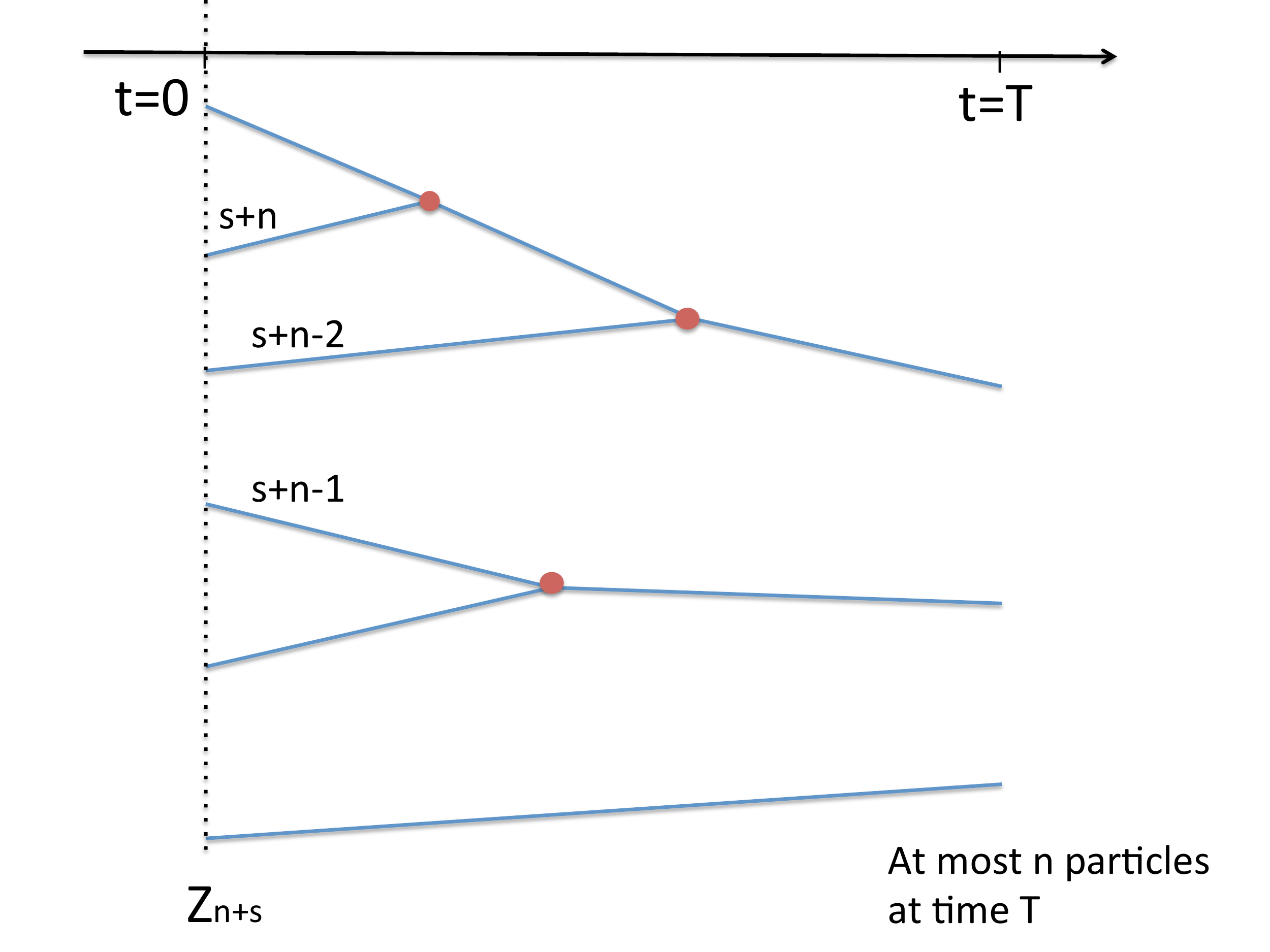}}
\caption{\label{admissible-fig} Admissible initial configurations.}
\end{center}
\end{figure}

We thus can state the following refined version of Lanford's theorem 
{\color{black} which provides quantitative convergence estimates outside the bad sets}.
\begin{Thm}
\label{lanford-thm2}
Consider a system of $N$ hard-spheres of diameter $\eps$ on $\T^d = [0,1]^d$ (with~$d\geq 2$), initially distributed according to some density $f_{N,0}$ satisfying the growth condition {\rm(\ref{growth})}
 for some $\beta>0$,
 together with the convergence
 \begin{equation}
 \label{init-cv}
\forall n \in [1,N] \, , \quad  \Big| (f_{N,0} ^{(n)} - f^{\otimes n}_0 ) \,  ( 1 - \indc_{ \cB_{\eps_0} ^{n-} }) \Big| 
\leq   C^{n} \exp (-\beta H_{n}) \eps^a\, , 
\end{equation}
 for some~$a>0$ and for~$|\log \eps | \eps \ll \eps_0 \ll 1$. Denote by $f$   the solution of the Boltzmann equation~{\rm(\ref{boltzmann})}.
Then,  in the Boltzmann-Grad limit~$N \to \infty$ with $N \eps^{d-1} = 1$, the marginal~$f_N^{(n)}$   converges  to $f^{\otimes n}$ uniformly on $(\cD_\eps^n \setminus \cB_{\eps_0} ^{n-} )\times [0,t^*] $, i.e.
there exists $\gamma' (\eps, \eps_0)$ converging to 0 such that uniformly in~$t \in [0,t^*]$, 
$$\Big| \big( f_{N} ^{(n)} (t) - f^{\otimes n} (t) \big) (1- \indc _{\cB_{\eps_0} ^{n-}})  \Big|     \leq  C^{n} \exp (-\beta' H_{n}) \gamma' (\eps, \eps_0) \,, 
$$ 
with~$  \beta'< \beta$ and  $t^*$ introduced in \eqref{eq: convergence}.
 \end{Thm}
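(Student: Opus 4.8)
The plan is to run Lanford's term-by-term argument, using the machinery already assembled in the excerpt, but keeping track of the bad sets $\cB_{\eps_0}^{n-}$ throughout so as to obtain uniform (rather than almost-everywhere) convergence. The starting point is the series representation \eqref{duhamel1}--\eqref{duhamel2} together with the Cauchy--Kowalevsky bound \eqref{continuity2} and the marginal bound \eqref{eq: convergence} (whose hypotheses are exactly the growth condition \eqref{growth}), which guarantee that both series converge absolutely and uniformly on $[0,t^*]$ with a geometrically decaying tail. Fixing $\delta$, $\eps_0$ and a truncation parameter $s_0\sim|\log\eps|$ subject to the scaling constraints \eqref{sizeofparameters} and $n+s\le|\log\eps|$, $R\le C|\log\eps|$, one discards the tail $\sum_{s>s_0}$ in both series at the cost of an error $C^n\sum_{s>s_0}(Ct^*/(\beta-\beta'))^s$, which is small once $t^*(\beta,\mu)$ is below the radius of analyticity.

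The core estimate is Proposition~\ref{error-est}, which compares $Q_{n,n+s}(t)f^{(n+s)}_{N,0}$ with $Q^0_{n,n+s}(t)f^{(n+s)}_0$ after multiplication by $(1-\indc_{\cB_{\eps_0}^{n-}})$: the right-hand side is a sum of a "bad pseudo-trajectory" term controlled by $\exp(-(\beta-\beta')R^2/4)+(n+s)^2\delta/t+\gamma(\eps,\eps_0)$, an "initial-data" term which is precisely $\|(f^{(n+s)}_{N,0}-f^{(n+s)}_0)(1-\indc_{\cB_{\eps_0}^{(n+s)-}})\|_{L^\infty_{\beta,n+s}}$, and a "continuity in space" term of size $(n+s)\eps$ coming from the $O(\eps|\log\eps|)$ position shifts and the Lipschitz regularity of $f_0^{(n+s)}=f_0^{\otimes(n+s)}$. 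Here is where the refinement over Theorem~\ref{lanford-thm} is used: because the good forward pseudo-trajectories reach only configurations in $V^+_{n+s,n}\subset\cB_\eps^{(n+s)+}\setminus\cB_\eps^{(n+s)-}$ (Proposition~\ref{V-prop}), the initial-data discrepancy is evaluated exactly where hypothesis \eqref{init-cv} gives control — namely outside $\cB_{\eps_0}^{(n+s)-}$, since $\cB_\eps^{(n+s)-}\subset\cB_{\eps_0}^{(n+s)-}$ by the ordering Proposition and $V^+_{n+s,n}$ avoids $\cB_\eps^{(n+s)-}$. Plugging \eqref{init-cv} (with its $\eps^a$, which "any positive power of $\eps$ would do") into Proposition~\ref{error-est} and summing the geometric series in $s$ produces, after collecting $\exp(-(\beta-\beta')R^2/4)$, $(n+s)^2\delta/t$, $\gamma(\eps,\eps_0)$, $\eps^a$ and $(n+s)\eps$, a single quantity $\gamma'(\eps,\eps_0)$ that tends to $0$ under the stated scaling, uniformly in $t\in[0,t^*]$; combined with the analogous bound for the Boltzmann-hierarchy tail and the fact that $f^{(n)}=f^{\otimes n}$ solves that hierarchy (uniqueness via \eqref{continuity2}), this gives the claimed estimate $|(f_N^{(n)}(t)-f^{\otimes n}(t))(1-\indc_{\cB_{\eps_0}^{n-}})|\le C^n\exp(-\beta'H_n)\gamma'(\eps,\eps_0)$.

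The case $t\le 0$ is obtained, as already noted in the excerpt, by the symmetry $(t,v)\mapsto(-t,-v)$ which exchanges $\cB_{\eps_0}^{n+}$ and $\cB_{\eps_0}^{n-}$ and pre- with post-collisional configurations; only $t\in[0,t^*]$ is stated, so this is a remark rather than a step. The main obstacle, and the only place where real work beyond bookkeeping is hidden, is justifying that the good-pseudo-trajectory coupling in Proposition~\ref{error-est} really closes: one must iterate Proposition~\ref{geometric-prop} along the tree $a\in\cA_{n,n+s}$, checking at each of the $s$ adjunctions that the configuration reached is still in $\cG_k(\eps_0/2)$ (so the next excluded-angle set $B_{m_k}(Z_k^0)$ is defined and has measure $\le CkR^d\gamma(\eps,\eps_0)$), controlling the accumulation of the $\eps|\log\eps|$ position shifts over $s\le|\log\eps|$ steps so that \eqref{eq: example} is preserved, and verifying that the measure of the union of the $s$ bad angle-velocity sets, times the Jacobian weights $\prod((v_i-v_{a(i)})\cdot\nu_i)$, is absorbed by the $C^{n+s}(Ct/(\beta-\beta'))^s$ prefactor — this is exactly the content for which we invoke \cite{GSRT} via Proposition~\ref{geometric-prop} and Lemma~\ref{geometric-lem1}, so in the present exposition it is a citation rather than a computation.
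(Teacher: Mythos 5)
Your proposal follows essentially the same route as the paper: the theorem is assembled exactly as you describe, by truncating the Duhamel series using the a priori bounds \eqref{continuity2}--\eqref{eq: convergence} guaranteed by \eqref{growth}, applying the term-by-term comparison of Proposition~\ref{error-est} (whose initial-data term carries precisely the indicator $(1-\indc_{\cB_{\eps_0}^{(n+s)-}})$ that hypothesis \eqref{init-cv} controls), tuning $R,\delta,\eps_0$ and $n+s\le|\log\eps|$ so that all error terms collapse into a single $\gamma'(\eps,\eps_0)\to 0$, and identifying the limit with $f^{\otimes n}$ via uniqueness for the Boltzmann hierarchy. One caveat: your justification that the initial data are only sampled outside $\cB_{\eps_0}^{(n+s)-}$ is stated backwards --- from $V^+_{n+s,n}\cap\cB_{\eps}^{(n+s)-}=\emptyset$ and $\cB_{\eps}^{(n+s)-}\subset\cB_{\eps_0}^{(n+s)-}$ one cannot deduce $V^+_{n+s,n}\cap\cB_{\eps_0}^{(n+s)-}=\emptyset$; the correct reason (and the one implicit in Proposition~\ref{error-est}) is that the good pseudo-trajectories are constructed through Proposition~\ref{geometric-prop} so that the configuration at time $0$ remains in $\cG_{n+s}(\eps_0/2)$, i.e. outside the $\eps_0$-bad set, which is why the paper can assert $V^+_{n+s,n}\subset\cD_\eps^{n+s}\setminus\cB_{\eps_0}^{(n+s)-}$. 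Since you invoke Proposition~\ref{error-est} as stated, this slip does not affect the validity of your argument.
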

Compared to \cite{BLLS}, this theorem provides a description of the geometry of the bad sets along the evolution, and quantitative estimates of their measures. Note that a similar notion of one-sided convergence has been introduced by Denlinger in \cite{denlinger}.

 \section{Irreversibility and time concatenation} \label{sec:ITC}

Note that the very same proof   shows that, in the Boltzmann-Grad limit, the marginal $f_N^{(n)}$   converges  to  $ \tilde f ^{\otimes n}$  where $\tilde f$ is the solution of the {\bf reverse} Boltzmann equation
\begin{equation}
\label{backboltzmann}
\left\{ \begin{aligned}
& \d_t \tilde f +v\cdot \nabla_x \tilde f= -    Q(\tilde f,\tilde f),\\
& Q(f,f)(v):=\iint_{{\mathbb S}^{d-1} \times \R^d} [f(v')f(v'_1)-f(v)f(v_1)]  \,
{\color{black} \big((v_1 -v)  \cdot \nu\big)_+ } \, dv_1 d\nu  \, ,\\
 \end{aligned}\right.
\end{equation}
uniformly on  $(\cD_\eps^n \setminus \cB_{\eps_0} ^{n+} )$ and times from~$0$ to~$- t^* $. 
This convergence requires only the growth condition~(\ref{growth}) and the initial convergence
 \begin{equation}
 \label{init-cv2}
 \Big| (f_{N,0} ^{(n)} - f^{(n)}_0 )
 {(1 - 1_{  \cB_{\eps_0} ^{n+}}) }
 \Big| \leq   C^{n} \exp (-\beta H_{n}) \eps^\gamma ,
 \end{equation}
 for some~$\gamma>0$.

We thus have a {\bf symmetric situation for negative and positive times}, which indicates once more that the initial data play a very special role distinguishing between the direct and reverse Boltzmann dynamics.


\subsection{Irreversibility}\label{irreversibility-section}
\subsubsection{At the macroscopic level}

Recall that the Boltzmann dynamics admits a Lyapunov functional. Indeed,
using the well-known facts (see \cite{CIP})  that the mappings~$(v,v_1)\mapsto(v_1,v)$ (microscopic exchangeability) and~$(v,v_1,\nu)\mapsto \left(v',v_1',\nu\right)$ (microscopic reversibility)   have unit Jacobian determinants and preserve the cross-section,   one can show that formally for any test function~$\varphi$
\begin{equation}
\label{symmetry-boltz}
	\int Q(f,f)\varphi dv=\frac14 \iiint [f'f'_1-ff_1] (\varphi+\varphi_1 - \varphi'-\varphi'_1) ((v_1-v)\cdot \nu)_+ \, dvdv_1 d\nu\,,
\end{equation}
with the short notation $ f'= f(v'),  f'_1 = f(v'_1), f_1 = f(v_1)$,  and similarly for~$\varphi$.

Disregarding integrability issues, we choose~$\varphi=\log f$  in (\ref{symmetry-boltz}), and use the properties of the logarithm, to find
\begin{equation}
\label{dissipation-def}
\begin{aligned}
D(f)& \equiv  -\int Q(f,f) \log f dv\\
& = \frac14 \int_{\R^d\times \R^d \times {\mathbb S}^{d-1}}  (f'f'_1-ff_1) \log {f'f'_1\over ff_1} \, ((v-v_1)\cdot \nu)_+ \, dvdv_1 d\nu \geq 0 \, .
\end{aligned}
\end{equation}
The so-defined entropy production is therefore a nonnegative functional {\color{black} in agreement with the second principle of thermodynamics}.

This leads to  Boltzmann's H-theorem,  stating that the entropy is (at least  formally) a Lyapunov functional for the Boltzmann equation.
\begin{Prop}
Let $f=f(t,x,v)$ be a smooth solution to the Boltzmann equation {\rm(\ref{boltzmann})} with initial data $f_0$ of finite relative entropy with respect to some Gaussian (equilibrium) distribution $M=M(v)$
$$\int f_0\log {f_0 \over M} dv dx <+\infty\,.$$
Then, for all $t\geq 0$
\begin{equation}
\label{H-thm}
 \int f\log {f \over M} (t,x,v) dv dx+ \int_0^t \int D(f) (\tau,x) dxd\tau \leq   \int f_0\log {f_0 \over M} dv dx\, .
\end{equation}
\end{Prop}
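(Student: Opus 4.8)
The plan is to establish \eqref{H-thm} by multiplying the Boltzmann equation \eqref{boltzmann} by $\log(f/M)$ and integrating in $x$ and $v$, then controlling each resulting term. First I would note that, since $M$ is a spatially homogeneous Maxwellian, $Q(M,M)=0$ and $v\cdot\nabla_x M=0$, so $\log M$ is a collision invariant: this is what allows us to replace $\log f$ by $\log(f/M)$ in the entropy-production identity without changing anything. Writing $h=\log(f/M)$, the transport term gives
\[
\int (\d_t f + v\cdot\nabla_x f)\, h\, dv\, dx
= \frac{d}{dt}\int f\log\frac{f}{M}\, dv\, dx,
\]
using that $\int \d_t f\cdot(h-1)\,dv\,dx=\frac{d}{dt}\int f\log(f/M)\,dv\,dx$ together with mass conservation $\frac{d}{dt}\int f\,dv\,dx=0$, and that the transport term $\int v\cdot\nabla_x f\, h\, dv\, dx$ integrates to zero on the torus after writing $f\, v\cdot\nabla_x h = v\cdot\nabla_x(f\log(f/M)) - v\cdot\nabla_x f$, whose $x$-integral vanishes. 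For the collision term, I would apply the symmetrization identity \eqref{symmetry-boltz} with $\varphi=\log f$; because $\log M$ is a collision invariant, $\varphi$ can equally be taken as $h=\log(f/M)$, and one lands exactly on $-D(f)$ as in \eqref{dissipation-def}, which is nonnegative by the elementary inequality $(a-b)\log(a/b)\geq 0$. Integrating the resulting differential inequality $\frac{d}{dt}\int f\log(f/M)\,dv\,dx = -\int D(f)\,dx \leq 0$ from $0$ to $t$ yields \eqref{H-thm}, in fact with equality at the level of the smooth solution.

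The key steps, in order, are: (i) verify $\log M$ is a collision invariant and use it to pass from $\log f$ to $\log(f/M)$; (ii) handle the time-derivative term via $\int \d_t f\cdot h\,dv\,dx$ and mass conservation; (iii) kill the transport term by periodicity; (iv) invoke \eqref{symmetry-boltz} to identify the collision contribution with $-D(f)$ and use \eqref{dissipation-def} for its sign; (v) integrate in time. The only genuinely delicate point — flagged already in the statement by the words ``formally'' and ``disregarding integrability issues'' — is the justification that all these integrations by parts and the use of $\varphi=\log f$ are legitimate for the solution at hand, i.e.\ that $f\log f$, $f|v|^2$, and the entropy-production integrand are integrable and that $f$ is smooth and decays enough in $v$; this is why the hypothesis is phrased as ``smooth solution'' with finite initial relative entropy. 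I expect this integrability/approximation bookkeeping to be the main obstacle; in a rigorous treatment one would work with a truncated test function $\chi_\delta(h)$ or $h\,\indc_{|v|\leq R}$, pass to the limit using monotone/dominated convergence on each term, and only then recover \eqref{H-thm} as an inequality (the possible loss of an equality being precisely what produces the ``$\leq$'' rather than ``$=$'').

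I would also remark, for continuity with the preceding discussion, that the one-sided nature of \eqref{H-thm} is exactly the macroscopic shadow of the one-sided convergence isolated in Theorem~\ref{lanford-thm2}: running the same computation for the reverse equation \eqref{backboltzmann} flips the sign of $Q$ and hence produces $+D(\tilde f)$, so that relative entropy is nondecreasing along $\tilde f$ — consistent with the fact that $\tilde f$ is obtained in the limit only outside the sets $\cB_{\eps_0}^{n+}$ rather than $\cB_{\eps_0}^{n-}$.
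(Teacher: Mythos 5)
Your proof is correct and follows essentially the same formal argument the paper gives just before the statement: multiply by $\log(f/M)$, use that $\log M$ is a collision invariant and the periodicity of the torus to reduce everything to \eqref{symmetry-boltz} with $\varphi=\log f$, identify the collision contribution with $-D(f)\leq 0$ via \eqref{dissipation-def}, and integrate in time, the inequality accounting for the unaddressed integrability issues. (Only a harmless slip: the exact identity is $\int \d_t f\,(h+1)\,dv\,dx=\frac{d}{dt}\int f\log(f/M)\,dv\,dx$, not $(h-1)$, but mass conservation makes the constant term vanish either way.)
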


The classical interpretation of the H-theorem is that entropy measures the  quantity of microscopic information that is known on  the system. 
{\bf Irreversibility is related to a loss of information in our description of the dynamics}.

Note that, for negative times, the distribution is evolved according to the reverse Boltzmann dynamics, and we have
$$ \int f\log {f \over M} (-t,x,v) dv dx- \int_0^{-t }\int D(f) (\tau ,x) dxd\tau \leq   \int f_0\log {f_0 \over M} dv dx\, ,
$$
so that the global picture for the entropy should look like Figure \ref{entropy-fig}.
\begin{figure}[h]
\begin{center}
\scalebox{0.4}{\includegraphics{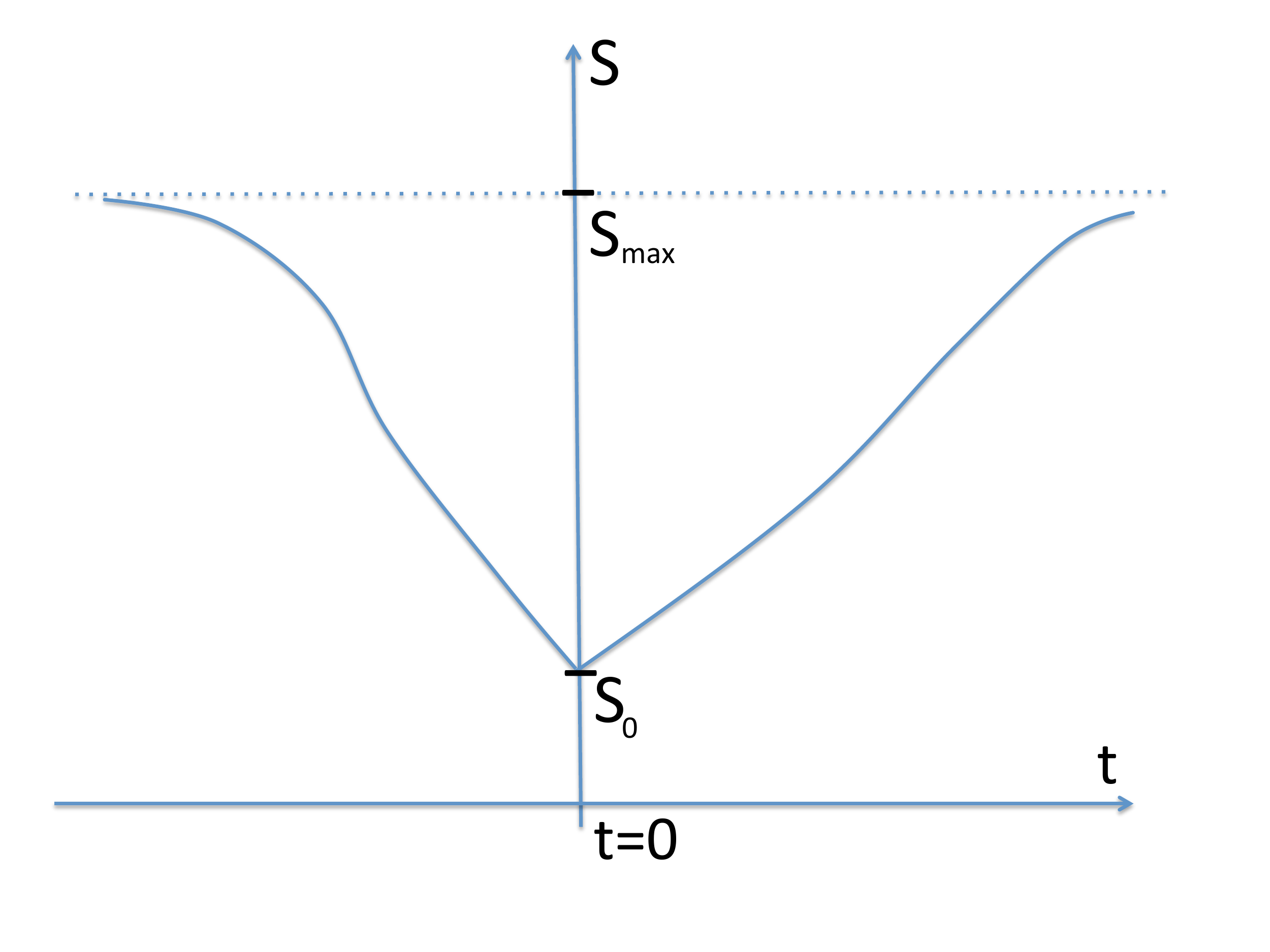}}
\caption{\label{entropy-fig}Variations of the entropy $S$.}
\end{center}
\end{figure}

\bigskip

It is important to realize that the {\bf loss of reversibility 
is already present at the level of the Boltzmann hierarchy} and does not come from some  averaging or projection in   phase space.
In particular, it has nothing to do with the chaos assumption. Indeed, it can be shown  that the Boltzmann hierarchy is irreversible: from the Hewitt-Savage theorem (see~\cite{hewitt-savage}) and the symmetry assumption on the labels, we indeed know that the initial data can be decomposed as a superposition of chaotic initial data, i.e.  that there exists a measure~$\pi$ on the space of probability densities such that 
$$ 
f_{0} ^{(n)}= \int g_0^{\otimes n} d\pi (g_0) \hbox{ for any } n\in \N^*\,.
$$

Then, by linearity of the Boltzmann hierarchy (\ref{hierarchyboltzmann}), we deduce that the family $(f^{(n)} (t))_{n\in \N^*}$ defined by
$$ f^{(n)}(t) = \int (g(t)) ^{\otimes n} d\pi (g_0)$$
where $g(t)$ is the solution to the Boltzmann equation with initial data $g_0$, is a solution to the Boltzmann hierarchy.
Since the entropy is nondecreasing for each solution of the Boltzmann equation,  we deduce that 
$$ S(t) :=-\int  (g(t) \log g(t) ) d\pi(g_0)  $$
 is nondecreasing, which encodes irreversibility.

This result 
means that microscopic information has been lost in the limiting process.
 
\subsubsection{At the microscopic level}

Let us now consider an intermediate time $\tau$, positive but strictly smaller than Lanford's time $t^* $ in Theorem \ref{lanford-thm}.
We would like to  reverse time and look at the convergence of the BBGKY hierarchy on $[\tau',\tau]$ for~$\tau'<\tau$, starting from time $\tau$. 

\begin{figure}[h]
\begin{center}
\scalebox{0.4}{\includegraphics{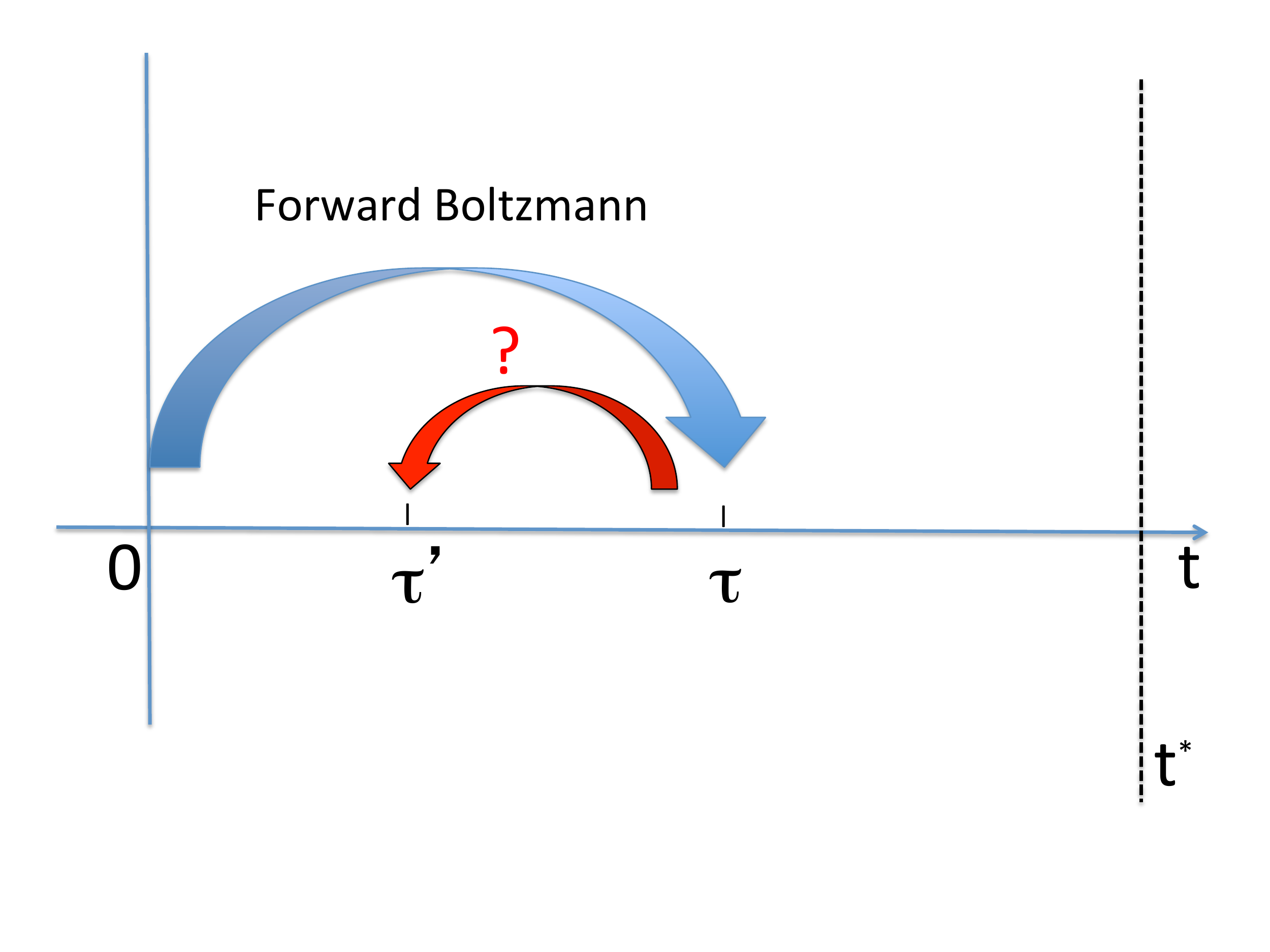}}
\caption{Irreversibility. The measure at time zero leads to the Boltzmann equation, but it is not possible to apply Lanford's theorem on $[\tau',\tau]$
taking the measure at time $\tau$ as initial condition.}
\end{center}
\end{figure}

Assume that at time 0 the data  $f_{N,0}$ is almost factorized, in the sense of~(\ref{eq: initial data 2}). We have seen that, for any fixed~$n$, as $\eps \to 0$, the marginals $(f_N^{(n)}) $ converge uniformly to $f^{\otimes n}$  outside from $\cB_{\eps_0}^{n-}$, where~$f$ solves the Boltzmann equation~:
$$ \|  ( f_N^{(n)} (t)  - f^{\otimes n}(t) )(1- \indc_{ \cB_{\eps_0}^{n-}})  \|_{\infty } \to 0 \hbox{ as } \eps \to 0\,.$$
By definition, starting from $\tau$,  $f$ is a solution of the {\bf backward} Boltzmann equation on~$[0,\tau]$ (namely of the problem at final values).
This would obviously not be the case if we were imposing  chaotic data at time $\tau$ 
{\color{black} similar to the one at time $0$.}
Indeed, the entropy would be decreased by the reverse Boltzmann equation while it is increased along the backward Boltzmann dynamics.
This means therefore that {\bf the structure of the family~$\big(f_N^{(n)} (\tau)\big)_{n\leq N}$ is very different from the chaotic structure of the initial data}.

\bigskip

Now let us turn to the question of irreversibility. We fix two times~$0 < \tau' < \tau < t^*$. Consider the representation formula~(\ref{duhamel1}) for the marginals $f_N^{(n)} $   
$$ \begin{aligned}
 f^{(n)} _N(\tau') =\sum_{s=0}^{N-n} \int_0^{\tau'} \int_0^{t_{n+1}}\dots  \int_0^{t_{n+s-1}}  {\bf S}_n(\tau'-t_{n+1}) C_{n,n+1}  {\bf S}_{n+1}(t_{n+1}-t_{n+2}) C_{n+1,n+2}   \\
\dots  {\bf S}_{n+s}(t_{n+s})     f^{(n+s)}_{N,0} \: dt_{n+s} \dots d t_{n+1} \, .
\end{aligned}
$$
It can be written starting from time~$\tau$ instead of $0$, meaning
$$ 
\begin{aligned}
 f^{(n)} _N(\tau') =\sum_{s=0}^{N-n} 
\int_\tau^{\tau'}  \int_\tau^{t_{n+1}}\dots  \int_\tau^{t_{n+s-1}}  {\bf S}_n(\tau'-t_{n+1}) C_{n,n+1}  {\bf S}_{n+1}(t_{n+1}-t_{n+2}) C_{n+1,n+2}   \\
\dots  {\bf S}_{n+s}(t_{n+s}-\tau)     f^{(n+s)}_{N}(\tau) \: dt_{n+s} \dots d t_{n+1} \, ,
\end{aligned}
$$
 since the Liouville equation~(\ref{Liouville}) satisfied  by~$f_N$  is   reversible and autonomous with respect to time (it generates a group of evolution).
As usual for analytic functions, the radius of convergence of the series at $\tau$ is at least $t^*-\tau$. 

What we would need  to apply the refined version of Lanford's theorem (Theorem \ref{lanford-thm2}) starting from time~$\tau$ and moving back to~$\tau'$ is the convergence of $f_N^{(n+s )}(\tau) $ on the sets $ V_{n+s,n}^-$ which consist of  the  configurations  of $n+s$ particles at time $\tau$ reached by good  pseudo-dynamics having $s$ collisions on $[\tau',\tau]$. Note that these pseudo-trajectories are built forward {\color{black} as they go from time $\tau'$ to $\tau$} and that we have
$$ 
V_{n+s,n}^- \subset  \cB_{\eps} ^{(n+s)-}\setminus  \cB_{\eps} ^{(n+s)+} \,,
$$
which is the symmetric counterpart of Proposition \ref{V-prop}.

Recollisions of the backward dynamics are indeed exactly collisions of the forward pseudo-dynamics.  This implies that we have no information about the convergence of $f_N^{(n)}(\tau) $ on the sets $V_{n+s,n}^-$, and that {\bf we cannot prove the convergence to the reverse Boltzmann dynamics on $[\tau',\tau]$ starting from $\tau$} (which is consistent with the fact that the reverse Boltzmann dynamics is not the backward Boltzmann dynamics!).
For the same reasons the argument behind the so-called Loschmidt's paradox fails. Indeed if at time $\tau$ we invert all the velocities and consider $f_N^{(n)}(\tau,X_n,-V_n)$ as initial data, we cannot apply Theorem \ref{lanford-thm2} so that there is no contradiction with the backtracking of marginals.
{\color{black} 
The same argument was already put forward in~\cite{BLLS}. 
}

\begin{Rmk} \label{rem:revbaddata}
Evolving a chaotic data by the reverse Boltzmann dynamics gives  a systematic method to construct data for which the Boltzmann-Grad limit fails to hold, even though we do have a weak chaos property  in the sense of Definition~{\rm\ref{def: chaos}}.
In Section~{\rm\ref{slightmodif}}, we   show a more explicit construction leading to an almost chaotic initial data, with modifications of the second order correlations on a small set,
such that the limiting dynamics is free transport (far from the Boltzmann dynamics). 
\end{Rmk}

\subsection{Time-concatenation}\label{iteration-sec}

Another important feature of the limiting equation is that  one can iterate in the sense of the following Proposition.

\begin{Prop}
Let $f$ be a smooth solution of  the Boltzmann equation~{\rm(\ref{boltzmann})}  on $ [0,\tau ] $ with initial data $f_0$, and assume there is a smooth solution $f$ of the Boltzmann equation on $[\tau, t^*]$ with data $f(\tau)$ at $\tau$.
Then, $f$ is the same solution of the Boltzmann equation on $[0, t^*]$.
\end{Prop}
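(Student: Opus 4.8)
The natural plan is to \emph{glue} the two solutions and then invoke the uniqueness statement that already underlies Theorem~\ref{lanford-thm}. I would first rewrite the Boltzmann equation in mild (Duhamel) form: a function $h$ on $[0,\theta]\times\T^d\times\R^d$ solves \eqref{boltzmann} with datum $h_0$ if and only if
\[
h(t)={\bf S}^0_1(t)\,h_0+\int_0^t {\bf S}^0_1(t-\sigma)\,Q(h,h)(\sigma)\,d\sigma\,,\qquad t\in[0,\theta]\,,
\]
where ${\bf S}^0_1(t):g\mapsto g(x-tv,v)$ is the one-particle free-transport group. Call $f_1$ the given solution on $[0,\tau]$ (so $f_1(0)=f_0$), call $f_2$ the given solution on $[\tau,t^*]$ with $f_2(\tau)=f_1(\tau)$, and set $g:=f_1$ on $[0,\tau]$ and $g:=f_2$ on $[\tau,t^*]$.

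Next I would check that $g$ is itself a solution of \eqref{boltzmann} on $[0,t^*]$ with datum $f_0$. For $t\le\tau$ this is the hypothesis on $f_1$. For $t\in[\tau,t^*]$, the hypothesis on $f_2$ reads $g(t)={\bf S}^0_1(t-\tau)f_2(\tau)+\int_\tau^t{\bf S}^0_1(t-\sigma)Q(g,g)(\sigma)\,d\sigma$; substituting $f_2(\tau)=f_1(\tau)={\bf S}^0_1(\tau)f_0+\int_0^\tau{\bf S}^0_1(\tau-\sigma)Q(g,g)(\sigma)\,d\sigma$ and using the group law ${\bf S}^0_1(t-\tau){\bf S}^0_1(\tau-\sigma)={\bf S}^0_1(t-\sigma)$ reconstitutes the Duhamel identity on $[0,t]$. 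In particular $g\in C([0,t^*])$ and $\partial_t g=-v\cdot\nabla_x g+Q(g,g)$ is continuous, so $g$ is a classical solution on $[0,t^*]$; the same computation read backwards shows, conversely, that the restriction of any solution on $[0,t^*]$ to $[0,\tau]$ and to $[\tau,t^*]$ solves the respective sub-problems.

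It then remains to apply uniqueness on $[0,t^*]$. Both $f_1$ and $f_2$ are smooth solutions of the type controlled by the theory, hence satisfy a Maxwellian-weighted bound $\|g(t)\exp(\beta'H_1)\|_{L^\infty}\le C$ uniformly on $[0,t^*]$ for some $\beta'>0$ (with $H_1=\tfrac12|v|^2$); consequently $g$ lies in the class in which the chaotic solutions $f^{(n)}=g^{\otimes n}$ of the Boltzmann hierarchy are unique, by the uniform Cauchy--Kowalevski estimate \eqref{continuity2} on $[0,t^*]$. Since the solution obtained by solving \eqref{boltzmann} directly from $f_0$ on $[0,t^*]$ lies in the same class, the two coincide; restricting to $[0,\tau]$ and $[\tau,t^*]$ identifies it with $f_1$ and $f_2$, which is the claim.

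The only point that needs care is the last one: one must be sure that both pieces actually sit inside one and the same uniqueness class on the \emph{whole} interval $[0,t^*]$, i.e. that the Gaussian weight has not degenerated below the threshold required by \eqref{continuity2} (since $\lambda t^*=\beta/2$ this is exactly the borderline case). Because $f_1$ and $f_2$ are assumed to exist and be smooth on their respective intervals, the requisite weighted bounds do hold and the argument closes; the gluing and the semigroup bookkeeping in the first two steps are routine.
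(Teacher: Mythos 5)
Your proof is correct and is essentially the paper's argument made explicit: the paper simply observes that the Boltzmann equation is local in time with no memory effect (a Markov/semigroup property), and your Duhamel gluing of $f_1$ and $f_2$ via the group law ${\bf S}^0_1(t-\tau){\bf S}^0_1(\tau-\sigma)={\bf S}^0_1(t-\sigma)$ is the standard way to turn that remark into a computation. The only caveat is that your final uniqueness step is not needed for the statement as posed (once the concatenated function is shown to satisfy the Duhamel identity on $[0,t^*]$ with datum $f_0$, the proposition is proved, there being no independently given solution on $[0,t^*]$ to compare with), and its justification is shaky — smoothness alone does not yield the Maxwellian-weighted bound $\|g(t)\exp(\beta' H_1)\|_{L^\infty}\le C$ needed to place $g^{\otimes n}$ in the uniqueness class controlled by \eqref{continuity2} — but since that step is superfluous, this does not affect the validity of the argument.
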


This property is a simple consequence of the fact that the Boltzmann equation is a local in time partial differential equation, with no memory effect. 
It is a kind of Markov property of the underlying process.

\begin{figure}[h]
\begin{center}
\scalebox{0.4}{\includegraphics{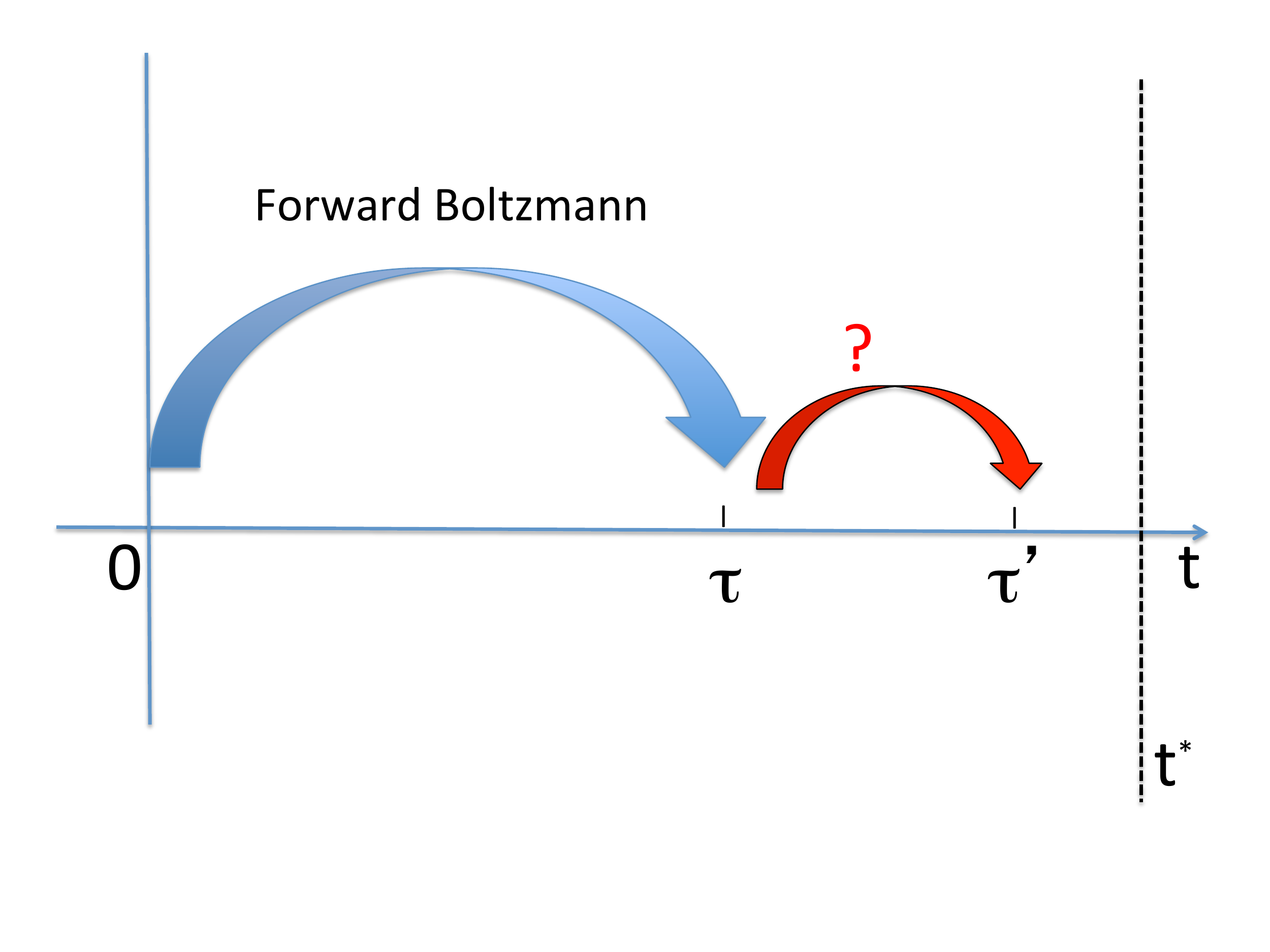}}
\caption{Time-concatenation. The measure at time zero leads to the Boltzmann equation on $[0,\tau]$ and it is possible to re-apply Theorem \ref{lanford-thm2} 
on $[\tau, \tau'] $ taking the measure at $\tau$ as initial condition.}
\end{center}
\end{figure}

\bigskip

Let  $\tau<\tau'$ denote   intermediate times, positive but strictly smaller than Lanford's time~$t^*$. 
As previously, we denote  by $f_N$  the solution to the Liouville equation with chaotic initial data in the sense of Theorem \ref{lanford-thm}.
If we want to iterate Lanford's convergence proof on~$[\tau, \tau']$, what we need (in addition to the uniform $L^\infty$ a priori estimate) is the convergence of~$f_N^{(n+s) }(\tau)$ on the sets $V_{n+s, n}^+$ reached by  good (backward) pseudo-trajectories. By definition, we  have~$V_{n+s,n}^+\subset \cD_\eps^{n+s} \setminus \cB_{\eps_0} ^{(n+s)-}$.

And from the refined version of Lanford's theorem (Theorem \ref{lanford-thm2}), we have that
$$ \|  \big( f_N^{(n)} (\tau) -f^{\otimes n} (\tau) \big) (1- \indc_{ \cB_{\eps_0}^{n-}})  \|_{\infty } \to 0 
\quad \hbox{ as } \eps \to 0\,.$$

Combining both properties, we deduce that we can iterate the convergence as long as the growth condition (\ref{growth}) is satisfied.

\begin{Rmk}
 Note that the main limiting factor to  extend the convergence time  is the loss with respect to $\beta$ in the estimate~{\rm(\ref{error-est})}. 
 The previous iteration argument fails therefore to improve  the time of convergence in Lanford's theorem for initial data of the form {\rm(\ref{eq: initial data 2})}.
 
 For initial data close to equilibrium,  it is   proved in  \cite{BGSR1, BGSR2} that one can  actually reach times of the order~$O(\log\log \log N)$.  The proof relies on global a priori bounds, it consists in designing a 
 subtle pruning procedure to get rid of the contribution of super-exponential collision trees and then to express the contribution of all other dynamics in terms of the initial data.

%
 \end{Rmk}

\section{Chaotic initial data leading to different dynamics}
\label{slightmodif}

At large scales, the propagation of chaos \eqref{eq: Stosszahlansatz} holds and the measure factorizes,  but the memory of the Hamiltonian dynamics remains encoded in $f_N(t)$ on very specific configuration sets of size vanishing with~$\eps$.
We are not able to describe the refined structure of the correlations in the density $f_N(t)$, but we are going to introduce an example which illustrates how constraints on very small sets may change the nature of the dissipative dynamics. Unlike the one obtained by reversing velocities (see Remark~\ref{rem:revbaddata}), this example will be totally explicit.

Using the notation  \eqref{eq: bad set} of the bad sets,    
we consider the initial data
\begin{equation}
\label{eq: initial data modified}
\hat f_{N,0} (Z_N ) := \frac1{\hat \cZ_N} \prod_{i=1}^Nf_0(x_i, v_i)   \,  
\indc_{ \{ Z_N \notin \cB_{\eps}^{N +} \}} \, ,
\end{equation}
where $\cB_{\eps}^{N +}$ is the set such that some collision occurs between the $N$ particles within a time~$T$. Contrary to the definition \eqref{eq: bad set}, we   choose $T$ as a short time and set $T = \delta >0$.
By construction the measure $\hat f_{N,0}$  will evolve according to free transport on the time interval~Â$[0, \delta]$ as there are no interactions between the particles. In particular, the evolution of the first marginal ${\hat f}_{N,0}^{(1)}$ is no longer approximated by the Boltzmann equation in the time interval~$[0, \delta]$  and there is no dissipation, even at the level of the marginals.

In the following, we are even going to argue that, at a macroscopic scale, the structure of the measure 
\eqref{eq: initial data modified} behaves essentially as the one of the initial data $f_{N,0}$ given in~\eqref{eq: initial data 2} for which Lanford's Theorem holds. In particular, we deduce that a chaos property \eqref{eq: Stosszahlansatz} holds for
the measure~$\hat f_{N,0}$. The key point is that the two measures differ on very singular sets which are exactly the relevant sets for the microscopic evolution.

To prove this, it is convenient to rephrase the measure \eqref{eq: initial data modified}, which has a fixed number of particles, in a slightly different setting where $N$ is varying. The terminology ``canonical'' and ``grand canonical'' ensemble (inherited from statistical physics) is used, respectively, for the two pictures. In the new setting one introduces ``rescaled correlation functions'' $f_{\eps,0}^{(j)}$ describing the same macroscopic behaviour as the marginals ${\hat f}_{N,0}^{(j)}$. For our present purpose the $f_{\eps,0}^{(j)}$ have some remarkable advantage, as they can be dealt with by using methods of expansion developed in different contexts \cite{Ko06,Poghosyan Ueltschi} (for complications of the cluster expansion techniques due to a canonical formulation, see \cite{PT12}).

\medskip
\subsection{The grand canonical formalism}

The grand canonical phase space is 
$$
{\mathcal D}_\eps = \cup_{n\geq 0} {\mathcal D}_\eps^n
$$
(actually ${\mathcal D}_\eps^n = \emptyset$ for $n$ large, due to the exclusion).
Given~$(f_{n,0} )_{n\geq 0}
$ we assign the collection of probability densities for the configuration $Z_n \in  {\mathcal D}_\eps^n$, $n=0,1,\dots$:
$$\frac{1}{n!}W_{\eps,0}^n(Z_n) 
:= \frac{1}{\cZ_\eps} \frac{\mu_\eps^n}{n!} f_{n,0} (Z_n) \, ,
$$
where $\mu_\eps = \eps^{-d+1}$. The normalization constant $\cZ_\eps$ is given by 
$$\cZ_\eps :=  \sum_{n\geq 0}\frac{\mu_\eps^n}{n!} \hat \cZ_n
\quad \text{with} \quad 
\hat \cZ_n := \int dZ_n f_{n,0} \,.
$$
$\{W_{\eps,0}^n\}_{n\geq 0}$ defines the grand canonical state on ${\mathcal D}_\eps$, 
normalized as $$\sum_{n\geq 0}\frac{1}{n!}\int W_{\eps,0}^n(Z_n) dZ_n =1\, .$$

The total number of particles $\cN$ is random and distributed according to 
\begin{equation*}
{\mathbb P}_{\mu_\eps} \big( \cN = n \big)  =  \frac{1}{\cZ_\eps} \frac{\mu_\eps^n}{n!}
\hat \cZ_n \, .
\end{equation*}
The choice $\mu_\eps = \eps^{-d+1}$ ensures that the average number of particles grows as $\eps^{-d+1}$,
hence the inverse mean free path remains of order $1$ (Boltzmann-Grad scaling)
\begin{equation}
\label{eq: Boltzmann Grad}
\lim_{\e\to 0}{\mathbb E}_{\mu_\eps} \big( \cN \big)   \eps^{d-1} = \kappa > 0\, .
\end{equation}
We postpone this check to the end of the section.

%
%

\bigskip

Let us define the $j$-particle correlation function, $j=1,2,\dots$.
The idea is to count how many groups of $j$ particles fall, in average, in a given configuration $Z_j = (z_1,\dots,z_j)$:
\begin{align*}
\rho_{\e,0}^{(j)} (z_1,\dots,z_j)=\Big\langle \sum_{k_i\neq   k_j} \delta_{\zeta_{k_1}}(z_1)\dots 
\delta_{\zeta_{k_j}}(z_j)\Big\rangle \, ,
\end{align*}
where we are labelling the particles and indicating their (random) configuration by $\zeta_1,\cdots,\zeta_n$, and
the brackets denote average with respect to the grand canonical state.
In terms of the densities it is
\begin{align*}
\rho_{\eps,0}^{(j)} (Z_j)=\sum_{n=0}^{\infty} \frac{1}{n!} \int dz_{j+1}\cdots dz_{j+n} 
W^{j+n}_{\eps,0}(Z_{j+n})\, .
\end{align*}

In the case with minimal correlations, i.e. when 
\begin{equation}
f_{n,0}:=\prod_{i=1}^n f_0(x_i, v_i)  \prod_{i\neq j} \indc_{|x_i-x_j|>\eps }\;,
\label{eq:GCfn0}
\end{equation} 
one has
\begin{align}
f_{\eps,0}^{(j)} (Z_j) & := \mu_\eps^{-j}\rho_{\eps,0}^{(j)}(Z_j)\nonumber\\
& =\Big[f^{\otimes j}_0 \prod_{1\leq i<k \leq j} \indc_{|x_i- x_k|Ê>\eps}  \Big]
\nonumber  \\
&  \times \frac{1}{\cZ_\eps}\sum_{n \geq 0}\frac{\mu_\eps^n}{n!} 
\int dz_{j+1}\cdots dz_{j+n} 
f^{\otimes n}_0 \left(\prod_{i=1}^j \prod_{k=j+1}^{j+n} \indc_{|x_i- x_k|Ê>\eps}\right)
\left(\,\prod_{j+1\leq i < k \leq {j+n}} \indc_{|x_i- x_k|Ê>\eps}\right)  \nonumber \\
& \leq f^{\otimes j}_0  \indc_{ \{ Z_j \notin \cB_{\eps}^{j +} \}}  (Z_j) \, ,
\label{eq: uniform grand canonique}
\end{align}
where the last inequality follows by removing the constraint between the $j$ particles and the rest of the system.
Note that the rescaled correlation functions $f_{\eps,0}^{(j)}$ are quantities of order~1 in~$\eps$.

\bigskip

The Boltzmann equation can be derived for both ensembles \cite{BLLS,S2,PS}. 
\begin{Thm}[\cite{BLLS}]
Consider a system of  hard-spheres of diameter $\eps$ on the~$d$-dimensional periodic box $\T^d = [0,1]^d$ (with~$d\geq 2$), initially in  the grand canonical state
with~$f_{n, 0}$ given by~{\rm(\ref{eq:GCfn0})} and~$f_0$ satisfying~{\rm(\ref{eq: initial data 1})}.

Then, as $\eps \to 0$, the rescaled correlation function $f_\eps^{(1)}$   converges almost everywhere to the solution of the Boltzmann equation~{\rm(\ref{boltzmann})}
with initial data~$f_0$, {\color{black} on a time interval $[0, t^*]$ where~$t^*$ depends only on the parameters $\beta,\mu$ of~\eqref{eq: initial data 1}}.
\end{Thm}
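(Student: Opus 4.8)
The plan is to reduce the statement to the refined version of Lanford's theorem, Theorem~\ref{lanford-thm2}, applied to the rescaled correlation functions $f_\eps^{(n)}$ in place of the marginals $f_N^{(n)}$. What has to be checked is that the $f_\eps^{(n)}$ satisfy a hierarchy of the same structure as the BBGKY hierarchy~\eqref{eq: BBGKY}--\eqref{BBGKYcollision}, that they obey a uniform a priori bound of the form~\eqref{growth}, and that at the initial time they converge to $f_0^{\otimes n}$ with the quantitative rate~\eqref{init-cv} away from the recollision sets. First I would differentiate the grand canonical densities $W_{\eps,0}^n$ along the Liouville flow and integrate out the extra variables; this produces a hierarchy identical to~\eqref{eq: BBGKY}--\eqref{BBGKYcollision} except that the prefactor $(N-n)\eps^{d-1}$ is replaced by $\mu_\eps\eps^{d-1}=1$ \emph{exactly}, with no finite-$N$ truncation and no $O((n+s)^2/N)$ correction. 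Iterating Duhamel's formula exactly as in Section~\ref{seriesexpansion} and applying the iterated continuity estimate~\eqref{continuity2} then gives, for $t\le t^*$ with $\lambda t^*=\beta/2$, the analog of~\eqref{eq: convergence},
\[
f_\eps^{(n)}(t)\ \le\ \exp\big((\lambda t-\mu)n\big)\,\exp\big((\lambda t-\beta)H_n\big)\, ,
\]
which simultaneously produces the short time $t^*$ (depending only on $\beta,\mu$) and verifies~\eqref{growth}. The only input needed here is the initial bound $f_{\eps,0}^{(n)}\le f_0^{\otimes n}\le \exp(-\mu n-\beta H_n)$, which is exactly~\eqref{eq: uniform grand canonique} combined with~\eqref{eq: initial data 1}.

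Next I would establish the convergence of the initial correlation functions, namely that for each $n$ there is $a>0$ with
\[
\big|\,(f_{\eps,0}^{(n)}-f_0^{\otimes n})\,(1-\indc_{\cB_{\eps_0}^{n-}})\,\big|\ \le\ C^n\exp(-\beta H_n)\,\eps^{a}\, ,\qquad |\log\eps|\,\eps\ll\eps_0\ll1\, .
\]
Writing $f_{\eps,0}^{(n)}$ explicitly as in~\eqref{eq: uniform grand canonique}, this splits into two contributions: the excluded-volume factors $\prod_{1\le i<k\le n}\indc_{|x_i-x_k|>\eps}$ among the first $n$ particles, which equal $1$ off a set of measure $O(n^2\eps^d)$ that — being absorbed, up to a negligible remainder, into $\cB_{\eps_0}^{n-}$ — goes into the error; and the series over the extra particles together with the normalization $1/\cZ_\eps$, for which one must show that the ratio equals $1+O(\eps)$ uniformly in the configuration (off the bad set). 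This last point is a convergent cluster (Mayer--Ursell) expansion estimate: the smallness parameter governing the expansion is furnished by the Boltzmann--Grad scaling $\mu_\eps\eps^{d-1}=1$ together with the exponential weight in~\eqref{eq: initial data 1}, and this is exactly where the expansion techniques of~\cite{Ko06,Poghosyan Ueltschi} enter — the canonical-side complications of~\cite{PT12} being avoided in the grand canonical picture.

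With~\eqref{growth} and~\eqref{init-cv} in hand, Theorem~\ref{lanford-thm2} applies verbatim and yields uniform convergence of $f_\eps^{(n)}$ to $f^{\otimes n}$ on $(\cD_\eps^n\setminus\cB_{\eps_0}^{n-})\times[0,t^*]$; since $|\cB_{\eps_0}^{n-}|\to0$ as $\eps\to0$, this gives the claimed almost everywhere convergence of $f_\eps^{(1)}(t)$ to the solution of~\eqref{boltzmann}. It remains to verify~\eqref{eq: Boltzmann Grad}: from $\int\rho_{\eps,0}^{(1)}\,dz={\mathbb E}_{\mu_\eps}(\cN)$ and $f_{\eps,0}^{(1)}=\mu_\eps^{-1}\rho_{\eps,0}^{(1)}$ with $\mu_\eps\eps^{d-1}=1$, one gets ${\mathbb E}_{\mu_\eps}(\cN)\,\eps^{d-1}=\int f_{\eps,0}^{(1)}\,dz\to\int f_0\,dx\,dv=:\kappa>0$, using the initial-data convergence above and the exponential bound for uniform integrability. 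The main obstacle is the second step: making the cluster expansion converge with the exponential weights preserved, i.e.\ showing that the partition function ratio and the excluded-volume series contribute only $O(\eps)$ away from the recollision sets — the geometric control of recollisions, which is the technical heart of Lanford's argument, is already packaged inside Theorem~\ref{lanford-thm2}, so the genuinely new work here lies in the combinatorial estimate on the initial correlation functions.
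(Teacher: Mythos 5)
Your proposal is correct and follows essentially the intended route: the paper itself only cites \cite{BLLS} for this statement, but its surrounding discussion (the grand canonical hierarchy with exact factor $\mu_\eps\eps^{d-1}=1$, the uniform bound \eqref{eq: uniform grand canonique}, and the remark that Theorem~\ref{lanford-thm2} needs only the growth condition \eqref{growth} plus the initial quasi-factorization \eqref{init-cv}) is precisely the reduction you assemble, with the cluster-expansion/quasi-independence estimate on $f_{\eps,0}^{(n)}-f_0^{\otimes n}$ supplying the $O(\eps)$ rate. Your treatment of the exclusion factors is even simpler than stated, since $\{|x_i-x_k|\le\eps\}\subset\cB_{\eps_0}^{n-}$ exactly (take $u=0$), so no remainder is needed there.
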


\subsection{A counterexample}
A natural reformulation of \eqref{eq: initial data modified} with varying number of particles is obtained as follows.  Define
\begin{equation}
\label{eq: grand canonique}
\frac{1}{n!}W_{\eps,0}^n(Z_n) 
:= \frac{1}{\cZ_\eps} \frac{\mu_\eps^n}{n!} f^{\otimes n}_0(Z_n)
\indc_{ \{ Z_n \notin \cB_{\eps}^{n +} \}}
= \frac{1}{\cZ_\eps} \frac{\mu_\eps^n}{n!} f^{\otimes n}_0(Z_n)  \prod_{i<j}\left(1+\zeta_{ij}\right),
\end{equation}
where $\mu_\eps = \eps^{-d+1}$ and $\zeta_{ij} =\zeta(z_i,z_j) =   - \indc_{\cC}(z_i,z_j)$ with  
$\cC$ the set leading to a collision
$$
\cC := \Big \{ (z_i,z_j) \in (\T^{d} \times \R^{d})^2, \quad \exists s \in [0,\delta], 
\quad  \big| x_i - x_j + s (v_i - v_j) \big| \leq \eps \Big\}.
$$
The normalization constant $\cZ_\eps$ is given as above by 
\begin{equation*}
\cZ_\eps :=  \sum_{n\geq 0}\frac{\mu_\eps^n}{n!} \hat \cZ_n
\quad \text{with} \quad 
\hat \cZ_n := \int dZ_n f_0^{\otimes n} \prod_{i<j}\left(1+\zeta_{ij}\right)\,.
\end{equation*}

By construction, the  grand canonical density \eqref{eq: grand canonique} evolves according to the free transport dynamics in the time interval $[0,\delta]$,
\begin{equation}
\label{eq: free evolution}
\forall t \leq \delta, \qquad 
f_{\eps}^{(j)} (t,Z_j) := \mu_\eps^{-j}\rho_{\eps}^{(j)}(t,Z_j) = {\bf S}^0_j (t) f_{\eps,0}^{(j)}(Z_j)\, .
\end{equation}
The rescaled correlation functions $f_{\eps,0}^{(j)}$ obey some of the assumptions required to apply Lanford's theorem, in particular the key  $L^\infty$ bound holds thanks to \eqref{eq: uniform grand canonique}. Moreover, we will see in Proposition~\ref{prop: factorisation} below that a chaos property holds in a sense stronger than \eqref{eq: Stosszahlansatz}.
Nevertheless the correlation functions are irregular at the microscopic scale on the sets $ \cB_{\eps}^{j +}$ so that 
Lanford's proof cannot apply and there is no contradiction with \eqref{eq: free evolution}.
Note that the constraints are imposed only in the forward direction, thus we expect to get the reverse Boltzmann equation for negative times.

\medskip

To conclude this example, we will show that the state is chaotic.
\begin{Prop}
\label{prop: factorisation}
The measure $\{W_{\eps,0}^n\}_{n\geq 0}$ is asymptotically chaotic,
uniformly outside a bad set of configurations in $\cD_\eps$. More precisely, there exists $f^{(1)}:\T^{d} \times \R^{d} \to \R^+$
such that
\begin{equation}
\label{eq: factorisation hors cC} 
\begin{aligned}
&  \lim_{\eps \to 0} \,\sup_{ z }\,\Big|f_{\eps,0}^{(1)}(z) - f^{(1)}(z) \Big|=0\;,\\
& \lim_{\eps \to 0} \,\sup_{ Z_j \not \in \cB_{\eps | \log \eps|}^{j +}}
\,\Big| f_{\eps,0}^{(j)} (Z_j) - f_{\eps,0}^{(1)} (z_1)\cdots f_{\eps,0}^{(1)} (z_j) \Big|
= 0\,  ,
 \end{aligned}
\end{equation}
for all $j \geq 2$.
\end{Prop}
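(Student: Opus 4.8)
The plan is to derive both limits from the explicit series representation of $f_{\eps,0}^{(j)}$ coming from the grand canonical formalism, using a cluster expansion (Mayer/Penrose-type) for the Mayer factors $\zeta_{ij} = -\indc_\cC(z_i,z_j)$. First I would write
\[
f_{\eps,0}^{(j)}(Z_j) = f_0^{\otimes j}(Z_j)\Big(\prod_{1\le i<k\le j}(1+\zeta_{ik})\Big)\,\frac{1}{\cZ_\eps}\sum_{n\ge 0}\frac{\mu_\eps^n}{n!}\int dz_{j+1}\cdots dz_{j+n}\, f_0^{\otimes n}\prod_{\substack{i\le j<k\\ \text{or } j<i<k}}(1+\zeta_{ik}),
\]
and then expand each $\prod(1+\zeta_{ik})$ as a sum over graphs. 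The crucial smallness input is that $\zeta(z_i,z_j)$ is supported on a set of measure $O(\eps^{d-1})$ in the $x_i$-variable (at fixed velocities), so that $\int |\zeta(z_i,z_j)|\,dz_i \le C\, (1+|v_j|^d)\,\eps^{d-1} = C\,\mu_\eps^{-1}(1+|v_j|^d)$, using the Gaussian tail of $f_0$ from \eqref{eq: initial data 1} to absorb the velocity growth. Thus every Mayer bond carries a factor $\mu_\eps^{-1}$, which exactly compensates the $\mu_\eps$ per integrated particle, so the connected-cluster series converges absolutely for $\eps$ small, uniformly in $j$ after extracting the $f_0^{\otimes j}$ weight. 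This is the standard Poghosyan--Ueltschi / Kotecký--Preiss regime, and I would cite \cite{Ko06,Poghosyan Ueltschi}.

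Next, from the cluster expansion one gets $f_{\eps,0}^{(j)}(Z_j) = \big(\prod_{i<k}(1+\zeta_{ik})\big)\prod_{i=1}^j \big(f_0(z_i)\,\Phi_\eps(z_i)\big)\cdot(1+R_\eps(Z_j))$, where $\Phi_\eps(z_i) := \exp\big(\sum_{\text{connected clusters rooted at }i}\cdots\big)$ is a one-particle ``activity-renormalization'' factor and $R_\eps$ collects clusters connecting two or more of the external points $z_1,\dots,z_j$. The factorized, velocity-independent part of $\Phi_\eps$ together with $\cZ_\eps^{-1}$ produces the limiting one-particle correlation $f^{(1)}$; one checks $\|\Phi_\eps - \phi\|_\infty \to 0$ for a limit $\phi$ (in fact $\phi$ is spatially constant because $f_0$ integrates against a $\delta$-supported shift), giving the first line of \eqref{eq: factorisation hors cC}. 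For the second line, note that the prefactor $\prod_{i<k}(1+\zeta_{ik})$ equals $\indc_{\{Z_j\notin \cB_\eps^{j+}\}}$, which is identically $1$ on the complement of $\cB_{\eps|\log\eps|}^{j+}$ (since $\cB_\eps^{j+}\subset\cB_{\eps|\log\eps|}^{j+}$), so on that set $f_{\eps,0}^{(j)}$ differs from $\prod_i f_0(z_i)\Phi_\eps(z_i)$ only through $R_\eps$; and every term of $R_\eps$ requires a cluster joining two external points $z_i,z_k$, hence contains at least one bond $\zeta(\,\cdot\,,\,\cdot\,)$ whose integration is $O(\mu_\eps^{-1})$ but which is \emph{not} compensated (both endpoints are external, not summed over), so $|R_\eps(Z_j)| \le C_j\,\eps^{d-1}|\log\eps|^{\text{something}} \to 0$ uniformly on $Z_j$ in a bounded-velocity region, and the Gaussian weight handles unbounded velocities. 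Comparing $\prod_i f_0\Phi_\eps$ with $f_{\eps,0}^{(1)}(z_1)\cdots f_{\eps,0}^{(1)}(z_j)$ — the latter being $\prod_i f_0(z_i)\Phi_\eps(z_i)(1+R_\eps^{(1)}(z_i))$ with $R_\eps^{(1)}$ a single-point remainder that is itself $O(\eps^{d-1})$ — closes the estimate.

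I would also record the deferred check on the normalization: $\cZ_\eps = \sum_n \frac{\mu_\eps^n}{n!}\int f_0^{\otimes n}\prod_{i<k}(1+\zeta_{ik})$ has, by the same expansion, $\log\cZ_\eps = \mu_\eps \int f_0 + O(1)$, so $\mathbb E_{\mu_\eps}(\cN) = \mu_\eps \frac{d}{d\mu_\eps}\log\cZ_\eps = \mu_\eps\int f_0\,(1+o(1))$, which gives \eqref{eq: Boltzmann Grad} with $\kappa = \int f_0$. The main obstacle is purely technical bookkeeping: making the convergence of the cluster series \emph{uniform in $j$} and obtaining the explicit $O(\eps^{d-1})$ (up to logs) decay of the multi-external-point remainder $R_\eps$, including tracking the velocity weights so that the bounds are uniform over all $Z_j$ (not just on compact sets); the combinatorics is standard but one must be careful that the extra factor $|\log\eps|$ appearing in the definition of the enlarged bad set $\cB_{\eps|\log\eps|}^{j+}$ is enough slack to guarantee $\prod_{i<k}(1+\zeta_{ik})\equiv 1$ there — which it is, since collisions in $\cC$ happen within the much smaller radius $\eps$. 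No genuinely new idea beyond the standard low-activity cluster expansion is required; the content is in verifying that the ``bad'' sets where factorization fails are contained in $\cB_{\eps|\log\eps|}^{j+}$ and that their complement sees only the trivial $(1+\zeta)$-prefactor plus exponentially small cluster corrections.
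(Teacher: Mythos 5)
Your framework (grand canonical cluster expansion in the Poghosyan--Ueltschi regime, with the explicit prefactor $\prod_{i<k}(1+\zeta_{ik})$ trivialized outside the enlarged bad set) is the same route as the paper, which for $j=2$ invokes Theorem~\ref{Poghosyan Ueltschi} to reduce the whole question to the chain sum $\sum_{m\geq 1}\mu_\eps^m\int dZ'_m\,f_0^{\otimes m}\,|\zeta(z_1,z'_1)\zeta(z'_1,z'_2)\cdots\zeta(z'_m,z_2)|\,e^{\cdots}$. But your key smallness claim for the remainder $R_\eps$ is not correct as stated, and this is a genuine gap rather than bookkeeping. A connected cluster joining two external points through $m$ internal vertices has at least $m+1$ bonds but only $m$ available integrations; a bond can only be converted into a factor $O(\eps^{d-1})$ by integrating over one of its endpoints, and each internal integration absorbs only one bond, since pointwise $|\zeta|\leq 1$. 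Hence the surplus bond is \emph{not} an uncompensated $\mu_\eps^{-1}$: already the $m=1$ term $\mu_\eps\int dz'_1\,f_0(z'_1)\,|\zeta(z_1,z'_1)\zeta(z'_1,z_2)|$ is, by your counting, $O(1)$ and not $O(\eps^{d-1})$, because the constraint is only that $x'_1$ lies in the intersection $\cR(z_1,v'_1)\cap\cR(z_2,v'_1)$ of two collision cylinders, each of volume of order $\eps^{d-1}\delta$, and nothing in the naive bound prevents that intersection from being comparable to a full cylinder (e.g.\ when $x_1$ and $x_2$ are close, or aligned with the relative velocities). For longer chains the naive count gives $O(\delta^m)$, small in $\delta$ but not vanishing as $\eps\to 0$. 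So $R_\eps\to 0$ does not follow from vertex/bond counting at all.

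This is exactly where the hypothesis $Z_j\notin\cB_{\eps|\log\eps|}^{j+}$ has to enter quantitatively, not merely (as you assert) to guarantee $\prod_{i<k}(1+\zeta_{ik})\equiv 1$. The paper's proof spends its main effort on this point: outside $\cB_{\eps|\log\eps|}^{2+}$ one has $|x_1-x_2|\geq\eps|\log\eps|$, and a dichotomy on the angle $\alpha$ between the two cylinder axes shows that either $|\sin\alpha|\geq\eps|\log\eps|^{1/2}$, in which case the overlap has measure $O(\eps^{d-1}/|\log\eps|^{1/2})$, or the cylinders are nearly parallel and anchored at points at distance at least $\eps|\log\eps|$, which confines the direction of $v'_1-v_1$ to a solid angle of size $O\big(((|v'_1-v_1|+|v_1-v_2|)/|\log\eps|^{1/2})^{d-1}\big)$. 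This produces a gain $1/|\log\eps|^{1/2}$ on one bond of each chain, the remaining bonds contributing $C\delta$ each, whence the bound $C^m\delta^{m-1}/|\log\eps|^{1/2}$ and summability for $\delta$ small. Without this geometric overlap estimate your argument cannot conclude (and your claimed rate $O(\eps^{d-1})$ up to logarithms is in any case too strong: the true rate obtained is only logarithmic). Note also that the paper handles $j=1$ and the identification of $f^{(1)}$ by citing Theorem~2.2 and Proposition~6.1 of \cite{Poghosyan Ueltschi}, rather than by the renormalized-activity factorization you sketch, though that part is comparatively minor.
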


The result  for $j=2$ will follow by applying Theorem 2.3 of \cite{Poghosyan Ueltschi} (recalled below) where the decay of correlations has been estimated by means of cluster expansion.
\begin{Thm}[\cite{Poghosyan Ueltschi}]\label{Poghosyan Ueltschi}
Assume that there exist non negative functions $a$ and $b$ such that
\begin{equation}
\label{PU- assumptions}
\begin{aligned}
\forall n, \quad \forall (z_1,\dots, z_n) \, , \quad \prod _{1\leq i<j\leq n} (1+\zeta_{ij}) \leq \prod_{i=1}^n e^{b(z_i)}\,,\\
\forall z_i \, , \quad 
{\eps^{1-d}}
\int f_{0}( z_j) |\zeta_{ij}| e^{a(z_j) +2b(z_j)} dz_j \leq a(z_i)\,.
\end{aligned}
\end{equation}
Then, for almost all $z_1, z_2$,
\begin{align*}
& \Big| f_{\eps,0}^{(2)} (z_1,z_2) - f_{\eps,0}^{(1)} (z_1) f_{\eps,0}^{(1)} (z_2) \Big|
\leq   f_0(z_1) f_0(z_2) e^{a(z_1) +a(z_2) +2b(z_1) +2b(z_2)} \\
& \ \ \ \ \ \ \ \ \ \ \cdot\Big\{|\zeta_{12}| +
 \sum_{m \geq 1} \mu_\eps^m \int dZ'_m f_0^{\otimes m}(Z'_m) |\zeta(z_1,z'_1)\zeta(z'_1,z'_2)\cdots \zeta(z'_m,z_2)|
 \prod e^{a(z'_i) + 2b(z'_i) }\Big\}\, .
\end{align*}
\end{Thm}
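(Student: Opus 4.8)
\textbf{Proof strategy for Theorem~\ref{Poghosyan Ueltschi}.}
The plan is to establish the two-point decorrelation bound via the standard Mayer/cluster expansion for the ratio of grand-canonical partition functions, controlling the resulting sums of Kirkwood--Salsburg--type trees by the two hypotheses in \eqref{PU- assumptions}. First I would write both rescaled correlation functions as ratios involving the ``Ursell-type'' partition function with particles pinned at prescribed points: setting $\Xi(U) := \sum_{n\geq 0}\frac{\mu_\eps^n}{n!}\int_{U^n} f_0^{\otimes n}(Z_n)\prod_{i<j}(1+\zeta_{ij})\,dZ_n$ for a region $U$, and more generally $\Xi_{Z_k}$ for the partition function with $k$ extra fixed particles at $z_1,\dots,z_k$ interacting with the integration variables through the same factors $1+\zeta$, one has $f_{\eps,0}^{(1)}(z_1)=f_0(z_1)\,\Xi_{z_1}/\Xi$ and $f_{\eps,0}^{(2)}(z_1,z_2)=f_0(z_1)f_0(z_2)(1+\zeta_{12})\,\Xi_{z_1,z_2}/\Xi$. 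Hence the difference $f_{\eps,0}^{(2)}-f_{\eps,0}^{(1)}f_{\eps,0}^{(1)}$ equals $f_0(z_1)f_0(z_2)\bigl[(1+\zeta_{12})\Xi_{z_1,z_2}\Xi - \Xi_{z_1}\Xi_{z_2}\bigr]/\Xi^2$, and the whole problem reduces to showing that the bracket, normalized by $\Xi^2$, is bounded by the asserted sum of chains of $|\zeta|$'s times the exponential weights.

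The core step is the expansion itself. I would expand $\log(\Xi_{z_1,z_2}/\Xi_{z_1})$ and $\log(\Xi_{z_2}/\Xi)$ (equivalently, the logarithm of the one-point correlation ``at fixed background''), which produces the connected Ursell functions: $\Xi_{z_1,z_2}\Xi - \Xi_{z_1}\Xi_{z_2}$ is, after dividing by $\Xi^2$, a sum over clusters that must be connected to \emph{both} $z_1$ and $z_2$ (the disconnected part is exactly cancelled by the product $\Xi_{z_1}\Xi_{z_2}$). The $\zeta_{12}$ term, contributing $\zeta_{12}\Xi_{z_1,z_2}\Xi/\Xi^2$, is handled separately and produces the explicit $|\zeta_{12}|$ summand once $\Xi_{z_1,z_2}/\Xi$ is bounded by the weight $e^{a(z_1)+a(z_2)+2b(z_1)+2b(z_2)}$ using the second hypothesis. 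For the connected part, since a cluster joining $z_1$ to $z_2$ must contain a path of $\zeta$-bonds from $z_1$ through intermediate integration variables $z_1',\dots,z_m'$ to $z_2$, I would bound each cluster by its minimal spanning path weighted by $\prod|\zeta|$ along that path, then bound the sum over all ways of ``decorating'' the path with extra vertices and bonds. This last sum is exactly where the first hypothesis of \eqref{PU- assumptions}, $\prod_{i<j}(1+\zeta_{ij})\leq \prod_i e^{b(z_i)}$, enters: it lets us replace the full product of $(1+\zeta)$ factors among and around the path by $\prod e^{b}$, so that integrating out each non-path vertex $z'$ costs at most $\mu_\eps\int f_0(z')|\zeta|e^{\cdots}\,dz'$, which the second hypothesis controls by $a$. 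The bookkeeping—Penrose tree-graph identity or the Kirkwood--Salsburg equations iterated—turns ``sum over connected clusters'' into a geometric-type series in $a$, summable under the smallness implicit in $\int f_0|\zeta|e^{a+2b}\leq a$, leaving precisely the chain sum $\sum_{m\geq 1}\mu_\eps^m\int f_0^{\otimes m}|\zeta(z_1,z_1')\cdots\zeta(z_m',z_2)|\prod e^{a(z_i')+2b(z_i')}$.

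The main obstacle will be making the combinatorial cancellation between $(1+\zeta_{12})\Xi_{z_1,z_2}\Xi$ and $\Xi_{z_1}\Xi_{z_2}$ rigorous and then \emph{re-summing} the connected clusters cleanly into the single chain of $|\zeta|$'s: one must show that every connected cluster linking $z_1$ and $z_2$ can be charged to a unique backbone path and that the ``extra'' structure around the backbone is absorbed entirely into the $e^{b}$ factors and the $a$-geometric series, with all constants compatible with the stated bound. A secondary technical point is that the factors $1+\zeta_{ij}=\indc_\cC^{\,c}$ are nonnegative but can vanish, so the logarithm of $\Xi$ is only formally defined; I would circumvent this either by working directly with the Kirkwood--Salsburg fixed-point equation for the correlation functions (which does not require taking logarithms and for which the two hypotheses are exactly the classical convergence conditions), or by a standard analyticity/continuity argument in the activity $\mu_\eps$, noting that the final inequality is uniform in the regime where the hypotheses hold. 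Once the abstract bound is in place, convergence of the chain sum and hence \eqref{PU- assumptions} give the conclusion for a.e.\ $z_1,z_2$; the refinement to the uniform statement outside $\cB_{\eps|\log\eps|}^{j+}$ in Proposition~\ref{prop: factorisation} then follows by choosing $a,b$ adapted to the explicit $\zeta=-\indc_\cC$ and estimating the chain sum, as indicated in the paper.
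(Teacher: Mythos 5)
There is nothing in the paper to compare against: Theorem~\ref{Poghosyan Ueltschi} is not proved in this article at all, it is imported verbatim (as Theorem 2.3 of the cited work of Poghosyan and Ueltschi) and used as a black box in the proof of Proposition~\ref{prop: factorisation}. Your outline is, in spirit, the proof given in that reference: represent $f_{\eps,0}^{(1)}$ and $f_{\eps,0}^{(2)}$ as ratios of pinned grand-canonical partition functions, observe that $(1+\zeta_{12})\Xi_{z_1,z_2}\Xi-\Xi_{z_1}\Xi_{z_2}$ splits into the explicit $\zeta_{12}$ term plus the sum over clusters connected to both $z_1$ and $z_2$, and control the latter by chains of $|\zeta|$-bonds under the Kotecky--Preiss-type condition (\ref{PU- assumptions}), the first hypothesis playing the role of stability and the second that of the convergence criterion with activity $\mu_\eps=\eps^{1-d}$. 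Your fallback of working with the Kirkwood--Salsburg equations rather than $\log\Xi$ is also sensible, since the hard-core factors $1+\zeta_{ij}\in\{0,1\}$ make the logarithm delicate while the KS fixed point only needs the bounds you quote.

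That said, as a proof the proposal stops exactly where the theorem begins. The two steps you flag as ``the main obstacle'' --- (i) charging each connected cluster linking $z_1$ to $z_2$ to a unique backbone chain $z_1,z_1',\dots,z_m',z_2$ without overcounting (the clusters are not chains, and each contains many paths, so a canonical choice via a tree-graph or Penrose-type identity, or an inductive KS iteration, is mandatory), and (ii) showing that all decorations of the backbone resum to exactly the weights $e^{a(z_i')+2b(z_i')}$ per chain vertex and $e^{a(z_k)+2b(z_k)}$ at the endpoints, with no extra combinatorial factor --- constitute the entire content of the cited theorem; likewise the bound on $\Xi_{z_1,z_2}/\Xi$ needed to convert the $\zeta_{12}\,\Xi_{z_1,z_2}\Xi/\Xi^2$ piece into the stated $|\zeta_{12}|$ summand must be extracted from the same expansion, not just from the second hypothesis alone. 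So the roadmap is the right one and consistent with the source the paper relies on, but it is a roadmap: if you intend it to stand alone rather than as a pointer to Poghosyan--Ueltschi, the resummation estimate must actually be carried out.
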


\begin{proof}[Proof of Proposition {\rm\ref{prop: factorisation}} when $j=2$]
Assumptions~(\ref{PU- assumptions}) of Theorem~\ref{Poghosyan Ueltschi} hold by choosing $b=0$ and
$a(v) = c \delta  ( 1+|v|) $ with $\delta$ small enough, for some constant $c$ (depending on $\b,\mu, d$ of~\eqref{eq: initial data 1}).
As a consequence, Theorem~\ref{Poghosyan Ueltschi} leads to 
\begin{align*}
& \Big| f_{\eps,0}^{(2)} (z_1,z_2) - f_{\eps,0}^{(1)} (z_1) f_{\eps,0}^{(1)} (z_2) \Big|
\leq  f_0^{\otimes 2} e^{c \delta  (2+|v_1|+|v_2|)} \\
& \ \ \ \ \ \ \ \ \ \ \cdot\Big\{|\zeta_{12}| +
 \sum_{m \geq 1} \mu_\eps^m \int dZ'_m f_0^{\otimes m}(Z'_m) |\zeta(z_1,z'_1)\zeta(z'_1,z'_2)\cdots \zeta(z'_m,z_2)|
 e^{c \delta m + c \delta \sum_{i=1}^m |v'_i|}\Big\}\, .
\end{align*}
For $\delta$ small, the prefactor is bounded by $c \, e^{-\frac{\b}{4}v^2}$ as $f_0$ satisfies \eqref{eq: initial data 1}.
Moreover, for $(z_1,z_2)$ outside $\cB_{\eps | \log \eps|}^{2 +}$, the first term  $\zeta_{12}$ is equal to 0.
Then the proof of \eqref{eq: factorisation hors cC} boils down to showing that 
\begin{align}
\label{eq: decay}
\lim_{\eps \to 0} \sum_{m \geq 1} c^m\eps^{m(1-d)} \int dZ'_m  |\zeta(z_1,z'_1)\zeta(z'_1,z'_2)\cdots \zeta(z'_m,z_2)|
 e^{ - \frac{\beta}{4} \left(v_1^2 + v_2^2 + \sum_{i=1}^m |v'_i|^2\right)} = 0\,  
\end{align}
uniformly out of $\cB_{\eps | \log \eps|}^{2 +}$.
Given a velocity $v$, we define a cylinder associated with $z_1= (x_1,v_1)$ by
$$
\cR (z_1, v):= \Big\{ x \in \T^d, \qquad \exists s \in [0,\delta], \quad \big| x - x_1 +  s (v-v_1) \big| \leq \eps \Big\}\,.
$$
The measure of $\cR (z_1, v)$ is of order $\eps^{d-1} \delta |v - v_1|$.

\medskip

We first treat the term $m=1$ and show that for some constant $C>0$
\begin{equation}
\label{eq: m=1}
\eps^{1-d} \int dz'_1 |\zeta(z_1,z'_1) \zeta(z'_1,z_2)|
 e^{ - \frac{\beta}{4} \left(v_1^2+v_2^2 +|v'_1|^2\right)}  \leq  \frac{C}{ | \log \eps |^{1/2}}  \,  \cdotp
\end{equation}
Given $z_1,z_2$, we distinguish two cases to evaluate the measure of the overlap 
$\cR (z_1, v_1') \cap \cR (z_2, v_1')$. 
Let $\alpha$ be the angle between the axis of both cylinders, i.e.\, the angle between~$v'_1 -v_1$ and $v'_1 -v_2$.


\smallskip

\noindent
$\bullet$ {\it Suppose that $| \sin (\alpha) | \geq \eps | \log \eps |^{1/2}$}, then the angle between both axis is large enough so that the overlap $\cR (z_1, v_1') \cap \cR (z_2, v_1')$ has a volume of order at most $\eps^{d-1} /{| \log \eps |^{1/2}}$.
We get 
\begin{equation}
\label{eq: m=1 part1}
\eps^{1-d} \int dz'_1 \indc_{\{ | \sin (\alpha) | \geq \eps | \log \eps |^{1/2} \}}  
|\zeta(z_1,z'_1) \zeta(z'_1,z_2)|
e^{ - \frac{\beta}{4} \left(v_1^2+v_2^2 +|v'_1|^2\right)}  \leq  \frac{C}{| \log \eps |^{1/2}}  \,
\cdotp
\end{equation}

\smallskip

\noindent
$\bullet$ {\it Suppose that  $| \sin(\alpha) |\leq \eps | \log \eps |^{1/2}$}, then the cylinders $\cR (z_1, v_1')$ and $\cR (z_2, v_1')$ are almost parallel and they are anchored at $x_1, x_2$. Recall that $(z_1,z_2)$ is outside $\cB_{\eps | \log \eps|}^{2 +}$ so that $|x_1 - x_2| \geq \eps  | \log \eps|$. The length of both cylinders is less than $\delta\left(|v'_1-v_1|+|v_1 - v_2|\right)$, thus they can overlap only if $\theta$, the angle between $x_1 - x_2$ and $v_1' - v_1$, is small enough. 

- Suppose first 
that the lines  $x_1+\lambda (v'_1 -v_1)$ and $x_2+\mu( v'_1 - v_2)$ intersect at some point $u$ (see Figure \ref{fig: example}).
Then the length $\ell = \min \{ |u-x_1|, |u-x_2| \}$ satisfies 
$$
\ell = \frac{ |\sin \theta| }{ |\sin \alpha| } |x_1 - x_2| \, .
$$
For the intersection to occur one needs that $\ell \leq \delta\left(|v'_1-v_1|+|v_1 - v_2|\right)$ so that we get the condition on $\theta$
$$
|\sin \theta| \leq \delta\left(|v'_1-v_1|+|v_1 - v_2|\right) \frac{|\sin \alpha|}{|x_1 - x_2|} \leq  C\, \frac{|v'_1-v_1|+|v_1 - v_2|}{| \log \eps |^{1/2}}  \,
\cdotp
$$

- 
If the two lines in the picture do not intersect (as will happen in general for $d>2$), the above inequality can be proved
by a similar argument. Define $u, v$
as the points in the first and second lines where
the distance $2\eps$ between both lines is reached. 
Then we can project all vectors orthogonally to $u-v$, and we get exactly the same picture.

As a conclusion, we get that $\theta$ should belong to a solid angle of order 
$ \left(  \frac{|v'_1-v_1|+|v_1 - v_2|}{| \log \eps |^{1/2}} \right)^{d-1}$.
Integrating over $x_1'$ and $v_1'-v_1$, we deduce that 
\begin{equation*}
\eps^{1-d} \int dz'_1 \indc_{\{ |\sin \alpha| \leq \eps | \log \eps |^{1/2} \}}  |\zeta(z_1,z'_1) \zeta(z'_1,z_2)|
 e^{ - \frac{\beta}{4} \left(v_1^2+v_2^2 +|v'_1|^2\right)}   \leq  \frac{C}{| \log \eps |^{(1/2)(d-1)}}  \,
\cdotp
\end{equation*}
Combined with \eqref{eq: m=1 part1}, this completes \eqref{eq: m=1}.

\begin{figure}[h]
\begin{center}
\scalebox{0.4}{\includegraphics{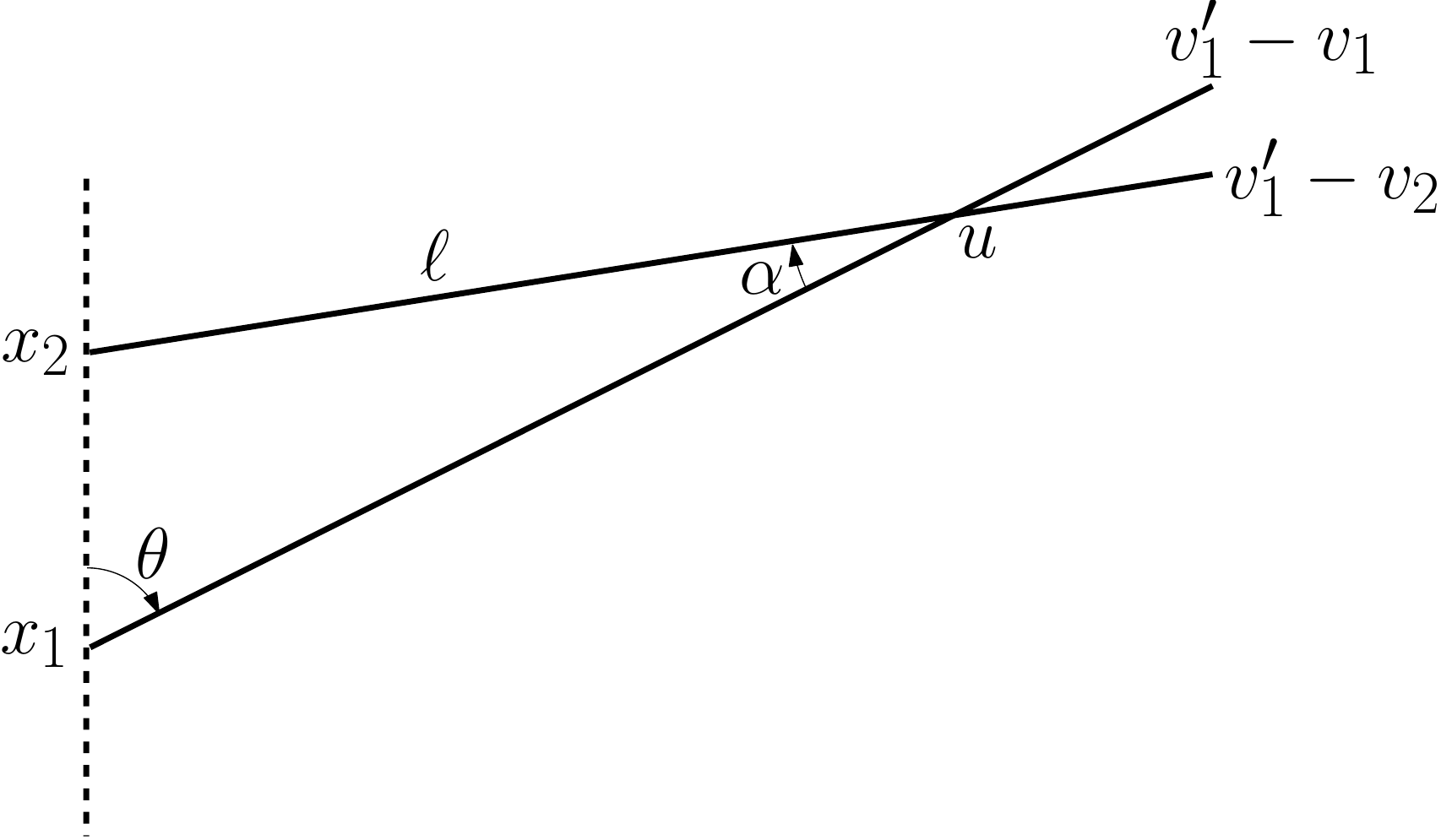}}
\caption{\label{fig: example} }
\end{center}
\end{figure}

\medskip

We now   show  that the contribution of the term $m$ is bounded by  
\begin{align}
\label{eq: decay m}
\eps^{m(1-d)} \int dZ'_m  |\zeta(z_1,z'_1)\zeta(z'_1,z'_2)\cdots \zeta(z'_m,z_2)|
e^{ - \frac{\beta}{4} \left(v_1^2 + v_2^2 + \sum_{i=1}^m |v'_i|^2\right)}  \leq \frac{ C^m \delta^{m-1}}{| \log \eps|^{1/2}}
\end{align}
for some constant $C$.
Summing over $m$ this will complete the derivation of \eqref{eq: decay} for $\delta$ small enough.

To estimate the case $m= 1$, we simply  used the fact that $|x_1 - x_2| \geq \eps  | \log \eps|$.
Suppose that $x_2'$ is such that $|x_1 - x_2'| \geq \eps  | \log \eps|$.
Then integrating with respect to  $z_1'$ leads to 
\begin{align*}
\eps^{m(1-d)} \int dZ'_m  &  \indc_{\{ |x_1 - x_2'| \geq \eps  | \log \eps| \}} \,   |\zeta(z_1,z'_1)\zeta(z'_1,z'_2)\cdots \zeta(z'_m,z_2)|
e^{ - \frac{\beta}{4} \left(v_1^2 + v_2^2 + \sum_{i=1}^m |v'_i|^2\right)} \\
& \leq 
\frac{C}{| \log \eps|^{1/2}}
\eps^{(m-1)(1-d)} \int dz'_2 \dots dz'_m  |\zeta(z'_2,z'_3)\cdots \zeta(z'_m,z_2)| e^{ - \frac{\beta}{8} \left(v_2^2 + \sum_{i=2}^m |v'_i|^2\right)}\;,
\end{align*}
where we applied an estimate as \eqref{eq: m=1} using part of the exponential factor, and removed the constraint $\indc$ in the upper bound. 
Finally, we can integrate term by term as the constraint on $z'_i$ depends only on $z'_{i+1}$. This leads to a contribution of the form $C\eps^{d-1} \delta ( |v'_i| + |v'_{i+1}|)$ for each constraint. After integrating the velocities, we obtain an upper bound $C^{m-1} \delta^{m-1} \eps^{(m-1)(d-1)} (1+|v_2|)e^{ - \frac{\beta}{8} v_2^2 }$ which implies an estimate as in \eqref{eq: decay m}.

It remains to consider  the set $\{  |x_1 - x_2'| \leq \eps  | \log \eps| \}$. We first integrate over $z'_2$
\begin{align*}
\eps^{m(1-d)} \int dZ'_m  &  \indc_{\{  |x_1 - x_2'| \leq \eps  | \log \eps| \}} \,   |\zeta(z_1,z'_1)\zeta(z'_1,z'_2)\cdots \zeta(z'_m,z_2)|
e^{ - \frac{\beta}{4} \left(v_1^2 + v_2^2 + \sum_{i=1}^m |v'_i|^2\right)} \\
& \leq 
\eps^{m (1-d)} C\eps^d  | \log \eps|^{d} e^{ - \frac{\beta}{4} \left(v_1^2 + v_2^2 \right)}
\int dz'_1  |\zeta(z_1,z'_1)| e^{ - \frac{\beta}{4} |v'_1|^2} \\
& \qquad \qquad \times 
\int dz'_3 \dots dz'_m  |\zeta(z'_3,z'_4)\cdots \zeta(z'_m,z_2)| e^{ - \frac{\beta}{4} \sum_{i=3}^m |v'_i|^2} .
\end{align*}
This breaks the cluster into two independent parts which can be estimated separately by the product of the volume of the cylinders,
leading to a higher order contribution $\eps  | \log \eps|^{d} C^m \delta^{m-1}$.
This completes the derivation of \eqref{eq: decay m} and the proof of \eqref{eq: factorisation hors cC} for $j=2$.

\end{proof}

The statement for $j=1$ is also similar and follows from the cluster expansion of \cite{Poghosyan Ueltschi}. In fact
Theorem 2.2 and Proposition~6.1 in \cite{Poghosyan Ueltschi} imply that $f_{\eps,0}^{(1)}$ is uniformly bounded 
by a geometric series for $\delta$ small.

Note that, in particular, the scaling condition \eqref{eq: Boltzmann Grad} holds, with $\kappa$ uniformly bounded in $\delta$.
Indeed, since there exists a (nontrivial) measurable nonnegative $f^{(1)}$ such that $ f_{\eps,0}^{(1)} \to f^{(1)} $ as $\e \to 0$,
it follows that 
$$\e^{d-1} {\mathbb E}_{\mu_\eps} \big( \cN \big) = \e^{d-1} \int \rho_{\eps,0}^{(1)}(z) dz = \mu_{\eps}^{-1}\int \rho_{\eps,0}^{(1)}(z) dz
= \int f_{\eps,0}^{(1)}(z) dz \to \kappa$$ where $\displaystyle\kappa := \int f^{(1)}(z) dz$.

\bigskip

The case $j>2$ can be treated similarly, however the expressions are more lengthy and we refer to \cite{Ue04} for details.

\section{Concluding remarks}

\subsection{Some wrong ideas about irreversibility}

The previous analysis  brings  a more precise understanding of Loschmidt's paradox~: it indicates where the   irreversibility of the Boltzmann description appears in the limiting process.

We would like first to comment upon some of the possible explanations which can be found in the literature.

\begin{itemize}

\item The direction of time in the Boltzmann dynamics is {\bf not related to an arbitrary choice 
 in writing the collision operator}. Once the initial data is prescribed, one has no choice in expressing the collision operator in terms of pre-collisional configurations for positive times, and in terms of post-collisional configurations for negative times. As explained in Remark \ref{collision-rmk}, this is the only way to define properly the traces by using the transport operator. This is also related to the fact that only the distribution of ingoing configurations has to be prescribed for the transport equation (see Remark \ref{boundary-rmk}).

\item  Irreversibility is not {\bf a direct consequence of chaos}. One can indeed start from a non chaotic initial data, in which case the Boltzmann hierarchy does not decouple.
However, even in this case, we have seen in Section \ref{irreversibility-section} that the limiting evolution is irreversible. We indeed have a Lyapunov functional, obtained by linear superposition of the entropy functionals with the Hewitt-Savage measure, which is strictly decreasing. 

\item 
Irreversibility is not due to {\bf neglecting the interaction length in the collision process}. In the limit, we forget indeed about the relative (microscopic) positions of the particles at the time of collisions, but this information could be kept by introducing an intermediate description, i.e. a simple modification of the Boltzmann equation referred to as the Enskog hierarchy \cite{PS}. In this equation the collision operator is still of type (2.3). However, Arkeryd and Cercignani \cite{AS} (see also~\cite{Bel}) prove that the Enskog equation (and thus the Enskog hierarchy using the previous superposition principle) is irreversible.

\end{itemize}

\subsection{A very singular averaging process}
Neglecting  spatial micro-translations in the limit induces   a first loss of information. 
The second loss of information, which is actually responsible for the loss of reversibility, consists in neglecting pathological configurations, i.e. configurations leading to pseudo-trajectories involving recollisions.
These sets~$\cB_{\eps_0}^\pm$ defined in~(\ref{eq: bad set})
  have a simple geometric definition, and their measure converges to 0 in the limit. So apparently it seems rather natural not to care about them.

The point is that the marginals at time $t$ can be computed as weighted averages of the initial marginals on very singular sets, which have exactly the same structure and the same measure. 
Recollisions of the backward dynamics are indeed exactly collisions of the forward pseudo-dynamics. We have therefore identified very precisely why time-concatenation is possible while reversing the arrow of time is not.
This can be summarized as in Figure~\ref{figuresingularsets}.

\begin{figure}[h]
\centering
\scalebox{0.4}{\includegraphics{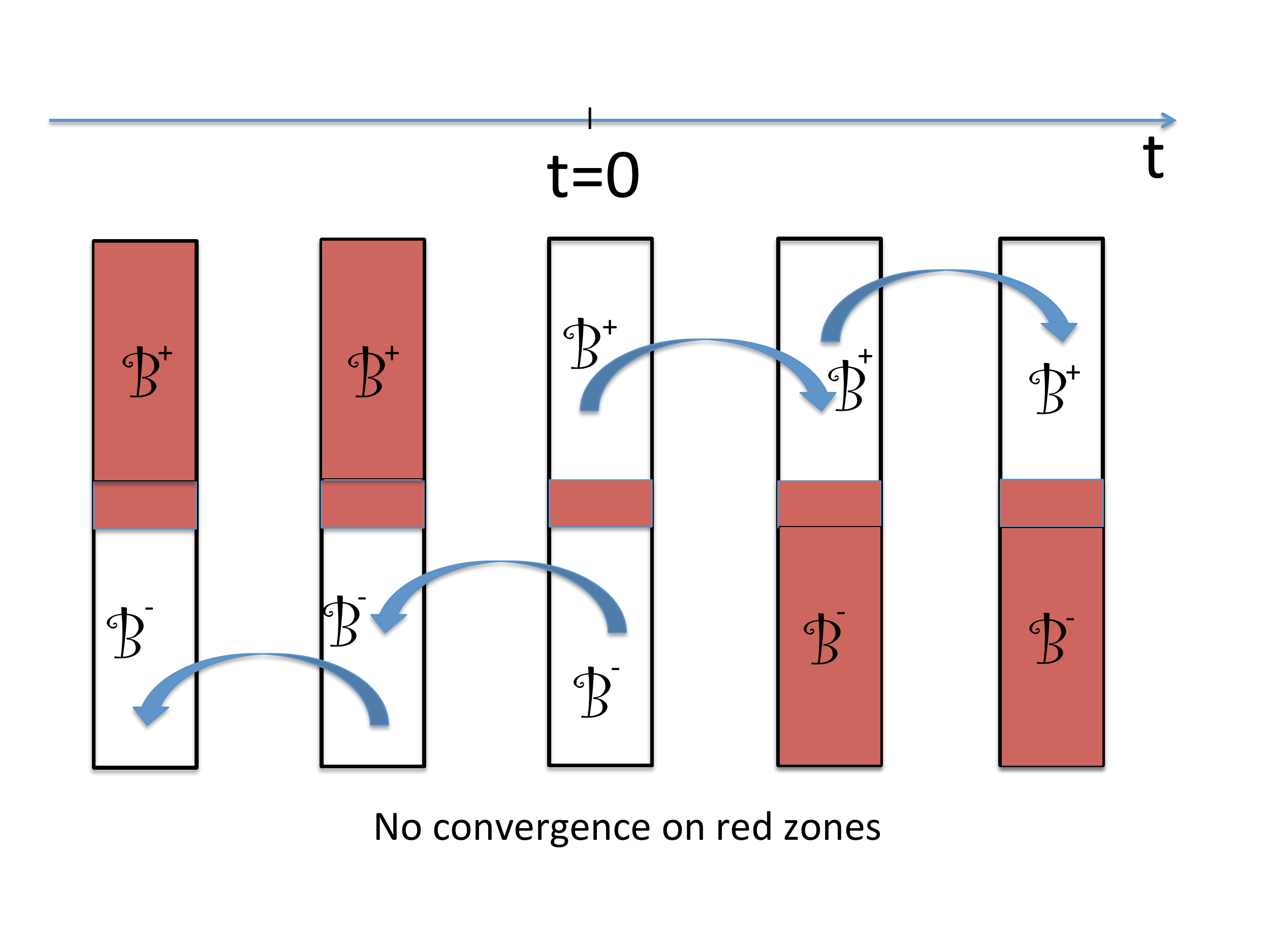}}
\caption{Convergence and lack of convergence over singular sets}
\label{figuresingularsets}
\end{figure}

Note that, 
for a better understanding of the Boltzmann dynamics,
it is not enough to look at the specific initial data (\ref{eq: initial data 2}), 
as its particular form is not stable under the dynamics. 
We would need a more systematic classification of the limiting dynamics 
depending on the microscopic structure of the $n$-particle distribution.

\end{document}